\newtheorem{theorem}{Theorem}
\newtheorem{corollary}[theorem]{Corollary}
\newtheorem{definition}[theorem]{Definition}
\newtheorem{example}[theorem]{Example}
\newtheorem{lemma}[theorem]{Lemma}
\newtheorem{proposition}[theorem]{Proposition}
\newenvironment{proof}[1][Proof]{\textbf{#1.} }{\ \rule{0.5em}{0.5em}}
\begin{document}

\title{The adjacency matroid of a graph}
\author{Robert Brijder\\Hasselt University and Transnational University of Limburg\\Belgium
\and Hendrik\ Jan Hoogeboom\\LIACS, Leiden University\\The Netherlands
\and Lorenzo Traldi\\Lafayette College\\Easton, Pennsylvania 18042 USA}
\date{}
\maketitle

\begin{abstract}
If $G$ is a looped graph, then its adjacency matrix represents a binary
matroid $M_{A}(G)$\ on $V(G)$. $M_{A}(G)$ may be obtained from the
delta-matroid represented by the adjacency matrix of $G$, but $M_{A}(G)$ is
less sensitive to the structure of $G$. Jaeger proved that every binary
matroid is $M_{A}(G)$ for some $G$ [\emph{Ann. Discrete Math.} \textbf{17
}(1983), 371-376].

The relationship between the matroidal structure of $M_{A}(G)$ and the
graphical structure of $G$ has many interesting features. For instance, the
matroid minors $M_{A}(G)-v$ and $M_{A}(G)/v$ are both of the form
$M_{A}(G^{\prime}-v)$ where $G^{\prime}$ may be obtained from $G$ using local
complementation. In addition, matroidal considerations lead to a principal
vertex tripartition, distinct from the principal edge tripartition of
Rosenstiehl and Read [\emph{Ann. Discrete Math.} \textbf{3 }(1978), 195-226].
Several of these results are given two very different proofs, the first
involving linear algebra and the second involving set systems or $\Delta
$-matroids. Also, the Tutte polynomials of the adjacency matroids of $G$ and
its full subgraphs are closely connected to the interlace polynomial of
Arratia, Bollob\'{a}s and Sorkin [\textit{Combinatorica} \textbf{24} (2004), 567-584].

\bigskip

Keywords. adjacency, delta-matroid, interlace polynomial, local complement,
matroid, minor, Tutte polynomial

\bigskip

Mathematics Subject\ Classification. 05C50

\end{abstract}

\section{Introduction}

A distinctive feature of matroid theory is that there are so many equivalent
ways to define matroids, each providing its own special insight into the
nature of the structure being defined. We refer to the books of Oxley
\cite{O}, Welsh \cite{We} and White \cite{W1, W2, W} for thorough discussions.
Here is one way to define a particular kind of matroid:

\begin{definition}
\label{binmat}A \emph{binary matroid} $M$ is an ordered pair $(V,\mathcal{C}%
(M))$, which satisfies the following \emph{circuit axioms}:

1. $V$ is a finite set and $\mathcal{C}(M)\subseteq2^{V}$.

2. $\varnothing\notin\mathcal{C}(M)$.

3. If $C_{1},C_{2}\in\mathcal{C}(M)$ then $C_{1}\not \subseteq C_{2}$.

4. If $C_{1}\neq C_{2}\in\mathcal{C}(M)$ then the symmetric difference
$(C_{1}\backslash C_{2})\cup(C_{2}\backslash C_{1})$ $=$ $C_{1}\Delta C_{2}$
contains at least one $C\in\mathcal{C}(M)$.
\end{definition}

If $M$ and $M^{\prime}$ are matroids on $V$ and $V^{\prime}$ then $M\cong
M^{\prime}$ if there is a bijection between $V$ and $V^{\prime}$ under which
$\mathcal{C}(M)$ and $\mathcal{C}(M^{\prime})$ correspond.

We consider $2^{V}$ as a vector space over $GF(2)$ in the usual way: if
$S,S_{1},S_{2}\subseteq V$ then $0\cdot S$ $=$ $\varnothing$, $1\cdot S$ $=$
$S$ and $S_{1}+S_{2}$ $=$ $S_{1}\Delta S_{2}$.

\begin{definition}
If $M$ is a binary matroid on $V$ then the \emph{cycle space} $Z(M)$ is the
subspace of $2^{V}$ spanned by $\mathcal{C}(M)$.
\end{definition}

The importance of the cycle space of a binary matroid is reflected in the well
known fact that two fundamental ideas of matroid theory, nullity and duality,
correspond under $Z$ to two fundamental ideas of linear algebra, dimension and
orthogonality: $\dim Z(M)$ is the nullity of $M$, and if $M^{\ast}$ is the
dual of $M$ then $Z(M^{\ast})$ is the orthogonal complement of $Z(M)$. (See
\cite{O, We, W2} for details.) Ghouila-Houri \cite{GH} showed that the
importance of the cycle space is reflected in another special property,
mentioned by some authors \cite{Bu4, J2, W2} but not stated explicitly in most
accounts of the theory.

\begin{theorem}
\label{bin} \cite{GH} Let $V$ be a finite set. Then the function
\[
Z:\{\text{binary matroids on }V\}\text{ }\rightarrow\text{ }%
\{GF(2)\text{-subspaces of }2^{V}\}
\]
is bijective.
\end{theorem}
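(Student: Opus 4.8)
The plan is to exhibit an explicit two-sided inverse for $Z$. Given a $GF(2)$-subspace $W\subseteq2^{V}$, let $\Phi(W)$ be the set of \emph{minimal} nonempty members of $W$ (minimal under $\subseteq$). I would show that $\Phi(W)$ is the circuit set of a binary matroid, that $Z(\Phi(W))=W$, and that $\Phi(Z(M))=M$ for every binary matroid $M$; the first two facts give surjectivity of $Z$ together with a right inverse, and the third gives injectivity.

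First I would check that $\Phi(W)$ satisfies the four circuit axioms. Axioms 1 and 2 are immediate, and axiom 3 holds because a set of minimal elements is automatically an antichain. For axiom 4, if $C_{1}\neq C_{2}\in\Phi(W)$ then $C_{1}\Delta C_{2}=C_{1}+C_{2}$ lies in $W$ since $W$ is a subspace, and it is nonempty because $C_{1}\neq C_{2}$; hence it contains a minimal nonempty member of $W$, i.e.\ an element of $\Phi(W)$. Next I would verify $Z(\Phi(W))=W$. The inclusion $\subseteq$ is clear since $\Phi(W)\subseteq W$. For $\supseteq$, take a nonempty $S\in W$; by finiteness $S$ contains a minimal nonempty subset that lies in $W$, and such a set is a minimal nonempty member of $W$, hence an element $C\in\Phi(W)$ with $C\subseteq S$. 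Then $S\Delta C=S\setminus C\in W$ has strictly smaller size, so by induction on $|S|$ we may peel off elements of $\Phi(W)$ until nothing remains, expressing $S$ as a symmetric difference of members of $\Phi(W)$. Thus $W\subseteq Z(\Phi(W))$, and $Z\circ\Phi=\mathrm{id}$, so $Z$ is surjective.

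Injectivity amounts to $\Phi\circ Z=\mathrm{id}$, i.e.\ the Lemma that for every binary matroid $M$ the circuit set $\mathcal{C}(M)$ coincides with the minimal nonempty members of $Z(M)$. I would reduce both inclusions to the single assertion: every nonempty $S\in Z(M)=\operatorname{span}(\mathcal{C}(M))$ contains some circuit. Granting this, a minimal nonempty member $T$ of $Z(M)$ contains a circuit $C\subseteq T$, and minimality forces $T=C\in\mathcal{C}(M)$; conversely, if a circuit $C$ properly contained a nonempty member of $Z(M)$, that member would contain a circuit $C'\subsetneq C$, contradicting the antichain axiom~3. Hence the two sets agree and $M$ is recovered from $Z(M)$.

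The main obstacle is precisely this assertion that every nonempty cycle-space element contains a circuit, which is where axiom 4 does the essential work. The tool is that axiom 4 implies the usual weak elimination: for $e\in C_{1}\cap C_{2}$ with $C_{1}\neq C_{2}$, the set $C_{1}\Delta C_{2}\subseteq(C_{1}\cup C_{2})\setminus\{e\}$ contains a circuit. I would argue by taking a representation $S=C_{1}\Delta\cdots\Delta C_{m}$ as a symmetric difference of circuits with $m$ as small as possible; if no $C_{i}$ were contained in $S$, then some element $y\notin S$ would be covered by the $C_{i}$ an even (hence positive) number of times, putting $y\in C_{i}\cap C_{j}$, and applying axiom 4 to this pair yields a shorter representation or a smaller nonempty element of $Z(M)$, contradicting minimality. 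It is worth emphasizing that axiom 4 is exactly the hypothesis that rules out counterexamples: for the antichain $\{1,2,3\},\{1,2,4\}$ one has $\{1,2,3\}\Delta\{1,2,4\}=\{3,4\}$, a smaller member of the span that is not itself in the antichain, so without axiom 4 the minimal vectors of the span need not be the given circuits.
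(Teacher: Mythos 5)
Your overall architecture matches the paper's proof exactly: injectivity comes from recovering $\mathcal{C}(M)$ as the minimal nonempty members of $Z(M)$, and surjectivity comes from showing that the minimal nonempty members of a subspace $W$ satisfy the circuit axioms and span $W$. The surjectivity half of your argument is complete and correct, including the verification of axiom 4 for $\Phi(W)$ and the peeling-off induction showing $Z(\Phi(W))=W$. You have also correctly isolated the crux of the injectivity half: the assertion that every nonempty element of $Z(M)$ contains a circuit.

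The gap is in your proof of that crux. You take $S=C_{1}\Delta\cdots\Delta C_{m}$ with $m$ minimal, find $y\notin S$ with $y\in C_{i}\cap C_{j}$, and claim that axiom 4 "yields a shorter representation or a smaller nonempty element of $Z(M)$, contradicting minimality." If $C_{i}=C_{j}$ you can indeed drop both and shorten the representation; but if $C_{i}\neq C_{j}$, axiom 4 only hands you a circuit $C'\subseteq C_{i}\Delta C_{j}$, and in general $C'\neq C_{i}\Delta C_{j}$, so you cannot substitute $C'$ for the pair $C_{i},C_{j}$ without changing the symmetric difference. The leftover $(C_{i}\Delta C_{j})\setminus C'$ is indeed a "smaller nonempty element of $Z(M)$" (smaller than $C_{i}\Delta C_{j}$), but that contradicts nothing: your minimality is over the number of circuits representing the \emph{fixed} set $S$, and no quantity you are inducting on decreases. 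Attempts to repair this by expressing the leftover as a sum of circuits are circular, since that is precisely the statement being proved. This is not a cosmetic issue: the passage from axiom 4 ("$C_{1}\Delta C_{2}$ contains a circuit") to the statement you need (equivalently, to the strengthened axiom $4'$ that every non-trivial sum of circuits is a \emph{disjoint union} of circuits) is a genuine theorem in the standard characterizations of binary matroids, and its usual proofs use more than elementary circuit juggling. The paper sidesteps exactly this point by invoking axiom $4'$ and citing the standard references for its equivalence with axiom 4; your proof would be complete if you did the same, or if you supplied an actual proof of that equivalence.
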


Theorem \ref{bin} tells us that any construction or function which assigns a
subspace of $2^{V}$ to some object may be unambiguously reinterpreted as
assigning a binary matroid to that object. There are of course many notions of
linear algebra that involve assigning subspaces to objects. For instance, an
$m\times n$ matrix $A$ with entries in $GF(2)$ has four associated subspaces:
the row space and right nullspace are orthogonal complements in $GF(2)^{n}$,
and the column space and left nullspace are orthogonal complements in
$GF(2)^{m}$. According to Theorem \ref{bin}, we could just as easily say that
an $m\times n$ matrix with entries in $GF(2)$ has four associated binary
matroids, a pair of duals on an $m$-element set, and a pair of duals on an
$n$-element set. For a symmetric matrix the row space and the column space are
the same, and the left and right nullspace are the same.

Let $G$ be a graph. A familiar construction associates to $G$ its
\emph{polygon matroid} $M(G)$, the binary matroid on $E(G)$ whose circuits are
the minimal edge-sets of circuits of $G$. In this paper we discuss a different
way to associate a binary matroid to $G$, which was mentioned by Jaeger in
1983 \cite{J1, J2}; the notion seems to have received little attention in the
intervening decades.

\begin{definition}
\label{mat}Let $G$ be a graph, and let $\mathcal{A}(G)$ be the Boolean
adjacency matrix of $G$, i.e., the $V(G)\times V(G)$ matrix with entries in
$GF(2)$, in which a diagonal entry $a_{vv}$ is 1 if and only if $v$ is looped
and an off-diagonal entry $a_{vw}$ is 1 if and only if $v\neq w$ are adjacent.
Then the \emph{adjacency matroid} $M_{A}(G)$ is the binary matroid on $V$ $=$
$V(G)$ represented by $\mathcal{A}(G)$, i.e., its circuits are the minimal
nonempty subsets $S\subseteq V$ such that the columns of $\mathcal{A}(G)$
corresponding to elements of $S$ are linearly dependent.
\end{definition}

Here are three comments on Definition \ref{mat}:

1. We understand the term \emph{graph} to include multigraphs; that is, we
allow graphs to have loops and parallel edges. Although Definition \ref{mat}
applies to an arbitrary graph $G$, $\mathcal{A}(G)$ does not reflect the
number of edges connecting two adjacent vertices, or the number of loops on a
looped vertex. Consequently, the reader may prefer to think of $\mathcal{A}%
(G)$ and $M_{A}(G)$ as defined only when $G$ is a looped simple graph.

2. In light of Theorem \ref{bin}, $M_{A}(G)$ may be described more simply as
the binary matroid whose cycle space $Z(M_{A}(G))$ is the nullspace of
$\mathcal{A}(G)$.

3. Many graph-theoretic properties of a graph $G$ do not match conveniently
with matroid-theoretic properties of $M_{A}(G)$. For example, recall that a
\emph{loop} in a matroid $M$ is an element $\lambda$ such that $\{\lambda
\}\in\mathcal{C}(M)$, and a \emph{coloop} is an element $\kappa$ such that
$\kappa\notin\gamma$ for all $\gamma\in\mathcal{C}(M)$. In the polygon matroid
of $G$, a loop is an edge incident on only one vertex, and a coloop is a cut
edge. In the adjacency matroid of $G$ we have the following very different
results; (i) tells us that looped vertices of $G$ cannot be loops of
$M_{A}(G)$, and (ii) tells us that coloops of $M_{A}(G)$ cannot in general
have anything to do with connectedness of $G$. \noindent(Result (i) follows
immediately from Definition \ref{mat}, but proving (ii) requires a little more
work; it follows readily from Lemma \ref{noncoloop} below.)

(i) A vertex $v\in V(G)$ is a loop of $M_{A}(G)$ if and only if $v$ is
isolated and not looped in $G$.

(ii) Suppose $v\in V(G)$, and let $G^{\prime}$ be the graph obtained from $G$
by toggling the loop status of $v$. Then $v$ is a coloop of at least one of
the adjacency matroids $M_{A}(G)$, $M_{A}(G^{\prime})$.

Theorem \ref{bin} and the equality $Z(M^{\ast})=Z(M)^{\bot}$ directly imply
the following.

\begin{theorem}
\label{iso}Let $G$ and $G^{\prime}$ be two $n$-vertex graphs, let
$f:V(G)\rightarrow V(G^{\prime})$ be a bijection, and let $2^{f}%
:2^{V(G)}\rightarrow2^{V(G^{\prime})}$ be the isomorphism of $GF(2)$-vector
spaces induced by $f$. Then the following three conditions are equivalent.

1. $f$ defines an isomorphism $M_{A}(G)\cong M_{A}(G^{\prime})$.

2. $2^{f}$ maps the column space of $\mathcal{A}(G)$ onto the column space of
$\mathcal{A}(G^{\prime})$.

3. $2^{f}$ maps the nullspace of $\mathcal{A}(G)$ onto the nullspace of
$\mathcal{A}(G^{\prime})$.

\noindent\noindent Also, the following three conditions are equivalent.

1. $f$ defines an isomorphism $M_{A}(G)\cong M_{A}(G^{\prime})^{\ast}$.

2. $2^{f}$ maps the column space of $\mathcal{A}(G)$ onto the nullspace of
$\mathcal{A}(G^{\prime})$.

3. $2^{f}$ maps the nullspace of $\mathcal{A}(G)$ onto the column space of
$\mathcal{A}(G^{\prime})$.
\end{theorem}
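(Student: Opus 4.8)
The plan is to translate every matroid-isomorphism condition into a statement about cycle spaces, where Theorem \ref{bin} and the identity $Z(M^{\ast})=Z(M)^{\bot}$ do all the work. First I would record the two facts that build the bridge. By comment 2 following Definition \ref{mat}, $Z(M_{A}(G))$ is exactly the nullspace of $\mathcal{A}(G)$; and since $\mathcal{A}(G)$ is symmetric, its column space equals its row space, which is the orthogonal complement of its nullspace. Hence the column space of $\mathcal{A}(G)$ equals $Z(M_{A}(G))^{\bot}=Z(M_{A}(G)^{\ast})$. Second, because $f$ is a bijection, $2^{f}$ merely permutes coordinates, so it preserves the standard inner product on $2^{V}$; consequently $2^{f}(W^{\bot})=(2^{f}(W))^{\bot}$ for every subspace $W$.

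Next I would make ``defines an isomorphism'' precise. Transport $M_{A}(G)$ along $f$: let $N$ be the binary matroid on $V(G^{\prime})$ whose circuits are the sets $2^{f}(C)$ with $C\in\mathcal{C}(M_{A}(G))$. Since the circuit axioms of Definition \ref{binmat} involve only inclusion, symmetric difference and nonemptiness, all preserved by $2^{f}$, the object $N$ is indeed a binary matroid, and $f$ defines an isomorphism $M_{A}(G)\cong M_{A}(G^{\prime})$ precisely when $N=M_{A}(G^{\prime})$. Because $2^{f}$ is linear and the cycle space is the span of the circuits, $Z(N)=2^{f}(Z(M_{A}(G)))$. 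Now Theorem \ref{bin} tells us $Z$ is injective, so $N=M_{A}(G^{\prime})$ if and only if $Z(N)=Z(M_{A}(G^{\prime}))$, that is, if and only if $2^{f}$ carries $Z(M_{A}(G))$ onto $Z(M_{A}(G^{\prime}))$. Substituting the nullspace description of $Z$ gives exactly condition 3 of the first list, establishing (1)$\iff$(3).

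Then I would obtain (2)$\iff$(3) from the orthogonality-preserving property of $2^{f}$: it maps the nullspace of $\mathcal{A}(G)$ onto the nullspace of $\mathcal{A}(G^{\prime})$ if and only if it maps their orthogonal complements onto one another, and those complements are precisely the two column spaces. This closes the first block. For the second block I would run the identical argument with $M_{A}(G^{\prime})^{\ast}$ in place of $M_{A}(G^{\prime})$, using $Z(M_{A}(G^{\prime})^{\ast})=Z(M_{A}(G^{\prime}))^{\bot}$, which is the column space of $\mathcal{A}(G^{\prime})$; thus $N=M_{A}(G^{\prime})^{\ast}$ iff $2^{f}$ carries the nullspace of $\mathcal{A}(G)$ onto the column space of $\mathcal{A}(G^{\prime})$, giving (1)$\iff$(3) of the second list, and one further application of orthogonality yields (2).

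I do not expect a genuine obstacle here: once Theorem \ref{bin} and $Z(M^{\ast})=Z(M)^{\bot}$ are invoked, the content is bookkeeping. The one point deserving care is the identity $Z(N)=2^{f}(Z(M_{A}(G)))$---that transporting the matroid and then spanning agrees with spanning and then transporting. It is immediate from linearity of $2^{f}$, but it is the hinge on which the reverse implication turns, since it is what converts the cycle-space equality $Z(N)=Z(M_{A}(G^{\prime}))$ delivered by injectivity of $Z$ into the desired statement that $2^{f}$ carries one nullspace onto the other.
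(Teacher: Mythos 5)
Your proof is correct and follows exactly the route the paper intends: the paper offers no written proof beyond the remark that the theorem is ``directly implied'' by Theorem \ref{bin} and the identity $Z(M^{\ast})=Z(M)^{\bot}$, and your argument is precisely the careful expansion of that remark (cycle space $=$ nullspace, column space $=$ orthogonal complement via symmetry of $\mathcal{A}(G)$, $2^{f}$ preserves orthogonality, and injectivity of $Z$ converts matroid equality into subspace equality). No gaps.
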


Every graph with at least one edge has the same adjacency matroid as
infinitely many other graphs, obtained by adjoining parallels. Even among
looped simple graphs, there are many examples of nonisomorphic graphs with
isomorphic adjacency matroids. For instance, the simple path of length two has
the same adjacency matroid as the graph that consists of two isolated, looped
vertices. However, a looped simple graph is determined up to isomorphism by
the adjacency matroids of its full subgraphs.

\begin{definition}
Let $G$ be a graph, and suppose $S\subseteq V(G)$. Then $G[S]$ denotes the
\emph{full subgraph of} $G$ \emph{induced by }$S$, i.e., the subgraph with
$V(G[S])$ $=$ $S$ that includes the same incident edges as $G$.
\end{definition}

If $v\in V(G)$ then $G[V(G)\backslash\{v\}]$ is also denoted $G-v$.

\begin{theorem}
\label{isomat}Let $G$ and $G^{\prime}$ be looped simple graphs, and let
$f:V(G)\rightarrow V(G^{\prime})$ be a bijection.\ Then the following are equivalent.

1. \ $f$ is an isomorphism of graphs.

2. For every $S\subseteq V(G)$, $f$ defines an isomorphism $M_{A}(G[S])\cong
M_{A}(G^{\prime}[f(S)])\,$ of matroids.

3. For every $S\subseteq V(G)$ with $\left\vert S\right\vert \leq2$, the
matroids $M_{A}(G[S])$ and $M_{A}(G^{\prime}[f(S)])\,$\ have the same nullity.
\end{theorem}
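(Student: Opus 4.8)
The plan is to prove the theorem by establishing the cycle of implications $1 \Rightarrow 2 \Rightarrow 3 \Rightarrow 1$. The first two implications are routine, and essentially all of the content lies in the last one, $3 \Rightarrow 1$.

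For $1 \Rightarrow 2$ I would argue that if $f$ is a graph isomorphism then for each $S \subseteq V(G)$ the restriction $f|_S$ is an isomorphism $G[S] \cong G'[f(S)]$, so the induced map $2^{f|_S}$ carries $\mathcal{A}(G[S])$ to $\mathcal{A}(G'[f(S)])$ by a simultaneous permutation of rows and columns. In particular $2^{f|_S}$ maps the column space of $\mathcal{A}(G[S])$ onto that of $\mathcal{A}(G'[f(S)])$, so by Theorem \ref{iso} (first group of equivalent conditions) $f|_S$ defines a matroid isomorphism $M_A(G[S]) \cong M_A(G'[f(S)])$. The implication $2 \Rightarrow 3$ is immediate, since isomorphic matroids have the same nullity and condition 3 merely restricts attention to $\left\vert S\right\vert \leq 2$.

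The main step is $3 \Rightarrow 1$. Since $G$ and $G'$ are looped simple graphs, $f$ is a graph isomorphism precisely when it preserves both loop status and adjacency, so I would extract these two facts from the nullity data for small $S$ by inspecting the relevant principal submatrices of $\mathcal{A}(G)$ and $\mathcal{A}(G')$, which for $\left\vert S\right\vert \leq 2$ are at most $2 \times 2$. For $S = \{v\}$ the matrix $\mathcal{A}(G[S])$ is $[a_{vv}]$, whose nullity is $1$ if $v$ is unlooped and $0$ if $v$ is looped; hence the $\left\vert S\right\vert = 1$ instances of condition 3 force $v$ to be looped if and only if $f(v)$ is looped, so $f$ preserves loops. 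For $S = \{v,w\}$ the matrix is the symmetric $2 \times 2$ matrix with diagonal entries $a_{vv}, a_{ww}$ and off-diagonal entry $a_{vw}$, and a short case analysis shows that once the loop statuses $a_{vv}$ and $a_{ww}$ are fixed, the nullity distinguishes adjacency from nonadjacency. (Explicitly, recording the nullity in the nonadjacent and adjacent cases respectively: both vertices unlooped gives $2$ and $0$; exactly one looped gives $1$ and $0$; both looped gives $0$ and $1$.) Combining this with the loop preservation already established, equality of the $\left\vert S\right\vert = 2$ nullities forces $v$ and $w$ to be adjacent in $G$ if and only if $f(v)$ and $f(w)$ are adjacent in $G'$. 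Thus $f$ preserves loops and adjacency and is a graph isomorphism.

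The hard part is the $3 \Rightarrow 1$ argument, and within it the crucial point is that a single numerical invariant, the nullity of a size-$2$ adjacency matroid, recovers adjacency once loop status is known: in every loop configuration, toggling the edge $vw$ changes the rank of the $2 \times 2$ principal submatrix, so no two of the relevant matrices share a nullity. The economy of condition 3, which needs only subsets of size at most $2$, is exactly what makes this case analysis finite and elementary, and I expect the only real care required is in keeping the three loop configurations straight.
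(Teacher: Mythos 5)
Your proposal is correct and follows essentially the same route as the paper: the implications $1\Rightarrow2\Rightarrow3$ are dismissed as routine, and $3\Rightarrow1$ is established by reading off loop status from the nullity of the $1\times1$ principal submatrices and adjacency from the nullity of the $2\times2$ ones, with your case-by-case nullity values matching the paper's exactly. The only difference is that you spell out the $2\times2$ determinant computations that the paper leaves implicit.
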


\begin{proof}
The implications $1\Rightarrow2\Rightarrow3$ are obvious. The implication
$3\Rightarrow1$ follows from these facts: A vertex $v\in V(G)$ is looped
(resp. unlooped) if and only if the nullity of $M_{A}(G[\{v\}])$ is 0 (resp.
1). If $v\neq w\in V(G)$ are both unlooped, then they are adjacent (resp.
nonadjacent) in $G$ if and only if the nullity of $M_{A}(G[\{v,w\}])$ is 0
(resp. 2). If $v\in V(G)$ is looped and $w\in V(G)$ is unlooped, then they are
adjacent (resp. nonadjacent) in $G$ if and only if the nullity of
$M_{A}(G[\{v,w\}])$ is 0 (resp. 1). If $v\neq w\in V(G)$ are both looped, then
they are adjacent (resp. nonadjacent) in $G$ if and only if the nullity of
$M_{A}(G[\{v,w\}])$ is 1 (resp. 0).
\end{proof}

The polygon matroids of graphs constitute a special subclass of the binary
matroids. Jaeger proved that the adjacency matroids of graphs, instead,
include all the binary matroids:

\begin{theorem}
\label{Jae} \cite{J1} Let $M$ be an arbitrary binary matroid. Then there is a
graph $G$ with $M$ $=$ $M_{A}(G)$.
\end{theorem}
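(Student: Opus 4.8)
The plan is to translate the statement entirely into linear algebra using Theorem \ref{bin} and comment 2 following Definition \ref{mat}. By Theorem \ref{bin} the matroid $M$ on $V$ (with $\left\vert V\right\vert =n$) is completely determined by its cycle space $W=Z(M)$, a $GF(2)$-subspace of $2^{V}\cong GF(2)^{n}$; and comment 2 tells us that a graph $G$ on $V$ satisfies $M_{A}(G)=M$ precisely when the nullspace of the symmetric matrix $\mathcal{A}(G)$ equals $W$. Since every symmetric matrix over $GF(2)$ is the Boolean adjacency matrix of a unique looped simple graph (its diagonal recording loops, its off-diagonal entries recording edges), the theorem reduces to the purely linear-algebraic claim that \emph{every subspace $W\subseteq GF(2)^{n}$ is the nullspace of some symmetric $n\times n$ matrix over }$GF(2)$.

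To produce such a matrix I would first choose an $r\times n$ matrix $B$, where $r=\dim W^{\bot}=n-\dim W$, whose rows form a basis of the orthogonal complement $W^{\bot}$. Because the standard bilinear form on $GF(2)^{n}$ is nondegenerate we have $(W^{\bot})^{\bot}=W$, so the nullspace of $B$ is exactly $W$. I would then set $A=B^{T}B$. This matrix is symmetric by construction, and $\ker A\supseteq\ker B=W$ is immediate, so the only thing left is to rule out that the kernel is strictly larger.

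The step I expect to be the main obstacle — or at least the main trap — is the equality $\ker(B^{T}B)=\ker B$. Over $\mathbb{R}$ this is the familiar fact proved by the inner-product computation $x^{T}B^{T}Bx=\lVert Bx\rVert^{2}$, but that argument collapses over $GF(2)$, where $x^{T}B^{T}Bx$ reduces to the sum of the coordinates of $Bx$ and can vanish while $Bx\neq0$; indeed for a $B$ with dependent rows the identity simply fails. The way around this is to avoid the quadratic form altogether and argue by injectivity: the columns of $B^{T}$ are the rows of $B$, which are linearly independent, so $B^{T}$ represents an injective map $GF(2)^{r}\rightarrow GF(2)^{n}$. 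Hence if $B^{T}Bx=0$ then $Bx\in\ker B^{T}=\{0\}$, forcing $x\in\ker B$. This is exactly where the hypothesis that the rows of $B$ form a basis (not merely a spanning set) is used, and it gives $\ker A=W$.

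Finally I would assemble the pieces: let $G$ be the looped simple graph whose Boolean adjacency matrix is the symmetric matrix $A=B^{T}B$. By comment 2 the cycle space $Z(M_{A}(G))$ equals $\ker A=W=Z(M)$, and by the bijectivity asserted in Theorem \ref{bin} equal cycle spaces force $M_{A}(G)=M$, completing the proof. As a sanity check I would verify the degenerate extremes: $W=\{0\}$ yields $r=n$ and an all-looped, edgeless graph, while $W=GF(2)^{n}$ yields $r=0$, the empty matrix, and the loopless edgeless graph, both of which behave as the construction predicts.
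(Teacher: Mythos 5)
Your proposal is correct and is essentially the paper's own argument: the paper also reduces to finding a symmetric matrix with prescribed nullspace and builds it as $(C^{\prime})^{tr}C^{\prime}$ for a full-row-rank matrix $C^{\prime}=\begin{pmatrix}I_{r} & C^{\prime\prime}\end{pmatrix}$ representing the matroid, which is exactly your $B^{T}B$. The only cosmetic difference is in verifying $\ker(B^{T}B)=\ker B$: you use injectivity of $B^{T}$ (correctly sidestepping the failure of the quadratic-form argument over $GF(2)$), while the paper arranges an identity block via column permutation so that the top $r$ rows of the symmetric matrix are literally $C^{\prime}$.
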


Jaeger also gave an original characterization of the polygon matroids of graphs.

\begin{theorem}
\label{Jae2} \cite{J2} The polygon matroids of graphs are the duals of the
adjacency matroids of looped circle graphs.
\end{theorem}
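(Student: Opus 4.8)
My plan is to translate the statement into the language of cycle spaces, where both sides become subspaces of $2^{E}$ for a common ground set $E$, and then to exhibit the required subspaces through the interlacement matrix of a chord diagram. By Theorem \ref{bin} and the identity $Z(M^{\ast})=Z(M)^{\bot}$, together with comment 2 following Definition \ref{mat} (which identifies $Z(M_{A}(H))$ with the nullspace of $\mathcal{A}(H)$) and the symmetry of $\mathcal{A}(H)$ (so that its nullspace and column space are orthogonal complements), the assertion that $M(G)\cong M_{A}(H)^{\ast}$ under a bijection $E(G)\rightarrow V(H)$ is equivalent to the equality of the cycle space $Z(M(G))$ of $G$ with the column space of $\mathcal{A}(H)$. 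Thus the theorem reduces to a purely linear-algebraic claim: the subspaces of $2^{E}$ arising as cycle spaces of graphs with edge set $E$ are exactly those arising as column spaces of adjacency matrices of looped circle graphs with vertex set $E$. I would first reduce to the connected case, since the polygon matroid of a graph is the direct sum of those of its connected components, disjoint unions of chord diagrams yield block-diagonal adjacency matrices and disjoint unions of circle graphs, and both column space and matroid duality respect direct sums.

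\textbf{The chord-diagram correspondence.} Next I would set up the correspondence carrying a connected graph to a looped circle graph. Given connected $G$, double every edge to obtain an Eulerian multigraph and fix an Euler circuit; reading the circuit records each edge of $G$ twice, producing a double occurrence word, i.e.\ a chord diagram $D$ with chord set $E(G)$. Its interlacement graph is a circle graph $H$ on $V(H)=E(G)$, and the transition data (equivalently, the twists of the associated ribbon structure) determine which vertices of $H$ are looped. The heart of the argument is the following identity, which I would isolate as a lemma: for a suitable choice of Euler circuit and twists, the column space of $\mathcal{A}(H)$ equals the cycle space $Z(M(G))$. This is exactly the interlacement (or trip-matrix) computation underlying the work of Rosenstiehl and Read, and is closely related to the cycle-counting identity of Cohn and Lempel; as a numerical check it forces $\operatorname{rank}\mathcal{A}(H)=|E(G)|-|V(G)|+1$, which is the nullity of $M(G)$, and it reproduces the dictionary seen on small cases, in which bridges and loops of $G$ give unlooped and looped vertices of $H$ respectively.

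\textbf{The two inclusions.} With the lemma in hand both containments follow. For the implication from polygon matroids to duals of adjacency matroids, every graph admits the construction above (any Euler circuit of the doubled graph works; equivalently, every graph has a rotation system and a medial chord diagram), so every polygon matroid is $M_{A}(H)^{\ast}$ for a looped circle graph $H$. For the converse I would start from an arbitrary looped circle graph $H$, realize it as the interlacement graph of a signed chord diagram $D$, and recover a graph $G$ for which $D$ is the double occurrence word of a medial Euler circuit: $D$ encodes a ribbon and rotation structure, and one color class of the canonical two-coloring of its faces furnishes the vertices of $G$ while the chords furnish the edges. The lemma then gives that the column space of $\mathcal{A}(H)$ equals $Z(M(G))$, so $M_{A}(H)^{\ast}=M(G)$ is a polygon matroid. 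Because circle graphs, unlike planar medial diagrams, are defined by a purely combinatorial notion of interlacement, this construction reaches non-planar $G$ as well, matching the fact that all graphic matroids must occur.

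\textbf{Main obstacle.} I expect the real work to be the lemma equating the column space of the interlacement matrix with the cycle space of $G$ over $GF(2)$, with the correct bookkeeping of the diagonal (loop and twist) entries; this is where I would invoke and adapt the interlacement and trip-matrix machinery rather than compute by hand. A secondary difficulty lies in the converse direction: producing from an arbitrary signed chord diagram an honest graph $G$ with the chords as its edges, and verifying that the twist-to-loop dictionary is consistent, so that every looped circle graph, and not merely those coming from planar diagrams, is accounted for.
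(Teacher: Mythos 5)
Your architecture is the right one and essentially matches the paper's: reduce to a statement about subspaces of $2^{E(G)}$, prove a duality lemma identifying the column space of a suitably looped interlacement matrix with $Z(M(G))$, and then argue realizability in both directions (the paper's Theorems \ref{corekerequiv} and \ref{ubiq}). But there are genuine gaps. Most seriously, the ``lemma'' you isolate \emph{is} the theorem: it is Jaeger's result (Theorem \ref{coreker} here) that for a circuit partition $P$ compatible with an Euler system $C$, the nullspace of $\mathcal{I}_{P}(C)$ and the cycle space of the touch-graph $Tch(P)$ are orthogonal complements. The paper proves it via the circuit-nullity formula (Theorem \ref{thmmint}), a rank computation under transition changes (Lemma \ref{ranklem} and Corollary \ref{rankcor}), and the strong principal minor theorem; none of that work appears in your proposal, and you never state the rule deciding which vertices of $H$ receive loops (in the paper: exactly the vertices where the transition of $P$ is orientation-inconsistent with $C$).

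Second, your claim that ``any Euler circuit of the doubled graph works'' is false. Take $G=K_{3}$ with edges $a,b,c$ and the Euler circuit of the doubled graph reading $a\,a\,c\,b\,b\,c$: no two chords are interlaced, so $\mathcal{A}(H)$ is diagonal and its column space is spanned by standard basis vectors, whereas $Z(M(K_{3}))$ is spanned by $(1,1,1)$; no assignment of loops can repair this. The Euler circuit must be \emph{compatible} with the circuit partition determined by the vertices of $G$ (Kotzig's theorem supplies one), which is precisely the hypothesis of Theorem \ref{coreker}. Third, in the converse direction the ``canonical two-coloring of faces'' exists only for checkerboard-colorable (e.g.\ planar) diagrams; for a general looped circle graph the graph $G$ must be taken to be the touch-graph of the circuit partition determined by the loops --- its vertices are \emph{all} the circuits of that partition (equivalently, all boundary components of the twisted ribbon structure), not one color class of a face $2$-coloring. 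With that replacement the converse follows at once from the duality lemma, and there is no need to invert the forward construction.
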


We recall the proofs of Theorems \ref{bin} and \ref{Jae} in Section 2, and in
Sections 3 and 4 we give a new proof of Theorem \ref{Jae2}. This new proof
consists of two parts, which are interesting enough to state separately. (See
Section 3 for definitions.)

\begin{theorem}
\label{corekerequiv}If $P$ is a circuit partition of a 4-regular graph $F$ and
$C$ is an Euler system\ of $F$ that is compatible with $P$, then the polygon
matroid of the touch-graph of $P$ is dual to the adjacency matroid of a
particular looped version of the interlacement graph of $C$.
\end{theorem}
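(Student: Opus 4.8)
The plan is to reduce the duality assertion to a single identity of $GF(2)$-subspaces and then verify that identity combinatorially. First I would observe that the two matroids share a ground set: $M_A(\mathcal{I})$ lives on $V(\mathcal{I})=V(F)$, while the polygon matroid of the touch-graph lives on the edge set of $\mathrm{Tch}(P)$, and the touch-graph has exactly one edge for each vertex of $F$ (the edge joining the two circuits of $P$ that meet at that vertex). Identifying these ground sets by the identity map, and recalling from the remark after Definition \ref{mat} that $Z(M_A(\mathcal{I}))$ is the nullspace of $\mathcal{A}(\mathcal{I})$, that $Z$ of a polygon matroid is the graphic cycle space, and that the cut (cocycle) space is the orthogonal complement of the cycle space, Theorem \ref{bin} together with $Z(M^{\ast})=Z(M)^{\bot}$ shows that $M_A(\mathcal{I})=M(\mathrm{Tch}(P))^{\ast}$ is equivalent to the statement that the nullspace of $\mathcal{A}(\mathcal{I})$ equals the cut space of $\mathrm{Tch}(P)$. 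This is what I would prove.

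Next I would fix notation for the two spaces. Writing $A=A(C)$ for the $GF(2)$ interlacement matrix of $C$ (so $A_{vw}=1$ exactly when $v\neq w$ interlace along $C$, and $A$ has zero diagonal), the ``particular looped version'' is encoded by $\mathcal{A}(\mathcal{I})=A+D$, where $D$ is the diagonal $0/1$ matrix whose $v$-entry records which of the two $C$-incompatible transitions $P$ uses at $v$. The cut space of $\mathrm{Tch}(P)$ is spanned by the elementary cuts $\delta(Q)$, one for each circuit $Q$ of $P$, where $\delta(Q)\in GF(2)^{V(F)}$ has a $1$ in coordinate $v$ precisely when exactly one of the two passages of $P$ at $v$ lies on $Q$; these vectors are the rows of the incidence matrix of the touch-graph. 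I would then establish the two inclusions separately.

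The heart of the argument is the inclusion of the cut space in the nullspace. It suffices to check $\mathcal{A}(\mathcal{I})\,\delta(Q)=0$ for each circuit $Q$. Fixing $Q$ and a vertex $w$, the $w$-coordinate of this product is the parity of the number of vertices $v$ with $\delta(Q)_v=1$ that are interlaced with $w$ along $C$, corrected by the loop term $D_{ww}\,\delta(Q)_w$. Reading $C$ as a double-occurrence sequence, the vertices contributing to $\delta(Q)$ are exactly the chords at which $C$ passes between $Q$ and its complementary union of circuits, and I would show by a parity (Jordan-curve/boundary) argument on this chord diagram, in the spirit of the Cohn--Lempel identity, that the count of such chords interlacing a fixed chord $w$ is always even after the loop correction. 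This parity computation is the step I expect to be the main obstacle, and it is precisely the point at which the rule assigning loops to $\mathcal{I}$ is forced.

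Finally I would match dimensions to obtain the reverse inclusion and hence equality. The cut space has dimension $|P|-c(\mathrm{Tch}(P))$, so it remains to show that the nullity of $\mathcal{A}(\mathcal{I})$ equals $|P|-c(\mathrm{Tch}(P))$, equivalently that the rank of $A+D$ is $n-|P|+c(\mathrm{Tch}(P))$. This is a circuit-nullity formula of Cohn--Lempel type relating the number of circuits of a transition system compatible with $C$ to the nullity of $A+D$; I would either invoke it directly or prove it by induction, changing one transition at a time (a pivot/local complementation) and tracking how the circuit count and the nullity each change by one. Combining this dimension count with the inclusion of the previous paragraph yields that the nullspace of $\mathcal{A}(\mathcal{I})$ coincides with the cut space of $\mathrm{Tch}(P)$, and therefore $M_A(\mathcal{I})=M(\mathrm{Tch}(P))^{\ast}$, as claimed.
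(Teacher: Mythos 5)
Your reduction is correct: by Theorem \ref{bin} and the relation $Z(M^{\ast})=Z(M)^{\bot}$, the asserted duality is exactly the statement that the nullspace of $\mathcal{A}(\mathcal{I}_{P}(C))$ is the cut space of $Tch(P)$, which is Jaeger's Theorem \ref{coreker} restated; and your closing dimension count is exactly the circuit-nullity formula (Theorem \ref{thmmint}), since the cut space has dimension $\left\vert P\right\vert -c(F)$. So your plan is essentially Jaeger's original route: prove one inclusion combinatorially, then finish by comparing dimensions. This is genuinely different from the paper's own proof, which never verifies the orthogonality relation directly. Instead the paper characterizes the independent sets of both matroids by the single rank condition $\rho(\mathcal{I}_{P}(C)-X)=\rho(\mathcal{I}_{P}(C))$: Lemma \ref{ranklem} shows, via the circuit-nullity formula, that deleting one vertex preserves the rank precisely when two distinct circuits of $P$ meet there, i.e., precisely when the corresponding edge of $Tch(P)$ is not a loop; Corollary \ref{rankcor} extends this to arbitrary $X$ by induction, changing one transition at a time; and the strong principal minor theorem (Theorem \ref{spmt}) identifies the sets satisfying the rank condition with the independent sets of $M_{A}(\mathcal{I}_{P}(C))^{\ast}$. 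The payoff of the paper's route is that all the combinatorics is concentrated in the one-vertex statement of Lemma \ref{ranklem}, where the circuit-nullity formula does the work; your route instead front-loads the combinatorics into a global parity identity.

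That identity is also where your proposal has its one real gap. The claim that $\mathcal{A}(\mathcal{I}_{P}(C))\,\delta(Q)=\mathbf{0}$ for every circuit $Q$ of $P$ --- equivalently, that for each vertex $w$ the number of chords $v$ interlaced with $w$ and touched exactly once by $Q$ has the parity dictated by the loop status of $w$ --- is the entire content of Theorem \ref{coreker}, and you assert it rather than prove it. The statement is true, and a chord-diagram argument of the kind you gesture at does work: one tracks, along the two arcs of $C$ determined by the two occurrences of $w$, how often $C$ crosses between edges of $Q$ and edges of its complement, and uses compatibility to relate these crossings to the vertices with $\delta(Q)_{v}=1$. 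But as written this is a plan, not an argument, and it is precisely the delicate step that forces the $\chi$/$\psi$ loop assignment. To complete a proof along your lines you must either carry out that computation in full or cite Jaeger \cite{J3} for Theorem \ref{coreker}, at which point the remainder of your argument (the matroid-theoretic reduction and the rank observation) is already supplied by the paper in the paragraphs surrounding Theorem \ref{corekerequi}.
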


\begin{theorem}
\label{ubiq} Every graph without isolated, unlooped vertices is the
touch-graph of some circuit partition in some 4-regular graph.
\end{theorem}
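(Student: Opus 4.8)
The plan is to prove the statement by a direct construction: given a graph $H$ in which every vertex is incident with at least one edge-end (this is exactly what it means to have no isolated, unlooped vertex, once we agree that a loop contributes two edge-ends at its vertex), I will build a $4$-regular graph $F$ together with a circuit partition $P$ whose touch-graph is isomorphic to $H$. The guiding idea is to let each vertex of $H$ become a circuit of $P$ and each edge of $H$ become a vertex of $F$ at which the two circuits named by the endpoints of that edge meet.

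Concretely, I would set $V(F)=E(H)$, writing $w_{e}$ for the vertex of $F$ associated with an edge $e$ of $H$. At each $w_{e}$ the four half-edges are to be grouped into two transitions, one for each end of $e$; if $e$ is a loop at a vertex $v$, both transitions are assigned to $v$. To pin down the circuits and the edges of $F$, I would fix, at each vertex $v$ of $H$, a cyclic ordering of the edge-ends incident with $v$ (a loop contributing two such ends). This cyclic ordering prescribes a closed walk $C_{v}$ that visits the vertices $w_{e}$ in order, passing through the $v$-transition at each; the edges of $F$ are then precisely the steps of these walks, each edge joining the exit half-edge of one transition to the entry half-edge of the next. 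By construction every half-edge is used once, so each $w_{e}$ has degree $4$ and $F$ is $4$-regular, while the walks $C_{v}$ are pairwise edge-disjoint and cover $E(F)$, so that $P=\{C_{v}:v\in V(H)\}$ is a circuit partition.

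The verification that the touch-graph of $P$ is $H$ is then mostly a matter of reading off the definition: the circuits of $P$ are indexed by $V(H)$, and at each vertex $w_{e}$ of $F$ the two transitions belong to the circuits named by the two ends of $e$, so $w_{e}$ contributes the edge $e$ to the touch-graph, with the correct incidences and with a loop exactly when $e$ is a loop of $H$. I expect the hypothesis to enter at exactly one point: a vertex $v$ incident with no edge-end would force $C_{v}$ to be an empty walk, which is not a legitimate circuit, so the absence of isolated, unlooped vertices is precisely what guarantees that every $C_{v}$ is a nonempty circuit.

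The main points needing care, rather than a genuine obstacle, are the degenerate cases. A vertex $v$ of $H$ with $\deg v=1$ forces $C_{v}$ to be a closed walk of length one, hence a loop of $F$ at the corresponding $w_{e}$, and a loop of $H$ produces a vertex $w_{e}$ both of whose transitions lie on a single circuit, giving a loop of the touch-graph and, when that vertex is isolated and looped in $H$, a two-loop component of $F$. I would therefore state explicitly that $F$ is permitted to be a multigraph with loops, and I would carry out the half-edge and transition bookkeeping separately in these small cases to confirm that $F$ stays $4$-regular and that $P$ faithfully reproduces the self-touchings recorded by the loops of $H$.
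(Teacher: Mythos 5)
Your construction is correct and is essentially the one the paper gives: vertices of $F$ correspond to edges of the given graph, and the circuits of $P$ correspond to its vertices, determined by cyclic orderings of the incident edge-ends. The only difference is cosmetic --- the paper handles loops in a separate second step (adjoining a figure-eight vertex or inserting a looped vertex into an existing circuit), whereas you fold them into the cyclic-ordering bookkeeping by counting a loop as two edge-ends, which yields the same family of pairs $(F,P)$.
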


Theorems \ref{corekerequiv} and \ref{ubiq} are not of only abstract interest.
Touch-graphs of circuit partitions in 4-regular graphs are fairly easy to
understand, and many properties of general adjacency matroids may be motivated
by dualizing properties of touch-graphs. It turns out that the general theory
of adjacency matroids is closely connected to two important notions that also
generalize properties of 4-regular graphs: the $\Delta$-matroids introduced
by\ Bouchet \cite{Bu1, Bu2, Bu3} and the interlace polynomials introduced by
Arratia, Bollob\'{a}s and Sorkin \cite{A1, A2, A}. These close connections are
indicated by the fact that local complementation plays a significant role in
all three theories.

\begin{definition}
Let $G$ be a graph with a vertex $v$. Then the \emph{local complement} $G^{v}$
is the looped simple graph obtained from $G$ by toggling the loop status of
every neighbor of $v$, and toggling the adjacency status of every pair of
neighbors of $v$.
\end{definition}

To be explicit: $G^{v}$ is the looped simple graph related to $G$ as follows:
if $v\neq w\neq x\neq v$ and both $w$ and $x$ are neighbors of $v$ in $G$,
then $G^{v}$ has an edge $wx$\ if and only if $w$ and $x$ are not adjacent in
$G$; if $w\neq x\in V(G)$ and at least one of $w,x$ is not a neighbor of $v$
in $G$, then $G^{v}$ has an edge $wx$\ if and only if $w$ and $x$ are adjacent
in $G$; if $w\neq v$ is a neighbor of $v$ in $G$ then there is a loop on $w$
in $G^{v}$ if and only if $w$ is not looped in $G$; and if $w$ is not a
neighbor of $v$ in $G$ then there is a loop on $w$ in $G^{v}$ if and only if
$w$ is looped in $G$. Note that for every graph $G$, $(G^{v})^{v}$ is the
looped simple graph obtained from $G$ by replacing each set of parallels with
a single edge.

There are two \emph{matroid minor} operations, deletion and contraction.

\begin{definition}
\label{del}If $M$ is a matroid on $V$ and $v\in V$ then the \emph{deletion}
$M-v$ is the matroid on $V\backslash\{v\}$ with $\mathcal{C}(M-v)$ $=$
$\{\gamma\in\mathcal{C}(M)\mid v\notin\gamma\}$.
\end{definition}

If $M$ is a binary matroid, then $M-v$ is the binary matroid on $V\backslash
\{v\}$ with $Z(M-v)=Z(M)\cap2^{V\backslash\{v\}}$.

\begin{definition}
\label{contr}If $M$ is a matroid on $V$ and $v\in V$, then the
\emph{contraction} $M/v$ is the matroid on $V\backslash\{v\}$ with
$\mathcal{C}(M/v)$ $=$ $\{$minimal nonempty subsets $\gamma\subseteq
V\backslash\{v\}\mid\gamma\cup\{v\}$ contains an element of $\mathcal{C}(M)\}$.
\end{definition}

If $M$ is a binary matroid, let $[v]$ denote the subspace of $2^{V}$ spanned
by $\{v\}$; we identify $2^{V\backslash\{v\}}$ with $2^{V}/[v]$ in the natural
way. Then $M/v$ is the binary matroid on $V\backslash\{v\}$ with
$Z(M/v)=(Z(M)+[v])/[v]$.

Our first indication of the importance of local complementation for adjacency
matroids is the fact that the matroid minors $M_{A}(G)/v$ and $M_{A}(G)-v$ can
always be obtained by deleting $v$ from graphs related to $G$ through local complementation.

\begin{theorem}
\label{loopcontr}If $v\in V(G)$ is a looped vertex then $M_{A}(G)/v$ $=$
$M_{A}(G^{v}-v)$.
\end{theorem}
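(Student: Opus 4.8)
The plan is to argue entirely at the level of cycle spaces, using the binary-contraction formula $Z(M/v)=(Z(M)+[v])/[v]$ together with the bijection of Theorem~\ref{bin}; the hypothesis that $v$ is looped will be what makes the elimination of the $v$-coordinate possible. First I would record the matrix effect of local complementation. Writing $v$ first, put $\mathcal{A}(G)=\begin{pmatrix}1 & b^{T}\\ b & D\end{pmatrix}$, where $b\in GF(2)^{V\setminus\{v\}}$ is the list of neighbors of $v$ (the off-diagonal part of the $v$-row) and $D=\mathcal{A}(G-v)$; the top-left entry is $1$ precisely because $v$ is looped. Reading off the definition of $G^{v}$, local complementation at $v$ leaves the loop on $v$ and all edges incident to $v$ unchanged, while it toggles $a_{wx}$ exactly when $w$ and $x$ are both neighbors of $v$ (the case $w=x$ being the loop-toggling on neighbors). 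Over $GF(2)$ this is the symmetric rank-one update $D\mapsto D+bb^{T}$, so $\mathcal{A}(G^{v})=\begin{pmatrix}1 & b^{T}\\ b & D+bb^{T}\end{pmatrix}$ and hence $\mathcal{A}(G^{v}-v)=D+bb^{T}$. Letting $N(\cdot)$ denote nullspace, this gives $Z(M_{A}(G^{v}-v))=N(D+bb^{T})$.

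Next I would compute $Z(M_{A}(G))=N(\mathcal{A}(G))$. A vector $(t,x)$ with $t\in GF(2)$ and $x\in GF(2)^{V\setminus\{v\}}$ lies in the nullspace iff $t+b^{T}x=0$ and $tb+Dx=0$. The first equation can be solved for $t$ — and here the looped hypothesis is essential, since the coefficient of $t$ is $a_{vv}=1$ — giving $t=b^{T}x$. Substituting into the second equation yields $Dx+b(b^{T}x)=(D+bb^{T})x=0$. Thus $(t,x)\in Z(M_{A}(G))$ if and only if $x\in N(D+bb^{T})$ and $t=b^{T}x$.

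Finally I would push this through the contraction formula. Identifying $2^{V\setminus\{v\}}$ with $2^{V}/[v]$, the quotient map sends $(t,x)\mapsto x$ and kills $[v]$, so the image of $Z(M_{A}(G))+[v]$ under this map is exactly $\{\,x : x\in N(D+bb^{T})\,\}=N(D+bb^{T})$. Therefore $Z(M_{A}(G)/v)=(Z(M_{A}(G))+[v])/[v]=N(D+bb^{T})=Z(M_{A}(G^{v}-v))$, and since both sides are binary matroids on $V\setminus\{v\}$, Theorem~\ref{bin} forces $M_{A}(G)/v=M_{A}(G^{v}-v)$.

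I expect the main obstacle to be the careful verification in the first step that local complementation at a looped vertex corresponds exactly to the symmetric rank-one update $D+bb^{T}$: one must check the diagonal (loop-toggling) case correctly and confirm that the $v$-row and $v$-column are genuinely unchanged, so that deleting $v$ really produces $D+bb^{T}$. Once that dictionary is in place, the remainder is routine $GF(2)$ linear algebra, with the looped hypothesis entering in exactly one place, namely to solve $t+b^{T}x=0$ for $t$.
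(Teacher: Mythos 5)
Your proof is correct, and it is essentially the paper's own linear-algebra argument from Section 5.1 in a slightly different packaging: your rank-one update $D+bb^{T}$ is exactly the paper's block matrix $\begin{pmatrix}A^{c} & B\\ C & D\end{pmatrix}$ (since adding $bb^{T}$ toggles precisely the neighbor-by-neighbor block, turning $A$ into $A^{c}$), and your elimination of $t$ via $t=b^{T}x$ is the same step the paper performs by adding the $v$-row to the neighbor rows. The only real difference is that you conclude through the cycle-space formula $Z(M/v)=(Z(M)+[v])/[v]$ and Theorem~\ref{bin}, whereas the paper phrases the conclusion in terms of column dependencies and circuits; both are sound.
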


\begin{theorem}
\label{nonloopcontr}Suppose $v$ is an unlooped vertex of $G$.

\begin{enumerate}
\item If $v$ is isolated then $M_{A}(G)/v$ $=$ $M_{A}(G-v)$.

\item If $w$ is an unlooped neighbor of $v$ then $M_{A}(G)/v$ $=$
$M_{A}((G^{w})^{v}-v)$.

\item If $w$ is a looped neighbor of $G$ then $M_{A}(G)/v$ $=$ $M_{A}%
(((G^{v})^{w})^{v}-v)$.
\end{enumerate}
\end{theorem}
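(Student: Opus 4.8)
The plan is to reduce all three parts to the looped case already handled in Theorem \ref{loopcontr}, by exploiting local complementation to convert the unlooped vertex $v$ into a looped one. The single fact that makes this work is an invariance lemma which I would isolate first: \emph{if $w$ is an unlooped vertex of $G$ then $M_A(G^{w}) = M_A(G)$.} To prove it, index $w$ first and write $\mathcal{A}(G) = \begin{pmatrix} a_{ww} & n^{T} \\ n & B \end{pmatrix}$, where $n$ is the neighbourhood vector of $w$ and $B = \mathcal{A}(G-w)$. Since over $GF(2)$ we have $n_{x}^{2}=n_{x}$, the diagonal of $nn^{T}$ is $n$ itself, so local complementation at $w$ is exactly the operation $B \mapsto B + nn^{T}$ and $\mathcal{A}(G^{w}) = \begin{pmatrix} a_{ww} & n^{T} \\ n & B+nn^{T} \end{pmatrix}$. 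If $w$ is unlooped then $a_{ww}=0$, so a pair $(r,z)$ lies in the nullspace of $\mathcal{A}(G)$ precisely when $n^{T}z = 0$ and $rn + Bz = 0$; the corresponding condition for $\mathcal{A}(G^{w})$ differs only by the term $n(n^{T}z)$, which vanishes because $n^{T}z = 0$. Hence the two nullspaces coincide, and since the nullspace of the adjacency matrix is $Z(M_A(\cdot))$, Theorem \ref{bin} gives $M_A(G^{w}) = M_A(G)$.

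Granting the lemma, the three parts follow quickly. For part 1, the $v$-row and $v$-column of $\mathcal{A}(G)$ are zero, so the quotient $(Z(M_A(G)) + [v])/[v]$ defining $Z(M_A(G)/v)$ is just the nullspace of $\mathcal{A}(G-v)$, namely $Z(M_A(G-v))$; Theorem \ref{bin} then gives $M_A(G)/v = M_A(G-v)$. For part 2, let $w$ be an unlooped neighbour of $v$. Local complementation at $w$ toggles the loop on its neighbour $v$, so $v$ is looped in $G^{w}$; thus Theorem \ref{loopcontr} applied to $G^{w}$ yields $M_A(G^{w})/v = M_A((G^{w})^{v}-v)$. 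Combining this with the lemma (which applies since $w$ is unlooped) gives $M_A(G)/v = M_A(G^{w})/v = M_A((G^{w})^{v}-v)$. For part 3, let $w$ be a looped neighbour of $v$. Because $v$ itself is unlooped, the lemma gives $M_A(G)/v = M_A(G^{v})/v$. In $G^{v}$ the vertex $v$ is still unlooped and still adjacent to $w$ (local complementation at $v$ changes neither $v$'s loop nor $v$'s incident edges), while the loop on the neighbour $w$ has been toggled off; so $w$ is an unlooped neighbour of $v$ in $G^{v}$, and part 2 applied to $G^{v}$ gives $M_A(G^{v})/v = M_A(((G^{v})^{w})^{v}-v)$. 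Chaining the equalities completes part 3.

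The arithmetic here is light; the step I would watch most carefully --- and treat as the main obstacle --- is the loop-and-adjacency bookkeeping that guarantees the hypotheses of the invoked results actually hold after each local complementation. The whole reduction rests on two features of the definition of $G^{v}$: that it fixes both the loop status and the incident edges at $v$ itself (so $v$ remains unlooped and $w$ remains a neighbour), and that it toggles the loop on every neighbour of $v$ (so a looped neighbour becomes unlooped, which is exactly what turns part 3 into part 2). Once these are checked, the invariance lemma transports the contraction $M_A(G)/v$ without change to a graph in which $v$ has become looped, where Theorem \ref{loopcontr} does the real work.

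As a cross-check, and as the ``linear algebra'' alternative to invoking the lemma in part 2, I would also verify the identity by direct elimination. Writing $\mathcal{A}(G)$ in blocks indexed by $v$, $w$, and $U = V \setminus \{v,w\}$, and letting $\beta$ and $\gamma$ record the adjacencies of $v$ and of $w$ to $U$ and $D = \mathcal{A}(G[U])$, one finds that both $\mathcal{A}(G)$ and $\mathcal{A}(G^{w})$ have nullspace whose image modulo $[v]$ is $\{(\beta^{T}y,\,y) : (D + \beta\gamma^{T} + \gamma\beta^{T})y = 0\}$ in the coordinates indexed by $w$ and $U$. This common subspace is $Z(M_A(G)/v)$, and Theorem \ref{loopcontr} identifies it with $Z(M_A((G^{w})^{v}-v))$, recovering part 2 without explicit appeal to the invariance lemma.
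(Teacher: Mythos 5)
Your proof is correct and follows essentially the same route as the paper: your ``invariance lemma'' is exactly part 1 of Theorem \ref{lcmat} (which the paper establishes by the equivalent row-operation argument just before proving this theorem), and the reductions in parts 2 and 3 --- turn $v$ into a looped vertex via a local complementation that preserves the matroid, then invoke Theorem \ref{loopcontr} --- are the paper's reductions verbatim. The only cosmetic difference is in part 1, where the paper observes that an isolated unlooped $v$ is a loop of $M_A(G)$ and cites Theorem \ref{deldel}, while you compute the quotient of nullspaces directly; both are fine.
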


\begin{theorem}
\label{deldel}If $v$ is not a coloop of $M_{A}(G)$ then $M_{A}(G)-v$ $=$
$M_{A}(G-v)$.
\end{theorem}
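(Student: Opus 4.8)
The plan is to work entirely at the level of cycle spaces, using the characterization from comment 2 after Definition \ref{mat} together with the deletion formula $Z(M-v)=Z(M)\cap 2^{V\backslash\{v\}}$ stated after Definition \ref{del}. Writing $A=\mathcal{A}(G)$ and $A'=\mathcal{A}(G-v)$, I observe that $A'$ is exactly the submatrix of $A$ obtained by striking out the row and column indexed by $v$. So the statement $M_{A}(G)-v=M_{A}(G-v)$ amounts to the claim that
\[
Z(M_{A}(G))\cap 2^{V\backslash\{v\}}=\operatorname{null}(A'),
\]
where I identify a subset $S\subseteq V\backslash\{v\}$ with the corresponding $0/1$ vector. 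First I would unpack both sides: a vector $x$ supported on $V\backslash\{v\}$ lies in the left-hand side iff $Ax=0$, and it lies in the right-hand side iff $A'x=0$. Because $x$ has a zero entry at $v$, the product $A'x$ records precisely the entries of $Ax$ at rows in $V\backslash\{v\}$; the only possible discrepancy between $Ax=0$ and $A'x=0$ is the single equation coming from row $v$ of $A$.

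The containment $Z(M_{A}(G))\cap 2^{V\backslash\{v\}}\subseteq\operatorname{null}(A')$ is therefore automatic, since $Ax=0$ forces all coordinates of $Ax$ to vanish, in particular those at rows other than $v$. The substance is the reverse containment: I must show that if $x$ is supported off $v$ and $A'x=0$, then also the row-$v$ equation $\sum_{w\neq v}a_{vw}x_w=0$ holds, so that $x\in\operatorname{null}(A)$. This is exactly where the hypothesis that $v$ is \emph{not} a coloop of $M_{A}(G)$ must enter, and this is the step I expect to be the main obstacle. The reverse containment can fail in general: if $v$ were a coloop, then $v$ would belong to no circuit, which (via the cycle-space description of coloops) forces every nonzero element of the cycle space of $M_{A}(G)-v$ to pick up a nonzero row-$v$ entry, breaking the equality.

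To convert the coloop hypothesis into the needed algebraic fact, I plan to use the linear-algebraic characterization of coloops in a binary matroid represented by a symmetric matrix. An element $v$ is a coloop of $M$ precisely when $v$ lies in no circuit, equivalently when $\{v\}$ is independent in the cocircuit sense — concretely, when the row space of $A$ restricted to coordinate $v$ is ``forced,'' i.e. when the column of $A$ indexed by $v$ is \emph{not} in the span of the other columns together with the constraint that row $v$ vanishes. I would phrase this via orthogonality: since $A$ is symmetric, $\operatorname{null}(A)$ is the orthogonal complement of the column space of $A$, and $v$ is a coloop of $M_{A}(G)$ iff the standard basis vector $e_v$ does not lie in the column space $\mathcal{R}=\operatorname{col}(A)$ (equivalently, iff no element of $Z(M_{A}(G))=\operatorname{null}(A)$ has a $1$ in coordinate $v$). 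The key lemma I will extract is: $v$ is not a coloop iff $e_v\in\mathcal{R}$, iff there exists $y$ with $Ay=e_v$.

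Given such $y$, the reverse containment follows cleanly. Suppose $x$ is supported off $v$ with $A'x=0$; then $(Ax)_w=0$ for all $w\neq v$, so $Ax\in\{0,e_v\}$. If $Ax=e_v$ I derive a contradiction: taking the inner product over $GF(2)$, $x^{\top}Ax=x^{\top}e_v=x_v=0$, while symmetry of $A$ and $Ax=e_v$ give $x^{\top}Ax=(Ax)^{\top}x=e_v^{\top}x=x_v$, which is consistent and hence not yet a contradiction — so instead I will argue directly from $e_v\in\mathcal{R}$. Since $e_v=Ay$ lies in the column space, and $\operatorname{null}(A)=\mathcal{R}^{\perp}$, every $x\in\operatorname{null}(A)$ satisfies $x^{\top}e_v=0$, i.e. $x_v=0$; but I want the statement for $x$ with merely $Ax\in\{0,e_v\}$. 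If $Ax=e_v$ then $x\notin\operatorname{null}(A)$, yet I can test against $y$: $y^{\top}(Ax)=y^{\top}e_v=y_v$, while $y^{\top}Ax=(Ay)^{\top}x=e_v^{\top}x=x_v=0$ using the support hypothesis on $x$; these need not clash. I therefore plan instead the following robust finish: consider the two candidate solution spaces and count dimensions. One has $\dim\operatorname{null}(A')=|V|-1-\operatorname{rank}(A')$ and the other has $\dim\bigl(\operatorname{null}(A)\cap 2^{V\backslash\{v\}}\bigr)$; using that $e_v\in\operatorname{col}(A)$ implies $\operatorname{rank}(A')=\operatorname{rank}(A)-1$ (deleting row and column $v$ drops the rank by exactly one when $v$ is not a coloop), a dimension count forces the two spaces, already known to be nested, to coincide. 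The crux, and the step deserving the most care, is establishing this rank drop $\operatorname{rank}(A')=\operatorname{rank}(A)-1$ from the noncoloop hypothesis; everything else is bookkeeping with the cycle-space formulas already provided.
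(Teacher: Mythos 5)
Your setup is sound: identifying $Z(M_{A}(G)-v)=\operatorname{null}(\mathcal{A}(G))\cap 2^{V\backslash\{v\}}$ and $Z(M_{A}(G-v))=\operatorname{null}(A')$, noting the forward containment is automatic, and locating the crux in the single row-$v$ equation is exactly right. But the key lemma you extract is backwards. For a symmetric matrix, $\operatorname{null}(A)=\operatorname{col}(A)^{\perp}$, so the (correct) statement ``no element of $\operatorname{null}(A)$ has a $1$ in coordinate $v$'' is equivalent to $e_{v}\in\operatorname{col}(A)$, not $e_{v}\notin\operatorname{col}(A)$. That is: $v$ is a coloop of $M_{A}(G)$ if and only if $e_{v}\in\operatorname{col}(A)$. (Check: for a single looped vertex, $A=(1)$, $v$ is a coloop, and $e_{v}\in\operatorname{col}(A)$.) Your inability to extract a contradiction from $Ax=e_{v}$ using a vector $y$ with $Ay=e_{v}$ --- the passages ``not yet a contradiction'' and ``these need not clash'' --- is a symptom of working with the reversed lemma. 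With the lemma stated correctly the direct argument closes in one line: if $x$ is supported off $v$ and $A'x=0$ then $Ax\in\{0,e_{v}\}$, and $Ax=e_{v}$ would place $e_{v}$ in $\operatorname{col}(A)$, making $v$ a coloop and contradicting the hypothesis.

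The fallback dimension count also rests on a false rank formula. When $v$ is not a coloop, deleting the row and column of $v$ does \emph{not} drop the rank: $\operatorname{rank}(A')=\operatorname{rank}(A)$ (this is precisely Lemma \ref{noncoloop} of the paper, and it is what makes the dimensions match, since $r(M_{A}(G)-v)=\operatorname{rank}(A)$ as well when the $v$-column is redundant). For example, with $G=K_{3}$ unlooped no vertex is a coloop, and both $\mathcal{A}(K_{3})$ and $\mathcal{A}(K_{3}-v)$ have rank $2$. When $v$ \emph{is} a coloop the drop can be $1$ or $2$ (for unlooped $K_{2}$ it is $2$). So the quantity you flag as ``the step deserving the most care'' is not merely unproved --- the equation you propose to prove is false, and the true equation is the whole content of the theorem. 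The paper's own proof supplies exactly this missing step via symmetry: since $v$ is not a coloop, its column is a sum of other columns, hence (by symmetry of $\mathcal{A}(G)$) its row is the sum of the corresponding other rows, so elementary row operations eliminate the $v$-row from the column-deleted matrix without changing the right nullspace, giving $\operatorname{null}\begin{pmatrix}\mathbf{1} & \mathbf{0}\\ A & B\\ C & D\end{pmatrix}=\operatorname{null}\begin{pmatrix}A & B\\ C & D\end{pmatrix}$ directly. Your approach, once the orthogonality lemma is corrected, is an equivalent repackaging of that symmetry argument; as written it does not go through.
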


In general, Theorem \ref{deldel} fails for coloops. For example, let $v$ and
$w$ be the vertices of the simple path $P_{2}$ of length two. Then $w$ is
isolated and unlooped in $P_{2}-v$; consequently $\mathcal{C}(M_{A}(P_{2}-v))$
$=$ $\{\{w\}\}$ even though $\mathcal{C}(M_{A}(P_{2}))$ $=$ $\varnothing$.

Observe that if $M$ is a matroid on $V$ and $v\in V$, then $M-v$ $=$ $M/v$ if
and only if $v$ is either a loop or a coloop. (For if $v$ appears in any
circuit $\gamma$ with $\left\vert \gamma\right\vert >1$ then $\gamma
\backslash\{v\}$ contains a circuit of $M/v$ but $\gamma\backslash\{v\}$
contains no circuit of $M-v$.) It follows that the failure of Theorem
\ref{deldel} for coloops is not a significant inconvenience: if $v$ is a
coloop of $M_{A}(G)$ then we may refer to Theorem \ref{loopcontr} or Theorem
\ref{nonloopcontr} to describe $M_{A}(G)-v$ $=$ $M_{A}(G)/v$.

Another instance of the importance of local complementation is the fact that
matroid deletions from $M_{A}(G)$ and $M_{A}(G^{v})$ always coincide.

\begin{theorem}
\label{lcmat2}If $v\in V(G)$ then $M_{A}(G)-v$ $=$ $M_{A}(G^{v})-v$.
\end{theorem}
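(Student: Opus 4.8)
The plan is to pass from matroids to cycle spaces and then to a short linear-algebra computation. By Theorem~\ref{bin} the correspondence $Z$ is a bijection, so $M_{A}(G)-v = M_{A}(G^{v})-v$ holds precisely when the two deletions have equal cycle spaces. Using the description of deletion for binary matroids together with the fact that $Z(M_{A}(G))$ is the nullspace of $\mathcal{A}(G)$, we have that $Z(M_{A}(G)-v) = Z(M_{A}(G)) \cap 2^{V\setminus\{v\}}$ is the set of vectors lying in the nullspace of $\mathcal{A}(G)$ and supported in $V\setminus\{v\}$. Hence it suffices to prove
\[
\{\,s\in 2^{V\setminus\{v\}} : \mathcal{A}(G)s = 0\,\} \;=\; \{\,s\in 2^{V\setminus\{v\}} : \mathcal{A}(G^{v})s = 0\,\}.
\]

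Next I would record how local complementation acts on the Boolean adjacency matrix. Writing $\eta\in 2^{V}$ for the characteristic vector of the neighborhood $N_{G}(v)$ (so $\eta_{v}=0$), a direct check against the definition of $G^{v}$ gives the $GF(2)$ identity $\mathcal{A}(G^{v}) = \mathcal{A}(G) + \eta\eta^{T}$: the summand $\eta\eta^{T}$ toggles exactly the off-diagonal entries indexed by pairs of neighbors of $v$ and the diagonal entries indexed by neighbors of $v$, which is precisely the effect of local complementation, while leaving the row and column of $v$ untouched (since $\eta_{v}=0$), matching the fact that the adjacencies to $v$ and the loop status of $v$ are unchanged.

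The crux is that this rank-one correction vanishes on the relevant vectors. For any $s$ with $s_{v}=0$, the $v$-coordinate of $\mathcal{A}(G)s$ equals $\sum_{w} a_{vw}s_{w} = \eta^{T}s$, because the entries in row $v$ record exactly the neighbors of $v$ and the term $a_{vv}s_{v}$ drops out. Since the adjacencies to $v$ are the same in $G^{v}$, the $v$-coordinate of $\mathcal{A}(G^{v})s$ is likewise $\eta^{T}s$. Consequently, if $s\in 2^{V\setminus\{v\}}$ lies in either nullspace then its $v$-coordinate is $0$, forcing $\eta^{T}s=0$ and hence $\eta\eta^{T}s = \eta\,(\eta^{T}s) = 0$. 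Therefore $\mathcal{A}(G)s = \mathcal{A}(G^{v})s$ for every such $s$, so $s$ lies in one nullspace if and only if it lies in the other, and the two displayed sets coincide.

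I expect the only real subtlety to be the observation underlying the last paragraph: membership in the nullspace, together with $s_{v}=0$, automatically makes the support of $s$ meet $N_{G}(v)$ in an even number of vertices, which is exactly the condition that kills the rank-one term $\eta\eta^{T}$. Everything else is bookkeeping, namely the reduction via Theorem~\ref{bin} and the verification of the matrix identity $\mathcal{A}(G^{v})=\mathcal{A}(G)+\eta\eta^{T}$. A mild point to keep in mind is that $\mathcal{A}$ is Boolean, so the identity is to be read over $GF(2)$ and is insensitive to parallels or repeated loops; this is what lets me treat $G$ and its looped simple local complement $G^{v}$ on the same footing.
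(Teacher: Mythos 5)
Your proof is correct, and it lives in the same world as the paper's: a direct $GF(2)$ computation on the adjacency matrices, reduced via Theorem~\ref{bin} and the identity $Z(M-v)=Z(M)\cap 2^{V\setminus\{v\}}$. The mechanism differs slightly. The paper writes both matrices in block form with $v$ first and observes that the two column-deleted matrices
\[
\begin{pmatrix}\mathbf{1} & \mathbf{0}\\ A & B\\ C & D\end{pmatrix}
\quad\text{and}\quad
\begin{pmatrix}\mathbf{1} & \mathbf{0}\\ A^{c} & B\\ C & D\end{pmatrix}
\]
are row-equivalent (add the $v$-row to each neighbor row), so they have the same right nullspace outright --- no property of the nullspace vectors is needed. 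You instead encode the same structural fact as the rank-one identity $\mathcal{A}(G^{v})=\mathcal{A}(G)+\eta\eta^{T}$ with $\eta_{v}=0$, and then must additionally observe that any nullspace vector $s$ with $s_{v}=0$ satisfies $\eta^{T}s=(\mathcal{A}(G)s)_{v}=0$, which kills the correction term. Both arguments are sound and essentially equivalent (the paper's row operation is exactly the addition of $\eta\eta^{T}$ restricted to the columns other than $v$); the paper's version is marginally more economical since row-equivalence dispenses with the orthogonality observation, while your version makes explicit the pleasant fact that the obstruction to $\mathcal{A}(G)s=\mathcal{A}(G^{v})s$ is precisely the parity of $|\mathrm{supp}(s)\cap N_{G}(v)|$, which is forced to be even for nullspace vectors avoiding $v$.
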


The connection between the entire matroids $M_{A}(G)$ and $M_{A}(G^{v})$ is
more complicated. Recall that if $M$ and $M^{\prime}$ are matroids on disjoint
sets $V$ and $V^{\prime}$ then their \emph{direct sum} $M\oplus M^{\prime}%
$\ is the matroid on $V\cup V^{\prime}$ with $\mathcal{C}(M\oplus M^{\prime
})=\mathcal{C}(M)\cup\mathcal{C}(M^{\prime})$. Also, if $v$ is a single
element then $U_{1,1}(\{v\})$ denotes the matroid on $\{v\}$ in which $v$ is a
coloop, and $U_{1,0}(\{v\})$ denotes the matroid on $\{v\}$ in which $v$ is a
loop. Clearly $M=(M-v)\oplus U_{1,1}(\{v\})$ if and only if $v$ is a coloop of
$M$, and $M=(M-v)\oplus U_{1,0}(\{v\})$ if and only if $v$ is a loop of $M$.

\begin{theorem}
\label{lcmat}1. If $v\in V(G)$ is unlooped then $M_{A}(G^{v})$ $=$ $M_{A}(G)$.

2. If $v\in V(G)$ is a coloop of both $M_{A}(G)$ and $M_{A}(G^{v})$, then
$M_{A}(G^{v})$ $=$ $M_{A}(G)$.

3. If $v\in V(G)$ is looped and not a coloop of one of $M_{A}(G),M_{A}(G^{v}%
)$, then $v$ is a coloop of the other and $M_{A}(G^{v})\ncong M_{A}(G)$. More
specifically, if $\{M_{A}(G),M_{A}(G^{v})\}=\{M_{1},M_{2}\}$ with $v$ not a
coloop of $M_{1}$, then $v$ is a coloop of $M_{2}$ and $M_{2}=(M_{1}-v)\oplus
U_{1,1}(\{v\})$.
\end{theorem}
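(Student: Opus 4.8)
The plan is to base everything on one matrix identity. Let $n\in GF(2)^{V}$ be the characteristic vector of the set of neighbours of $v$ in $G$, so that $n_{v}=0$. First I would check that $\mathcal{A}(G^{v})=\mathcal{A}(G)+nn^{T}$: adding the rank-one symmetric matrix $nn^{T}$ toggles precisely the adjacencies between pairs of neighbours of $v$ and the loops on neighbours of $v$, and leaves the row and column indexed by $v$ unchanged. Three consequences are recorded for later use: $v$ has the same neighbours and the same loop status in $G$ and in $G^{v}$ (so the same $n$ serves for both, and $(G^{v})^{v}=G$); the $v$-th column of $\mathcal{A}(G)$ is $c_{v}=n$ if $v$ is unlooped and $c_{v}=n+e_{v}$ if $v$ is looped, where $e_{v}$ is the $v$-th standard basis vector; and, by Theorem \ref{bin}, each claimed equality $M_{A}(G^{v})=M_{A}(G)$ is equivalent to equality of the null spaces of $\mathcal{A}(G^{v})$ and $\mathcal{A}(G)$.

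For part 1 I would argue directly. If $v$ is unlooped and $\mathcal{A}(G)x=0$, then $\mathcal{A}(G^{v})x=n\,(n^{T}x)=n\,(c_{v}^{T}x)=n\,(\mathcal{A}(G)x)_{v}=0$, using the symmetry of $\mathcal{A}(G)$; hence the null space of $\mathcal{A}(G)$ is contained in that of $\mathcal{A}(G^{v})$, and the reverse inclusion follows by applying the same computation in $G^{v}$ (where $v$ is still unlooped) and using $(G^{v})^{v}=G$.

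For the looped case, write $N$ and $N'$ for the null spaces of $\mathcal{A}(G)$ and $\mathcal{A}(G^{v})$. Since $c_{v}=n+e_{v}$, the $v$-th coordinate of $\mathcal{A}(G)x=0$ reads $n^{T}x=x_{v}$, so $\mathcal{A}(G^{v})x=n\,x_{v}$ for every $x\in N$; the symmetric statement holds in $G^{v}$. Consequently $N_{0}:=\{x\in N:x_{v}=0\}=\{x\in N':x_{v}=0\}$, and $v$ is a coloop of $M_{A}(G)$ (resp. $M_{A}(G^{v})$) precisely when $N=N_{0}$ (resp. $N'=N_{0}$), since a coloop is exactly an element lying in no circuit, i.e. in the support of no null vector. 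Part 2 is then immediate: if $v$ is a coloop of both then $N=N_{0}=N'$, so $M_{A}(G)=M_{A}(G^{v})$.

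Part 3 is the substantive case, and the step I expect to be the main obstacle is ruling out that $v$ is a non-coloop of both matroids. Assume, by the $G\leftrightarrow G^{v}$ symmetry, that $v$ is not a coloop of $M_{1}=M_{A}(G)$, so some $x^{*}\in N$ has $x^{*}_{v}=1$; suppose toward a contradiction that some $y\in N'$ has $y_{v}=1$. From $\mathcal{A}(G)=\mathcal{A}(G^{v})+nn^{T}$ and $n^{T}y=y_{v}=1$ one gets $\mathcal{A}(G)y=n$, so $n$ lies in the column space of $\mathcal{A}(G)$; as the column $c_{v}=n+e_{v}$ lies there too, so does $e_{v}$. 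But $\mathcal{A}(G)$ is symmetric over $GF(2)$, so its column space equals $N^{\perp}$; hence $e_{v}\perp N$, i.e. $x_{v}=0$ for all $x\in N$, contradicting $x^{*}_{v}=1$. Therefore $v$ is a coloop of $M_{2}=M_{A}(G^{v})$. Finally, the direct-sum remark preceding the theorem gives $M_{2}=(M_{2}-v)\oplus U_{1,1}(\{v\})$, and $M_{2}-v=M_{A}(G^{v})-v=M_{A}(G)-v=M_{1}-v$ by Theorem \ref{lcmat2}, whence $M_{2}=(M_{1}-v)\oplus U_{1,1}(\{v\})$; and $M_{1}\ncong M_{2}$ because $N=N_{0}\oplus\langle x^{*}\rangle$ while $N'=N_{0}$, so the two matroids have different nullities.
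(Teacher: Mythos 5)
Your proof is correct, and it reorganizes the argument around a different engine than the paper's. The paper works with block matrices: part 1 is done by elementary row operations turning $\mathcal{A}(G)$ into $\mathcal{A}(G^{v})$; part 2 goes through Theorem \ref{lcmat2} and the observation that a coloop lies in no circuit; and part 3 rests on Lemma \ref{noncoloop} (a rank characterization of non-coloops via a dependency set $S_{v}$) together with a parity count showing $S_{v}$ meets the neighborhood of $v$ in an odd number of vertices, whence the columns of $\mathcal{A}(G^{v})$ indexed by $S_{v}$ sum to $e_{v}$. You instead derive everything from the single rank-one identity $\mathcal{A}(G^{v})=\mathcal{A}(G)+nn^{T}$ and the relation $n^{T}x=x_{v}$ satisfied by null vectors in the looped case; your part 3 replaces the paper's parity argument by the observation that a null vector of $\mathcal{A}(G^{v})$ supported on $v$ would force $e_{v}$ into the column space of $\mathcal{A}(G)$, which for a symmetric matrix over $GF(2)$ equals $N^{\perp}$, contradicting the existence of $x^{*}$. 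Both proofs ultimately exploit the same symmetry of $\mathcal{A}(G)$, but yours is more self-contained (it does not need Lemma \ref{noncoloop} or the block bookkeeping) and treats all three parts uniformly from one identity; the paper's version, in exchange, produces the explicit dependency set $S_{v}$ and the rank formulations that it reuses later (e.g.\ in the triple-coloop discussion and the proof of Theorem \ref{tripart}). Your closing steps --- the direct-sum decomposition via Theorem \ref{lcmat2} and the non-isomorphism via the nullity jump $N=N_{0}\oplus\langle x^{*}\rangle$ versus $N'=N_{0}$ --- are sound; the paper instead distinguishes the matroids by counting circuits, which is equivalent here.
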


Unlike Theorem \ref{iso}, Theorem \ref{lcmat} does not explain all
isomorphisms of adjacency matroids. For instance, the simple path of length
two has the same adjacency matroid as a disconnected graph consisting of two
looped vertices; but local complementation cannot disconnect a connected graph.

Theorems \ref{loopcontr} -- \ref{lcmat} are proven in\ Section 5, using
elementary linear algebra. In Section 6 we discuss Theorem \ref{trio}, which
provides another illustration of the connections tying adjacency matroids to
delta-matroids and the theory of the interlace polynomials. It is a matroid
version of Lemma 2 of Balister, Bollob\'{a}s, Cutler, and Pebody \cite{BBCS}.
A preliminary definition will be useful.

\begin{definition}
If $v$ is a vertex of $G$ then $G(v)$ denotes the graph obtained from $G$ by
removing every loop incident on $v$, $G(v,\ell)$ denotes the graph obtained
from $G(v)$ by attaching a loop at $v$, and $G(v,\ell i)$ denotes the graph
obtained from $G(v,\ell)$ by isolating $v$ (i.e., removing all non-loop edges
incident on $v$).
\end{definition}

\begin{theorem}
\label{trio}Let $v$ be a vertex of $G$. Then two of the three adjacency
matroids $M_{A}(G(v))$, $M_{A}(G(v,\ell))$, $M_{A}(G(v,\ell i))$ are the same,
and the other is different. The cycle space of the different matroid contains
the cycle space shared by the two that are the same, and its dimension is
greater by 1.
\end{theorem}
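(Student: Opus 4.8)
The plan is to reduce everything to linear algebra on cycle spaces (nullspaces), exploiting that the three graphs differ only in the row and column indexed by $v$. First I would fix $v$, set $A_{0}=\mathcal{A}(G-v)$ to be the common $(|V|-1)\times(|V|-1)$ block, and let $b$ be the $GF(2)$-column recording the non-loop neighbours of $v$. Writing a vector of $2^{V}$ as $(x_{v},y)$ with $x_{v}\in GF(2)$ and $y\in 2^{V\setminus\{v\}}$, the Boolean adjacency matrices of $G(v)$, $G(v,\ell)$, $G(v,\ell i)$ are $\begin{pmatrix}0 & b^{T}\\ b & A_{0}\end{pmatrix}$, $\begin{pmatrix}1 & b^{T}\\ b & A_{0}\end{pmatrix}$, $\begin{pmatrix}1 & 0\\ 0 & A_{0}\end{pmatrix}$ respectively. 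By comment 2 after Definition \ref{mat}, the three cycle spaces $N_{1},N_{2},N_{3}$ are exactly the nullspaces of these matrices.

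The key structural observation I would establish is that all three cycle spaces lie in the single subspace $U=\{(x_{v},y):A_{0}y+bx_{v}=0\}$, and that inside $U$ they are cut out by three linear functionals: $N_{1}=\ker(f_{1}|_{U})$, $N_{2}=\ker(f_{2}|_{U})$, $N_{3}=\ker(f_{3}|_{U})$, where $f_{1}(x_{v},y)=b^{T}y$, $f_{3}(x_{v},y)=x_{v}$, and $f_{2}=f_{1}+f_{3}$. (The containment $N_{3}\subseteq U$ holds because its elements have $x_{v}=0$, whence $A_{0}y=0$.) With this in hand the theorem becomes the purely formal claim that \emph{exactly one} of $f_{1}|_{U},f_{2}|_{U},f_{3}|_{U}$ is the zero functional: if, say, $f_{i}|_{U}=0$ while $f_{j}|_{U}=f_{k}|_{U}\neq0$, then $N_{i}=U$ is the distinguished (different) matroid, the other two coincide in the codimension-one subspace $\ker(f_{j}|_{U})$, and $\dim U=\dim N_{j}+1$. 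This is precisely the asserted behaviour: the larger cycle space contains the shared one, with dimension greater by $1$.

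To prove that exactly one functional vanishes I would split on whether $b\in\mathrm{col}(A_{0})$, using the identity $\mathrm{col}(A_{0})=(\ker A_{0})^{\perp}$ valid for the symmetric matrix $A_{0}$ over $GF(2)$. If $b\notin\mathrm{col}(A_{0})$, then no element of $U$ has $x_{v}=1$, so $f_{3}|_{U}=0$ and $U=\{0\}\times\ker A_{0}$; meanwhile $b\not\perp\ker A_{0}$ forces $f_{1}|_{U}=f_{2}|_{U}\neq0$, so $N_{3}$ is distinguished. If $b\in\mathrm{col}(A_{0})$, then $f_{3}|_{U}\neq0$; choosing $y_{0}$ with $A_{0}y_{0}=b$ and setting $q=b^{T}y_{0}$, I would compute that $b^{T}y=q\,x_{v}$ for every $(x_{v},y)\in U$, i.e. $f_{1}|_{U}=q\,f_{3}|_{U}$. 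Hence $f_{1}|_{U}$ is either $0$ (when $q=0$, making $N_{1}$ distinguished) or equal to $f_{3}|_{U}$ (when $q=1$, forcing $f_{2}|_{U}=0$ and making $N_{2}$ distinguished); in both cases exactly one of the three functionals is zero.

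The main obstacle I anticipate is the $b\in\mathrm{col}(A_{0})$ computation, namely verifying $f_{1}|_{U}=q\,f_{3}|_{U}$. This hinges on two points that must be checked carefully over $GF(2)$: that $q=b^{T}y_{0}$ is independent of the chosen preimage $y_{0}$ (which follows from $b\perp\ker A_{0}$, equivalently $b\in\mathrm{col}(A_{0})$), and that $f_{1}$ vanishes on the slice $x_{v}=0$ of $U$ (again from $b\perp\ker A_{0}$). Everything else is routine, and the dimension bookkeeping ($\dim U=\dim\ker A_{0}$ or $\dim\ker A_{0}+1$ in the two cases) merely confirms the ``$+1$'' in the statement; note also that the argument simultaneously identifies which of the three matroids is the different one.
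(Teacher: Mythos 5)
Your proposal is correct, and it is worth noting that the paper itself does not actually prove Theorem \ref{trio}: Section 6 displays the same three bordered matrices, restates the claim as a statement about their nullspaces, and then defers the proof to the reference \cite{T3}. Your argument supplies a complete proof in exactly the spirit the paper indicates. The two load-bearing points both check out: the identification of the three cycle spaces as kernels of the functionals $f_{1}$, $f_{2}=f_{1}+f_{3}$, $f_{3}$ restricted to the common subspace $U=\{(x_{v},y): A_{0}y+bx_{v}=0\}$ is immediate from the block forms (for $N_{3}$, note $x_{v}=0$ forces $A_{0}y+bx_{v}=A_{0}y=0$, so $N_{3}\subseteq U$ as you say); and the identity $\mathrm{col}(A_{0})=(\ker A_{0})^{\perp}$ holds for a symmetric matrix over $GF(2)$ because the row space is always the orthogonal complement of the right nullspace and symmetry identifies row and column spaces. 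Since $f_{1}+f_{2}+f_{3}=0$, once you show exactly one restriction vanishes, the other two are automatically equal and nonzero, so the two corresponding cycle spaces coincide as a codimension-one subspace of $U$ while the third equals $U$; injectivity of $Z$ (Theorem \ref{bin}) converts this into the matroid statement. Your case analysis on $b\in\mathrm{col}(A_{0})$, including the well-definedness of $q=b^{T}y_{0}$ and the computation $f_{1}|_{U}=q\,f_{3}|_{U}$, is sound, and it covers the degenerate cases ($b=0$, $A_{0}$ empty) without modification. As a bonus, your proof identifies which matroid is the distinguished one in each case, which matches the paper's later discussion of the principal vertex tripartition.
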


The adjacency matroid $M_{A}(G(v,\ell i))$ may be described in two other ways.
As $v$ is an isolated, looped vertex of $G(v,\ell i)$, it is a coloop of
$M_{A}(G(v,\ell i))$; hence $M_{A}(G(v,\ell i))=M_{A}(G(v,\ell i)-v)\oplus
U_{1,1}(\{v\})=M_{A}(G-v)\oplus U_{1,1}(\{v\})$. Another description comes
from Theorem \ref{loopcontr}, which tells us that $M_{A}(G-v)=M_{A}%
(G^{v}(v,\ell))/v$. Also, the fact that $v$ is not a loop of $M_{A}%
(G^{v}(v,\ell))$ implies that $M_{A}(G^{v}(v,\ell))/v$ and $M_{A}(G^{v}%
(v,\ell))$ have the same nullity. For ease of reference we state these
observations as a proposition.

\begin{proposition}
\label{new}If $v\in V(G)$ then $v$ is a coloop of
\[
M_{A}(G(v,\ell i))=M_{A}(G-v)\oplus U_{1,1}(\{v\})=(M_{A}(G^{v}(v,\ell
))/v)\oplus U_{1,1}(\{v\}).
\]
Consequently $M_{A}(G(v,\ell i))$, $M_{A}(G-v)$, $M_{A}(G^{v}(v,\ell))/v$ and
$M_{A}(G^{v}(v,\ell))$ all have the same nullity.
\end{proposition}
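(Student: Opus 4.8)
The plan is to assemble the three descriptions from facts already in hand: the block structure of $\mathcal{A}(G(v,\ell i))$ and a single application of Theorem \ref{loopcontr}, after which the nullity claim drops out.

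First I would use that the operations defining $G(v,\ell i)$ touch only the vertex $v$, which ends up isolated and looped. Thus $\mathcal{A}(G(v,\ell i))$ is block diagonal, with a $1\times1$ block $(1)$ in the row and column of $v$ and the principal submatrix $\mathcal{A}(G-v)$ elsewhere. The column of $v$ is $e_v$, which lies outside the span of the remaining columns, so $v$ is a coloop. Reading off the nullspace of this block matrix shows that $Z(M_A(G(v,\ell i)))$ is exactly $Z(M_A(G-v))$ extended by $0$ in the coordinate $v$; since $Z(U_{1,1}(\{v\}))=\{0\}$, Theorem \ref{bin} gives $M_A(G(v,\ell i))=M_A(G-v)\oplus U_{1,1}(\{v\})$. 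I would note that Theorem \ref{deldel} is not available here, its hypothesis failing for the coloop $v$, and that the block structure is precisely what rescues the deletion identity, because removing the isolated $v$ creates no new isolated unlooped vertex.

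For the middle matroid I would apply Theorem \ref{loopcontr} to $H:=G^v(v,\ell)$, in which $v$ is looped, obtaining $M_A(G^v(v,\ell))/v=M_A((G^v(v,\ell))^v-v)$, and then verify the graph identity $(G^v(v,\ell))^v-v=G-v$. The point is that local complementation at $v$, and toggling the loop at $v$, both fix the neighborhood $N_G(v)$; hence the two successive local complementations at $v$ toggle the adjacencies among $N_G(v)$ and the loops on those vertices exactly twice, restoring the induced graph on $V\setminus\{v\}$ to that of $G$. This gives $(G^v(v,\ell))^v-v=G[V\setminus\{v\}]=G-v$, so $M_A(G^v(v,\ell))/v=M_A(G-v)$.

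Finally the nullity statement follows: the summand $U_{1,1}(\{v\})$ has trivial cycle space, so the first equality forces $M_A(G(v,\ell i))$ and $M_A(G-v)$ to share a nullity, and the middle equality adds $M_A(G^v(v,\ell))/v$ to the list. Since $v$ is looped in $G^v(v,\ell)$, statement (i) following Definition \ref{mat} shows $\{v\}\notin Z(M_A(G^v(v,\ell)))$; then $Z(M)\cap[v]=\{0\}$ together with $Z(M/v)=(Z(M)+[v])/[v]$ gives $\dim Z(M/v)=\dim Z(M)$, so contraction preserves nullity and $M_A(G^v(v,\ell))$ joins the list as well. I expect the only genuinely delicate point to be the identity $(G^v(v,\ell))^v-v=G-v$: one must confirm that inserting the loop modification at $v$ between the two local complementations does not disturb the double-toggling on $N_G(v)$, which rests on local complementation at $v$ preserving both $N_G(v)$ and the structure induced away from $N_G(v)$.
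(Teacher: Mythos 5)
Your proof is correct and follows essentially the same route as the paper: the isolated looped vertex $v$ is a coloop of $M_{A}(G(v,\ell i))$, which gives the direct-sum decomposition, Theorem \ref{loopcontr} applied to $G^{v}(v,\ell)$ gives the middle description, and the nullity claim follows because $v$ is not a loop of $M_{A}(G^{v}(v,\ell))$ so contraction there preserves nullity. Your additional verifications (the block structure of $\mathcal{A}(G(v,\ell i))$ in place of Theorem \ref{deldel}, and the identity $(G^{v}(v,\ell))^{v}-v=G-v$, which holds up to simplification of parallel edges, a distinction $M_{A}$ ignores) only make explicit what the paper leaves implicit.
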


Recall that according to property (ii) above, $v$ must be a coloop of at least
one of the matroids $M_{A}(G(v))$, $M_{A}(G(v,\ell))$. Consequently $v$ must
fall under one (and only one) of these cases:

\begin{enumerate}
\item $v$ is a coloop of both $M_{A}(G(v))$ and $M_{A}(G(v,\ell))$

\item $v$ is a coloop of $M_{A}(G(v))$ and not $M_{A}(G(v,\ell))$

\item $v$ is a coloop of $M_{A}(G(v,\ell))$ and not $M_{A}(G(v))$
\end{enumerate}

As each vertex of $G$ must fall under precisely one of the cases 1 -- 3, we
obtain a partition of $V(G)$ into three subsets. We refer to this partition of
$V(G)$ as the \emph{principal vertex tripartition} of $G$. In Section 7 we
prove that the three subsets of the principal vertex tripartition correspond
precisely to the three alternatives of Theorem \ref{trio}: $v$ falls under
case 1 if and only if $M_{A}(G(v))=M_{A}(G(v,\ell))$; $v$ falls under case 2
if and only if $M_{A}(G(v))=M_{A}(G(v,\ell i))$; and $v$ falls under case 3 if
and only if $M_{A}(G(v,\ell))=M_{A}(G(v,\ell i))$. Moreover, $v$ falls under
case 1 in $G$ if and only if $v$ falls under case 2 in $G^{v}$, and vice
versa. Theorem \ref{tripart} below includes all of these results, and also
gives a few more details; in particular, the final assertion of case 1
corrects an error in \cite{Tv}.

\begin{theorem}
\label{tripart} Let $v$ be a vertex of $G$. Then the list $M_{A}(G(v))$,
$M_{A}(G(v,\ell))$, $M_{A}(G(v,\ell i))$, $M_{A}(G^{v}(v))$, $M_{A}%
(G^{v}(v,\ell))$, $M_{A}(G^{v}(v,\ell i))$ includes either two or three
distinct matroids. Only one of these distinct matroids does not include $v$ as
a coloop, and this matroid determines the others as follows.

1. If $v$ is a coloop of both $M_{A}(G(v))$ and $M_{A}(G(v,\ell))$ then $v$ is
not a coloop of $M_{A}(G^{v}(v,\ell))$,
\begin{align*}
M_{A}(G(v,\ell i))  &  =(M_{A}(G^{v}(v,\ell))/v)\oplus U_{1,1}(\{v\})\text{
and}\\
M_{A}(G(v))  &  =M_{A}(G(v,\ell))=M_{A}(G^{v}(v))=M_{A}(G^{v}(v,\ell i))\\
&  =(M_{A}(G^{v}(v,\ell))-v)\oplus U_{1,1}(\{v\}).
\end{align*}
Moreover, $M_{A}(G(v,\ell i))$ and $M_{A}(G^{v}(v,\ell))$ have the same
nullity, say $\nu+1$; the nullity of $M_{A}(G(v))$ is $\nu$ and
\[
Z(M_{A}(G(v,\ell i)))\cap Z(M_{A}(G^{v}(v,\ell)))=Z(M_{A}(G(v))).
\]
This case requires that $G^{v}-v$ have at least one looped vertex.

2. If $v$ is not a coloop of $M_{A}(G(v,\ell))$ then the assertions of case 1
hold, with the roles of $G$ and $G^{v}$ interchanged.

3. If $v$ is not a coloop of $M_{A}(G(v))$ then%
\begin{align*}
M_{A}(G^{v}(v))  &  =M_{A}(G(v))\text{ and}\\
M_{A}(G(v,\ell))  &  =M_{A}(G^{v}(v,\ell))=M_{A}(G(v,\ell i))=M_{A}%
(G^{v}(v,\ell i))\\
&  =(M_{A}(G(v))-v)\oplus U_{1,1}(\{v\}).
\end{align*}
Moreover, the nullity of $M_{A}(G(v))$ is 1 more than the nullity of
$M_{A}(G(v,\ell))$, and
\[
Z(M_{A}(G(v,\ell)))\subset Z(M_{A}(G(v))).
\]

\end{theorem}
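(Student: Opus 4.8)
The plan is to reduce every assertion to the nullspaces of six explicit $GF(2)$-matrices. Put $D=\mathcal{A}(G-v)$ and let $b\in GF(2)^{V\setminus\{v\}}$ be the neighbourhood vector of $v$ (so $b_{w}=1$ iff $w$ is adjacent to $v$). Ordering $v$ first,
\[
\mathcal{A}(G(v))=\begin{pmatrix}0 & b^{T}\\ b & D\end{pmatrix},\qquad
\mathcal{A}(G(v,\ell))=\begin{pmatrix}1 & b^{T}\\ b & D\end{pmatrix},\qquad
\mathcal{A}(G(v,\ell i))=\begin{pmatrix}1 & 0\\ 0 & D\end{pmatrix}.
\]
Local complementation fixes the neighbourhood of $v$ and changes only the entries among neighbours of $v$, so $\mathcal{A}(G^{v}-v)=D+bb^{T}=:D'$, and the three matrices for $G^{v}$ have the same block shapes with $D$ replaced by $D'$ and the same $b$. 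Since $Z(M_{A}(\cdot))$ is the nullspace of the corresponding matrix (comment 2 after Definition \ref{mat}), I would compute all six cycle spaces in terms of $N=\ker D$, the subspace $W=N\cap\{b\}^{\perp}$, and the solvability of $Dx=b$.

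Next I would identify the principal vertex tripartition with a linear-algebraic trichotomy. A vertex is a coloop of a binary matroid exactly when no cycle-space vector has a $1$ in its coordinate; reading this off the block forms shows that $v$ is a coloop of $M_{A}(G(v))$ (resp.\ $M_{A}(G(v,\ell))$) iff $Dx=b$ has no solution with $b^{T}x=0$ (resp.\ $b^{T}x=1$). Over $GF(2)$, $b^{T}x$ is constant on the solution set of $Dx=b$ whenever that set is nonempty (because solvability forces $b\perp\ker D$), so precisely one of three situations holds: (1) $Dx=b$ is unsolvable; (2) it is solvable with $b^{T}x=1$; (3) it is solvable with $b^{T}x=0$. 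A direct check matches these to cases 1, 2, 3 and, together with the nullspace descriptions, reproves the correspondence with Theorem \ref{trio} along with the displayed nullity counts and cycle-space inclusions for the triple $M_{A}(G(v)),M_{A}(G(v,\ell)),M_{A}(G(v,\ell i))$. Counting distinct matroids and their coloop status in each situation then yields the opening claim that the six-term list contains two or three distinct matroids, exactly one of which omits $v$ as a coloop.

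The key structural device is the interchange $G\leftrightarrow G^{v}$. From $D'x=b\iff Dx=(1+b^{T}x)b$ one checks that passing from $D$ to $D'$ swaps situations (1) and (2) and fixes (3); equivalently $v$ falls under case 1 in $G$ iff under case 2 in $G^{v}$, and under case 3 in both. Hence I would prove Case 2 of the theorem simply by applying Case 1 to $G^{v}$, using $M_{A}((G^{v})^{v})=M_{A}(G)$ to restore the roles. For Case 3 directly, $v$ is unlooped in $G(v)$, so Theorem \ref{lcmat}(1) gives $M_{A}(G^{v}(v))=M_{A}(G(v))$ at once; the remaining equalities among the four $(v,\ell)$- and $(v,\ell i)$-matroids follow by combining the case-3 identity $M_{A}(G(v,\ell))=M_{A}(G(v,\ell i))$, Theorem \ref{lcmat2} (which gives $M_{A}(G(v,\ell))-v=M_{A}(G^{v}(v,\ell))-v$), the coloop decomposition $M=(M-v)\oplus U_{1,1}(\{v\})$, and the computation $\ker D'=\ker D$ valid in case 3; the nullity drop and the inclusion $Z(M_{A}(G(v,\ell)))\subset Z(M_{A}(G(v)))$ are immediate from Theorem \ref{trio}.

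For Case 1, the decomposition $M_{A}(G(v,\ell i))=(M_{A}(G^{v}(v,\ell))/v)\oplus U_{1,1}(\{v\})$ and the equal-nullity assertions are exactly Proposition \ref{new}, and the chain $M_{A}(G(v))=M_{A}(G(v,\ell))=M_{A}(G^{v}(v))=M_{A}(G^{v}(v,\ell i))=(M_{A}(G^{v}(v,\ell))-v)\oplus U_{1,1}(\{v\})$ follows from the case-1 equalities, the case-2 equalities for $G^{v}$, Theorem \ref{lcmat2}, and the coloop decomposition; the intersection identity and the two nullities fall straight out of the explicit cycle spaces. The one genuinely new point is the final claim that case 1 forces $G^{v}-v$ to have a looped vertex, i.e.\ that $D'$ is not hollow; I would settle this with the $GF(2)$ quadratic-form identity $x^{T}D'x=\mathrm{diag}(D')^{T}x$, since in case 1 there is $x_{0}$ with $D'x_{0}=b$ and $b^{T}x_{0}=1$ (this being case 2 for $G^{v}$), whence $\mathrm{diag}(D')^{T}x_{0}=x_{0}^{T}D'x_{0}=b^{T}x_{0}=1$ and so $\mathrm{diag}(D')\neq0$. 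I expect the main obstacle to be the bookkeeping that keeps $G$ and $G^{v}$ in lockstep—pinning down the case 1/case 2 interchange precisely enough that Case 2 genuinely reduces to Case 1, and matching each of the six nullspaces to the correct coloop status—while the looped-vertex assertion is a small but easily-overlooked step that needs the special characteristic-two identity rather than any field-independent argument.
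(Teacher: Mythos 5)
Your proposal is correct, and its core is organized differently from the paper's. The paper assembles Theorem \ref{tripart} from previously established machinery: the triple-coloop analysis of Theorem \ref{lcmatsharp} (itself proved in two separate places by a column-sum parity argument and by nullity comparisons under row/column operations), together with Theorem \ref{trio}, Corollary \ref{triple}, Theorems \ref{deldel}, \ref{lcmat}, \ref{lcmat2} and Proposition \ref{new}. You instead reduce everything to the single linear system $Dx=b$ with $D=\mathcal{A}(G-v)$ and $D'=D+bb^{T}$, and your trichotomy (unsolvable; solvable with $b^{T}x=1$; solvable with $b^{T}x=0$) is exactly the principal vertex tripartition, since $b^{T}x$ is constant on the solution set because $\mathrm{col}(D)=(\ker D)^{\perp}$ for symmetric $D$ over $GF(2)$. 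The chief gain is your treatment of the case~1/case~2 interchange: the identity $D'x=b\iff Dx=(1+b^{T}x)b$ gives, in one stroke, that situation (1) for $G$ is equivalent to the existence of $y\in\ker D$ with $b^{T}y=1$, which is precisely situation (2) for $G^{v}$, while situation (3) is self-paired; the paper obtains the same swap less directly through the triple-coloop detour. Your argument for the final claim of case 1 -- that there is $x_{0}$ with $D'x_{0}=b$, $b^{T}x_{0}=1$, so $\mathrm{diag}(D')^{T}x_{0}=x_{0}^{T}D'x_{0}=1$ and $G^{v}-v$ has a looped vertex -- is the same characteristic-two parity argument the paper gives by counting nonzero entries of a symmetric principal submatrix, just phrased via the quadratic-form identity. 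I verified the supporting computations you leave implicit ($\mathcal{A}(G^{v}-v)=D+bb^{T}$, the six nullspaces in terms of $N=\ker D$ and $W=N\cap\{b\}^{\perp}$, and $\ker D'=\ker D$ in case 3), and they all go through; the only caution is that the full write-up must actually carry out the cycle-space bookkeeping you defer, since the counts of distinct matroids and the intersection identity $Z(M_{A}(G(v,\ell i)))\cap Z(M_{A}(G^{v}(v,\ell)))=Z(M_{A}(G(v)))$ depend on it.
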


The principal vertex tripartition is reminiscent of the principal edge
tripartition of Rosenstiehl and Read \cite{RR}, but there is a fundamental
difference between the two tripartitions. The principal edge tripartition of
$G$ is determined by the polygon matroid of $G$, but the principal vertex
tripartition of $G$ is not determined by the adjacency matroid of $G$:

\begin{theorem}
\label{ind}The adjacency matroid and the principal vertex tripartition are
independent, in the sense that two graphs may have isomorphic adjacency
matroids and distinct principal vertex tripartitions, or nonisomorphic
adjacency matroids and equivalent principal vertex tripartitions.
\end{theorem}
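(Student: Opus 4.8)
The statement packages two independent existence claims, so the plan is to prove it by exhibiting explicit small looped simple graphs and verifying the two invariants by hand. The only computation needed is the coloop test: for a graph $H$ and a vertex $v$, the element $v$ is a coloop of $M_A(H)$ exactly when $v$ lies in no circuit, i.e. when $Z(M_A(H))\subseteq 2^{V\setminus\{v\}}$, equivalently when no vector in the nullspace of $\mathcal{A}(H)$ has a nonzero $v$-coordinate. For the tiny graphs below this reduces to reading off the nullspace of a $1\times1$ or $2\times2$ matrix over $GF(2)$, after which each vertex is sorted into case 1, 2, or 3 according to its coloop status in $M_A(G(v))$ and $M_A(G(v,\ell))$. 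I will record the principal vertex tripartition as the resulting labelling $V(G)\to\{1,2,3\}$; two tripartitions are \emph{equivalent} when some bijection of vertex sets preserves these labels, and \emph{distinct} otherwise.

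For the first half (isomorphic matroids, distinct tripartitions) I would reuse the pair already highlighted after Theorem \ref{lcmat}: let $G$ be the two-vertex graph $P_2$ with a single edge joining unlooped vertices $v,w$, and let $G'$ consist of two isolated looped vertices. Both adjacency matrices are nonsingular, so $M_A(G)$ and $M_A(G')$ are each the free matroid on a two-element set and are therefore isomorphic. For $G$, the matrices of $G(v)$ and $G(v,\ell)$ are both nonsingular, so $v$ (and by symmetry $w$) is a coloop of both and lies in case 1. For $G'$, removing the loop at a vertex makes it an isolated unlooped vertex, hence a matroid loop and not a coloop of $M_A(G'(v))$, while $G'(v,\ell)=G'$ has $v$ a coloop; thus every vertex of $G'$ lies in case 3. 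Since the label multisets are $\{1,1\}$ and $\{3,3\}$, no bijection of the two-element vertex sets, in particular no matroid isomorphism, can match the two tripartitions, so they are distinct.

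For the second half (nonisomorphic matroids, equivalent tripartitions) I would take the single-vertex graphs: let $G$ be one isolated unlooped vertex and $G'$ one looped vertex. In each graph, $G(v)$ is the isolated unlooped vertex, whose sole element is a matroid loop and hence not a coloop, while $G(v,\ell)$ is the looped vertex, whose element is a coloop; so the unique vertex of each graph lies in case 3 and the two (one-element) tripartitions are equivalent. Yet $M_A(G)=U_{1,0}(\{v\})$ is a single loop and $M_A(G')=U_{1,1}(\{v\})$ is a single coloop, and these are not isomorphic. If a two-vertex witness is preferred, the graphs of two isolated unlooped vertices and of two isolated looped vertices work identically: every vertex lies in case 3 in both, the tripartitions agree, but one matroid has two loops and the other has two coloops.

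The verifications above are entirely routine, so there is no real analytic obstacle; the one point that must be handled carefully is the meaning of \emph{distinct} and \emph{equivalent}, and the way to make independence manifest. The cleanest device is to treat the tripartition as the labelled function of case-labels attached to the vertices, taken up to bijection: in the first pair this label-invariant already differs ($\{1,1\}$ versus $\{3,3\}$) while the matroids coincide, and in the second pair the label-invariant coincides while the matroids differ. Exhibiting the two phenomena with the same underlying ground-set size in each pair forestalls any objection that the comparison is ill-posed, and confirms that neither invariant is a function of the other.
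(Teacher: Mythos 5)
Your proof is correct, and it follows the same basic strategy as the paper -- exhibit explicit small graphs and verify the two invariants by direct computation -- but with different witnesses. The paper works with the three-vertex graphs $K_{3}$, $K_{3\ell}$ and $P_{3\ell\ell}=K_{3}^{v}$: it computes $M_{A}(K_{3})\cong M_{A}(P_{3\ell\ell})\cong U_{3,2}$ and $M_{A}(K_{3\ell})\cong U_{3,3}$, and shows that the vertices of $K_{3}$ all fall under case 3 while $P_{3\ell\ell}$ and $K_{3\ell}$ each have one case-3 vertex and two case-2 vertices; the pair $(K_{3},P_{3\ell\ell})$ then gives isomorphic matroids with distinct tripartitions, and $(K_{3\ell},P_{3\ell\ell})$ gives nonisomorphic matroids with equivalent tripartitions. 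Your one- and two-vertex examples are smaller and the verifications are even more routine (I checked them: the single edge on unlooped $v,w$ does put both vertices in case 1, since $\bigl(\begin{smallmatrix}1&1\\1&0\end{smallmatrix}\bigr)$ is nonsingular over $GF(2)$, and the required condition that $G^{v}-v$ have a looped vertex is met); your explicit remark that the label multisets $\{1,1\}$ and $\{3,3\}$ rule out \emph{any} bijection, hence any matroid isomorphism, is exactly the right way to make ``distinct'' precise. What the paper's choice buys is that its examples are nondegenerate -- no isolated vertices, no matroid loops -- and do double duty, illustrating local complementation, triple coloops, deletions and contractions throughout Section 8; incidentally, your $P_{2}$ example exhibits case 1 of the tripartition, which none of the paper's three examples does. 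Both arguments are complete proofs of the theorem as stated.
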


After verifying Theorem \ref{ind} in Section 8, in
Sections~\ref{sec:recall_ss_dm} -- \ref{sec:dmat_alt_proofs_mat} we turn our
attention\ to the close connection between adjacency matroids and $\Delta$-matroids.

\begin{definition}
\label{dmatroid} \cite{Bu1} A \emph{delta-matroid} ($\Delta$-matroid for
short) is an ordered pair $D=(V,\sigma)$ consisting of a finite set $V$ and a
nonempty family $\sigma\subseteq2^{V}$ that satisfies the \emph{symmetric
exchange axiom}: For all $X,Y\in\sigma$ and all $u\in X\Delta Y$,
$X\Delta\{u\}\in\sigma$ or there is a $v\neq u\in X\Delta Y$ such that
$X\Delta\{u,v\}\in\sigma$ (or both).
\end{definition}

We often write $X\in D$ rather than $X\in\sigma$. The name $\Delta
$\emph{-matroid} reflects the fact that if $M$ is a matroid, the family of
bases $\mathcal{B}(M)$ satisfies the symmetric exchange property; indeed
$\mathcal{B}(M)$ satisfies the stronger \emph{basis exchange axiom}: if
$X,Y\in\mathcal{B}(M)$ and $u\in X\backslash Y$ then there is some $v\in
Y\backslash X$ with $X\Delta\{u,v\}\in\mathcal{B}(M)$.

\begin{definition}
\label{gmatroid} \cite{Bu1} If $G$ is a graph then its associated $\Delta
$-matroid is $\mathcal{D}_{G}=(V(G),\sigma)$ with
\[
\sigma=\{S\subseteq V(G)\mid\mathcal{A}(G[S])\text{ is nonsingular}\}\text{.}%
\]

\end{definition}

$\mathcal{D}_{G}$ is determined by the adjacency matroids $M_{A}(G[S])$:
$S\in\mathcal{D}_{G}$ if and only if $M_{A}(G[S])$ is a free matroid (i.e.,
$\mathcal{C}(M_{A}(G[S]))=\varnothing$). There is more to the relationship
between $\mathcal{D}_{G}$ and the matroids $M_{A}(G[S])$ than this obvious
observation, though. Recall that if $M$ is a matroid on $V$ then an
\emph{independent set }of $M$ is a subset of $V$ that contains no circuit of
$M$, and a \emph{basis} of $M$ is a maximal independent set. For $S\subseteq
V(G)$ let $\mathcal{I}(M_{A}(G[S]))$ and $\mathcal{B}(M_{A}(G[S]))$\ denote
the families of independent sets and bases (respectively) of the adjacency
matroid $M_{A}(G[S])$.

\begin{theorem}
\label{matdmat}Let $G$ be a graph, and suppose $S\subseteq V(G)$.

1. $M_{A}(G[S])$ is the matroid on $S$ with
\[
\mathcal{B}(M_{A}(G[S]))=\{\text{maximal }B\subseteq S\mid B\in\mathcal{D}%
_{G}\}.
\]

2. $M_{A}(G[S])$ is the matroid on $S$ with
\[
\mathcal{I}(M_{A}(G[S]))=\{I\subseteq S\mid\text{there is some }%
X\in\mathcal{D}_{G}\text{ with }I\subseteq X\subseteq S\}.
\]

3. $\mathcal{D}_{G[S]}$ is the $\Delta$-matroid $(S,\sigma)$ with%
\[
\sigma=%
{\displaystyle\bigcup\limits_{T\subseteq S}}
\mathcal{B}(M_{A}(G[T])).
\]

\end{theorem}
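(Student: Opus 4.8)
The plan is to reduce all three parts to one statement about the symmetric $GF(2)$-matrix $A=\mathcal{A}(G[S])$. Writing $A[X,X]$ for the principal submatrix on rows and columns $X$, and noting that for $X\subseteq S$ the condition $X\in\mathcal{D}_{G}$ is exactly ``$A[X,X]$ is nonsingular'' (since $\mathcal{A}(G[X])=A[X,X]$), the statement is:
$(\star)$ for $B\subseteq S$, the columns of $A$ indexed by $B$ are linearly independent if and only if there is an $X$ with $B\subseteq X\subseteq S$ and $A[X,X]$ nonsingular. One direction is routine: if $A[X,X]$ is nonsingular then its columns are independent, so their restrictions to the rows $X$ are independent, whence the full columns of $A$ indexed by $B\subseteq X$ are independent.

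The hard part will be the forward direction of $(\star)$, and it is where the symmetry of $A$ and the characteristic-$2$ arithmetic are essential. First I would extend $B$ to a set $X$, with $B\subseteq X\subseteq S$, that is maximal subject to the columns of $A$ indexed by $X$ being independent; since the columns of $A$ are indexed by all of $S$, such an $X$ is a basis of the column space, so every column of $A$ lies in the span of the columns indexed by $X$. I then claim $A[X,X]$ is nonsingular. If not, choose $z\neq 0$ supported on $X$ with $A[X,X]z=0$; then $(Az)_{i}=0$ for every $i\in X$, while $Az\neq 0$ because the columns indexed by $X$ are independent, so $(Az)_{i_{0}}=1$ for some $i_{0}\in S\setminus X$. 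Since column $i_{0}$ lies in the span of the columns indexed by $X$, there is a $w\in\ker A$, supported on $X\cup\{i_{0}\}$, with $w_{i_{0}}=1$. Evaluating the scalar $(Az)^{\top}w$ two ways gives the contradiction: by symmetry $(Az)^{\top}w=z^{\top}Aw=0$ because $Aw=0$, whereas expanding coordinatewise leaves only the term $(Az)_{i_{0}}w_{i_{0}}=1$ (the coordinates in $X$ are killed by $(Az)_{i}=0$ and those outside $X\cup\{i_{0}\}$ by $w_{i}=0$). Thus $0=1$, forcing $A[X,X]$ nonsingular and proving $(\star)$.

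With $(\star)$ in hand the three parts are bookkeeping. Part~2 is $(\star)$ verbatim, since $\mathcal{I}(M_{A}(G[S]))$ is exactly the family of sets of independent columns of $A$. For Part~1: if $B$ is a basis of $M_{A}(G[S])$ then $B$ is independent, so $(\star)$ supplies a feasible $X$ with $B\subseteq X\subseteq S$ whose columns are independent; as $B$ is a maximal independent set and $B\subseteq X$ with $X$ independent, $X=B$, so $B\in\mathcal{D}_{G}$, and $B$ is inclusion-maximal among feasible subsets of $S$ because a feasible proper superset would be independent, contradicting that $B$ is a basis. Conversely, a maximal feasible $B\subseteq S$ has independent columns and is thus independent; were some $B\cup\{w\}$ independent, $(\star)$ would produce a feasible set properly containing $B$, a contradiction, so $B$ is a maximal independent set, i.e.\ a basis. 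Finally Part~3 follows from Part~1: the feasible sets of $G[S]$ are exactly the feasible sets of $G$ contained in $S$, and each such $B$ is, by Part~1 applied with $T=B$, the (maximal) member of $\mathcal{B}(M_{A}(G[B]))$, so it lies in $\bigcup_{T\subseteq S}\mathcal{B}(M_{A}(G[T]))$; conversely each member of that union is, by Part~1, a feasible set contained in $S$.
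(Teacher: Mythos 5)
Your proof is correct, and its overall architecture matches the paper's: everything comes down to the fact that in a symmetric matrix over $GF(2)$, a set of columns forming a basis of the column space indexes a nonsingular principal submatrix. That is precisely the strong principal minor theorem of Kodiyalam, Lam and Swan (Theorem \ref{spmt}), and the paper disposes of Theorem \ref{matdmat} in one sentence by citing it together with the observation $\mathcal{D}_{G-v}=\mathcal{D}_{G}-v$. Where you genuinely diverge is in how the key lemma is proved. The paper's argument is constructive: each column of $A$ outside the basis $X$ is a sum of columns indexed by $X$, so by symmetry each corresponding row is a sum of rows indexed by $X$, and deleting those rows from the $n\times r$ column submatrix preserves its row space and hence its rank. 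You instead argue by contradiction with an orthogonality computation: pairing $Az$ (for a putative kernel vector $z$ of $A[X,X]$) against a kernel vector $w$ of $A$ supported on $X\cup\{i_{0}\}$, and evaluating $(Az)^{\top}w=z^{\top}Aw$ two ways. This is a correct and genuinely different derivation of the same fact; it is cleanly phrased in terms of kernels and never runs into the characteristic-two pitfalls one might fear, since you only ever pair $Az$ against an element of $\ker A$. You also spell out the bookkeeping that deduces parts 1--3 from the lemma, including the reduction of part 3 to part 1 with $T=B$, all of which the paper leaves implicit. The net effect is a self-contained proof of comparable length whose only real novelty is an alternative, contradiction-based proof of Theorem \ref{spmt}.
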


Although Theorem \ref{matdmat} applies to arbitrary subsets $S\subseteq V(G)$,
the heart of the theorem is the result that part 1 holds for $S=V(G)$; as
noted by Brijder and Hoogeboom \cite{BH2}, this result is a special case of
the strong principal minor theorem of Kodiyalam, Lam and Swan \cite{K}.

In Sections \ref{sec:del_con_minmax} and \ref{sec:dmat_alt_proofs_mat} we
reprove Theorems~\ref{loopcontr} -- \ref{lcmat} within the contexts of set
systems and $\Delta$-matroids. In particular,\ Theorem~\ref{loopcontr} and
Theorem~\ref{deldel} are generalized to set systems and $\Delta$-matroids,
respectively. In a similar vein, Theorem 14 of \cite{BH2} shows that some
aspects of the principal vertex tripartition extend to matroids associated
with arbitrary $\Delta$-matroids.

In\ Section 12 we discuss the connection between the interlace polynomials
introduced by Arratia, Bollob\'{a}s and Sorkin \cite{A1, A2, A} and the Tutte
polynomials of the adjacency matroids of a graph and its full subgraphs. This
connection seems to be fundamentally different from the connection between the
one-variable interlace polynomial of a planar circle graph and the Tutte
polynomial of an associated checkerboard graph, discussed by Arratia,
Bollob\'{a}s and Sorkin \cite{A2} and Ellis-Monaghan and Sarmiento \cite{EMS}.

\section{Theorems \ref{bin} and \ref{Jae}}

For the convenience of the reader, in this section we provide proofs of
theorems of Ghouila-Houri \cite{GH} and Jaeger \cite{J1} mentioned in the introduction.

It is well known that axiom 4 of Definition \ref{binmat} may be replaced by
the following seemingly stronger requirement \cite{O, We, W2}:

4$^{\prime}$. If $C_{1},C_{2},...,C_{k}\in\mathcal{C}(M)$ do not sum to
$\varnothing$ in $2^{V}$ then there are pairwise disjoint $C_{1}^{\prime
},...,C_{k^{\prime}}^{\prime}\in\mathcal{C}(M)$ such that
\[%
{\displaystyle\sum\limits_{i=1}^{k}}
C_{i}=%
{\displaystyle\bigcup\limits_{i=1}^{k^{\prime}}}
C_{i}^{\prime}.
\]

This axiom is useful in the proof of Theorem \ref{bin}:

Let $M$ be a binary matroid on $V$. We can certainly construct $Z(M)$ from
$\mathcal{C}(M)$, using the addition of $2^{V}$. It turns out that we can also
construct $\mathcal{C}(M)$ from $Z(M)$:%
\[
\mathcal{C}(M)=\{\text{minimal nonempty subsets of }V\text{ that appear in
}Z(M)\}.
\]
The proof is simple: axiom 4$^{\prime}$ implies that every nonempty element of
$Z(M)$ contains a circuit, so every minimal nonempty element of $Z(M)$ is an
element of $\mathcal{C}(M)$; conversely, axiom 3 tells us that no circuit
contains another, so it is impossible for a circuit to contain a minimal
nonempty element of $Z(M)$ other than itself.

This implies that the function $Z$ is injective.

Now, let $W$ be any subspace of $2^{V}$. If $W$ $=$ $\{\varnothing\}$ then $W$
$=$ $Z(U)$, where $U$ is the free matroid on $V$ (i.e., $\mathcal{C}(U)$ $=$
$\varnothing$). Suppose $\dim W\geq1$, and let $\mathcal{C}(W)$ be the set of
minimal nonempty subsets of $V$ that appear in $W$. It is a simple matter to
verify that $\mathcal{C}(W)$ satisfies Definition \ref{binmat}; hence
$\mathcal{C}(W)$ is the circuit-set of a binary matroid $M(W)$. As
$\mathcal{C}(M(W))$ $=$ $\mathcal{C}(W)\subseteq W$, and $Z(M(W))$ is spanned
by $\mathcal{C}(M(W))$, $Z(M(W))$ is a subspace of $W$.

Could $Z(M(W))$ be a proper subspace of $W$? If so, then there is some $w\in
W$ that is not an element of $Z(M(W))$. By definition, $\mathcal{C}(W)$ must
include some element $\gamma$ that is a subset of $w$. Then $w+\gamma$ $=$
$w\Delta\gamma$ $=$ $w\backslash\gamma$ is also an element of $W-Z(M(W))$, and
its cardinality is strictly less than that of $w$. We deduce that $W-Z(M(W))$
does not have an element of smallest cardinality. This is ridiculous, so
$Z(M(W))$ cannot be a proper subspace of $W$.

It follows that the function $Z$ is surjective. $\blacksquare$

In light of Theorems \ref{bin} and \ref{iso} of Section 1, proving Theorem
\ref{Jae} is the same as proving the following.

\begin{proposition}
Let $A$ be a $k\times n$ matrix with entries in $GF(2)$. Then there is a
symmetric $n\times n$ matrix $B$ whose nullspace is the same as the right
nullspace of $A$.
\end{proposition}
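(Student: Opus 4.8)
The plan is to reduce to the case where $A$ has full row rank, and then take $B=A^{T}A$.

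First I would observe that the right nullspace $N=\{x\in GF(2)^{n}\mid Ax=0\}$ is unchanged both by elementary row operations on $A$ and by deleting a zero row, since each of these preserves the solution set of $Ax=0$. Hence we may replace $A$ by the matrix $A'$ obtained from $A$ by row-reducing and then discarding the resulting zero rows. Then $A'$ is an $r\times n$ matrix of full row rank $r$ (where $r=\operatorname{rank}A\leq n$), with the same right nullspace $N$ as $A$. It therefore suffices to produce a symmetric $n\times n$ matrix $B$ whose nullspace equals the right nullspace of $A'$.

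Second, I would set $B=(A')^{T}A'$, which is manifestly symmetric and $n\times n$. The containment $\ker A'\subseteq\ker B$ is automatic. For the reverse containment, the key point is that because $A'$ has full row rank $r$, its transpose $(A')^{T}$ is an $n\times r$ matrix of rank $r$, hence injective: its kernel lies in $GF(2)^{r}$ and has dimension $r-r=0$. Consequently $Bx=(A')^{T}(A'x)=0$ forces $A'x=0$, so $\ker B=\ker A'=N$, exactly as required.

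The step I expect to require the most care is the necessity of the full-row-rank reduction. Over $\mathbb{R}$ one has $\ker(A^{T}A)=\ker A$ for \emph{every} matrix $A$, because $x^{T}A^{T}Ax=(Ax)\cdot(Ax)$ vanishes only when $Ax=0$. Over $GF(2)$ this reasoning collapses: a nonzero vector can be orthogonal to itself, so $(Ax)\cdot(Ax)=0$ no longer forces $Ax=0$, and indeed $A^{T}A$ can have a strictly larger nullspace than $A$ when $A$ has dependent rows (already for a $2\times 2$ matrix with two equal rows, where $A^{T}A=0$). Passing first to full row rank is precisely what makes $(A')^{T}$ injective, which restores the identity $\ker\big((A')^{T}A'\big)=\ker A'$ and completes the argument.
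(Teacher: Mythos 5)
Your proof is correct and is essentially the paper's own argument: the paper's symmetric matrix $B'=\begin{pmatrix}I_{r} & C''\\ (C'')^{tr} & (C'')^{tr}C''\end{pmatrix}$ is exactly $(C')^{tr}C'$ for the full-row-rank echelon matrix $C'=(I_{r}\;\;C'')$, i.e.\ the same construction $B=(A')^{T}A'$ that you use. The only difference is cosmetic --- the paper permutes columns to expose an identity block and reads off the nullspace equality from that block structure, whereas you obtain it directly from the injectivity of $(A')^{T}$, which also lets you skip the paper's separate handling of the rank-$0$ and rank-$n$ cases.
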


\begin{proof}
If the right nullspace space of $A$ is $GF(2)^{n}$, the proposition is
satisfied by the zero matrix; if the right nullspace of $A$ is $\{\mathbf{0}%
\}$ then the proposition is satisfied by the identity matrix.

Otherwise, the right nullspace of $A$ is a proper subspace of $GF(2)^{n}$.
Using elementary row operations, we obtain from $A$ an $r\times n$ matrix $C$
in echelon form, which has the same right nullspace as $A$. (Here $r$ is the
rank of $A$.) There is a permutation $\pi$ of $\{1,...,n\}$ such that the
matrix obtained by permuting the columns of $C$ according to $\pi$ is of the
form%
\[
C^{\prime}=%
\begin{pmatrix}
I_{r} & C^{\prime\prime}%
\end{pmatrix}
\]
where $I_{r}$ is an identity matrix. If $(C^{\prime\prime})^{tr}$ denotes the
transpose of $C^{\prime\prime}$, then%
\[
B^{\prime}=%
\begin{pmatrix}
I_{r} & C^{\prime\prime}\\
(C^{\prime\prime})^{tr} & (C^{\prime\prime})^{tr}\cdot C^{\prime\prime}%
\end{pmatrix}
\]
is a symmetric matrix. $B^{\prime}$ has the same right nullspace as
$C^{\prime}$, for if $%
\begin{pmatrix}
I_{r} & C^{\prime\prime}%
\end{pmatrix}
\cdot\kappa$ $=$ $\mathbf{0}$ then certainly
\[%
\begin{pmatrix}
(C^{\prime\prime})^{tr} & (C^{\prime\prime})^{tr}\cdot C^{\prime\prime}%
\end{pmatrix}
\cdot\kappa=(C^{\prime\prime})^{tr}\cdot%
\begin{pmatrix}
I_{r} & C^{\prime\prime}%
\end{pmatrix}
\cdot\kappa=\mathbf{0.}%
\]
Consequently, a matrix $B$ satisfying the statement may be obtained by
permuting the rows and columns of $B^{\prime}$ according to $\pi^{-1}$.
\end{proof}

\section{Theorem \ref{corekerequiv}}

Our proof of Theorem \ref{Jae2} is rather different from Jaeger's original
argument \cite{J2}. We begin with the definition of \emph{touch-graphs}, which
appeared implicitly in Jaeger's later work \cite{J3} and were subsequently
discussed explicitly by Bouchet \cite{Bu4}. Recall that a \emph{trail} in a
graph is a walk which may include repeated vertices, but may not include
repeated edges. A closed trail is also called a \emph{circuit}; one such
circuit may contain another, so it is important to distinguish these circuits
from matroid circuits.

\begin{definition}
\label{touch} Let $F$ be a 4-regular graph. A \emph{circuit partition} or
\emph{Eulerian decomposition} of $F$ is a partition $P$ of the edge-set $E(F)$
into pairwise disjoint subsets, each of which is the edge-set of a closed
trail in $F$.
\end{definition}

\begin{definition}
If $P$ is a circuit partition of $F$ then the \emph{touch-graph} $Tch(P)$ is a
graph with a vertex for each element of $P$ and an edge for each vertex of
$F$; the edge corresponding to $v\in V(F)$ is incident on the vertex or
vertices corresponding to element(s) of $P$ incident at $v$.
\end{definition}

Observe that a walk $v_{1},e_{1},v_{2},...,e_{k-1},v_{k}$ in $F$ gives rise to
a walk in $Tch(P)$; for $1<i<k$ the edge of $Tch(P)$ corresponding to $v_{i}$
connects the vertex or vertices of $Tch(P)$ corresponding to the circuit(s) of
$P$ containing $e_{i-1}$ and $e_{i}$. Also, adjacent vertices of $Tch(P)$ must
correspond to circuits of $P$ contained in a single connected component of
$F$, because the edge of $Tch(P)$ connecting them corresponds to a vertex of
$F$ incident on both circuits. Consequently, there is a natural correspondence
between the connected components of $F$ and those of $Tch(P)$.

Recall that a connected 4-regular graph has at least one \emph{Euler circuit},
i.e., a closed trail that includes every edge. In general, a 4-regular graph
has at least one \emph{Euler system}, which includes one Euler circuit of each
connected component. Kotzig \cite{Kot} proved that if $P$ is a circuit
partition of a 4-regular graph $F$, then $F$ has an Euler system $C$ that
disagrees with $P$ at every vertex; that is, whenever one follows a circuit of
$C$ through a vertex, then one is not following a circuit of $P$. $C$ and $P$
are said to be $\emph{compatible}.$

An Euler system $C$ of a 4-regular graph $F$ has an associated
\emph{alternance graph }or \emph{interlacement graph} $\mathcal{I}(C)$,
defined as follows \cite{Bold, RR1}: The vertex-set of $\mathcal{I}(C)$ is
$V(F)$, and two vertices $v\neq w$ are adjacent in $\mathcal{I}(C)$ if and
only if there is some circuit of $C$ on which they appear in the order
$v...w...v...w$. We also use $\mathcal{I}(C)$ to denote the adjacency matrix
$\mathcal{A}(\mathcal{I}(C))$.

Suppose $P$ is a circuit partition of $F$, and $C$ is an Euler system.
Arbitrarily choose a preferred orientation of each circuit of $C$, and use
these orientations to direct the edges of $F$. Let $v\in V(F)$. Suppose we
choose a circuit of $P$ that is incident on $v$, and we walk toward $v$ on an
edge $e$ of this circuit that is in-directed according to the preferred
orientation. If we continue to follow this circuit of $P$, how do we leave
$v$? There are three possibilities: ($\phi$) we leave on the out-directed edge
we would use if we were following the incident circuit of $C$, ($\chi$) we
leave on the out-directed edge we would not use if we were following the
incident circuit of $C$, or ($\psi$) we leave on the in-directed edge we did
not use before.

Observe that if we were to choose the other in-directed edge instead of $e$,
or if we were to choose the other orientation of the incident circuit of $C$,
the $\phi$, $\chi$, $\psi$ designation would be the same. This designation is
called the \emph{transition} of $P$ at $v$; clearly $P$ is determined by its
transitions, and each of the $3^{\left\vert V(F)\right\vert }$ systems of
choices of transitions yields a circuit partition of $F$.

\begin{definition}
\label{mint} Under these circumstances, the \emph{relative interlacement graph
}$\mathcal{I}_{P}(C)$ is obtained from $\mathcal{I}(C)$ by removing each
vertex of type $\phi$, and attaching a loop to each vertex of type $\psi$.
\end{definition}

We also use $\mathcal{I}_{P}(C)$ to denote the adjacency matrix $\mathcal{A}%
(\mathcal{I}_{P}(C))$. An important property of $\mathcal{I}_{P}(C)$ is the
\emph{circuit-nullity formula}:

\begin{theorem}
\label{thmmint} If $F$ has $c(F)$ connected components then
\[
\nu(\mathcal{I}_{P}(C))=\left\vert P\right\vert -c(F)\text{,}%
\]
where $\nu$ denotes the $GF(2)$-nullity.
\end{theorem}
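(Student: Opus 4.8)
The plan is to prove the circuit-nullity formula $\nu(\mathcal{I}_{P}(C)) = |P| - c(F)$ by relating the nullity of the relative interlacement matrix to the combinatorial structure of the circuit partition. The key observation is that $\nu(\mathcal{I}_P(C))$ measures a deficiency in rank, and $|P| - c(F)$ counts the ``extra'' circuits beyond the minimum forced by connectivity. I would first reduce to the connected case: since both $\mathcal{I}_P(C)$ and the quantity $|P|-c(F)$ decompose as direct sums over the connected components of $F$ (each component has its own Euler circuit in $C$, its own induced circuit partition, and the interlacement matrix is block-diagonal across components), it suffices to prove $\nu(\mathcal{I}_P(C)) = |P| - 1$ when $F$ is connected.

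**Next I would** set up an inductive or incremental argument on the transitions. Recall that $P$ is determined by its transitions ($\phi$, $\chi$, $\psi$) at each vertex, and that the Euler system $C$ corresponds to choosing all transitions to agree with $C$ in a specific way. The natural strategy is to start from a baseline partition (say, $C$ itself, viewed as a circuit partition, where appropriately all transitions are of one type) and track how $|P|$ and $\nu(\mathcal{I}_P(C))$ change as we flip the transition at a single vertex $v$. Flipping a transition corresponds to a local surgery on the circuit partition --- it either splits one circuit of $P$ into two or merges two circuits into one, changing $|P|$ by exactly $\pm 1$. Simultaneously, the surgery modifies $\mathcal{I}_P(C)$ by a single-vertex operation: toggling whether $v$ is present, and (when present) toggling the loop at $v$, together with the induced changes via local complementation at $v$ on its neighbors. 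I expect the matrix change at each step to be a rank-one-type modification whose effect on nullity is $\pm 1$.

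**The hard part will be** establishing that the parity and direction of the change in $\nu(\mathcal{I}_P(C))$ exactly matches the change in $|P|$ at each transition flip. This requires a careful linear-algebra analysis of how the $GF(2)$-nullity of the adjacency matrix responds to (a) deleting/adjoining a vertex of type $\phi$, and (b) toggling a loop at a vertex of type $\psi$, while the local complementation bookkeeping rearranges the off-diagonal entries. The three transition types $\phi,\chi,\psi$ must be handled as distinct cases, since each prescribes a different presence/loop status for $v$ in $\mathcal{I}_P(C)$, and the matroid-theoretic dictionary developed earlier (Theorems \ref{loopcontr}--\ref{lcmat}, which describe precisely how nullity shifts under pivot/local-complement operations) should supply the needed nullity accounting. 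The principal difficulty is bookkeeping: one must verify that ``$P$ gains a circuit'' aligns with ``$\nu$ increases by one'' across all transition changes simultaneously, which amounts to checking a consistent sign convention between the topological splitting/merging of closed trails and the algebraic pivot operations on the adjacency matrix over $GF(2)$.

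**Alternatively,** rather than an incremental argument one could seek a direct bijective or dimension-counting proof: exhibit an explicit isomorphism between the $GF(2)$-nullspace of $\mathcal{I}_P(C)$ and a space of cycles in $F$ whose dimension is visibly $|P|-c(F)$, for instance by interpreting null vectors as edge-subsets that are closed under the transition structure of $P$. I would pursue the incremental approach as primary, since the local-complementation machinery of the earlier sections is tailor-made for tracking single-vertex nullity changes, and fall back on the direct cycle-space interpretation only if the parity bookkeeping proves intractable.
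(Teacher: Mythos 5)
The paper does not actually prove Theorem \ref{thmmint}; it is quoted as a known result, with proofs deferred to \cite{Tbn, T3, T5} and historical antecedents in \cite{Br} and \cite{J3}. So the comparison here is between your plan and the genuine content of the circuit-nullity formula, and on that score your proposal has a real gap: the entire mathematical substance of the theorem is contained in the step you label ``the hard part,'' namely showing that the transition choice at $v$ which increases $\left\vert P\right\vert$ by one is exactly the choice which increases $\nu(\mathcal{I}_{P}(C))$ by one. The reduction to connected $F$, the choice of baseline (taking $P=C$, where $\mathcal{I}_{P}(C)$ is empty and $\left\vert P\right\vert=c(F)$), and the observation that Theorem \ref{trio} gives a ``two equal, one larger by one'' pattern for the three nullities corresponding to the transitions $\phi$, $\chi$, $\psi$ are all sound scaffolding; but matching that pattern against the analogous ``two equal, one larger by one'' pattern for the three values of $\left\vert P\right\vert$ requires an actual combinatorial argument connecting interlacement of the Euler circuit to the splitting and merging of closed trails, and no such argument is sketched. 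Deferring it to ``the matroid-theoretic dictionary developed earlier'' is circular in spirit, since Lemma \ref{ranklem} and its relatives in Section 3 are themselves \emph{deduced from} the circuit-nullity formula.

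Two subsidiary claims in your plan are also incorrect as stated. First, flipping the transition at a single vertex does not always change $\left\vert P\right\vert$ by $\pm 1$: if a single circuit of $P$ traverses $v$ twice, one of the two alternative transitions splits it and the other merely re-routes it, so the change can be $0$. (This is exactly the content of Figure \ref{merge}, and it is why the correct local statement is that among the three transitions, two give the same $\left\vert P\right\vert$ and one gives a value larger by one.) Second, changing $P$ does not induce any local complementation on the neighbors of $v$: the off-diagonal entries of $\mathcal{I}_{P}(C)$ are determined by the fixed Euler system $C$, and only the presence and loop status of $v$ itself vary with the transition of $P$ at $v$. Local complementation enters when one changes $C$ (to $C\ast v$), not when one changes $P$. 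With these corrections your incremental framework is viable --- it is close in spirit to the proofs in the cited references --- but as written the proposal does not constitute a proof.
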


We refer to \cite{Tbn, T3, T5} for discussions and proofs of the
circuit-nullity formula. These references are recent, but special cases of
Theorem \ref{thmmint} have been known for almost 100 years; the earliest of
these special cases seems to be the one that appears in Brahana's study of
systems of curves on surfaces \cite{Br}. Jaeger \cite{J3} proved the special
case of the circuit-nullity formula in which $C$ and $P$ are compatible.

Observe that the rank of the polygon matroid of $Tch(P)$ is $\left\vert
V(Tch(P))\right\vert -c(Tch(P))$, which equals $\left\vert P\right\vert
-c(F)$. The circuit-nullity formula tells us that this rank equals
$\nu(\mathcal{I}_{P}(C))$. In case $C$ and $P$ are compatible, Jaeger
sharpened this equality by identifying the nullspace of $\mathcal{I}_{P}(C)$:

\begin{theorem}
\label{coreker} \cite{J3} Let $F$ be a 4-regular graph with a circuit
partition $P$ and a compatible Euler system $C.$ Then the nullspace of
$\mathcal{I}_{P}(C)$ and the cycle space of $Tch(P)$ are orthogonal
complements in $GF(2)^{V(F)}$.
\end{theorem}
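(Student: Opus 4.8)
The plan is to split Theorem~\ref{coreker} into a dimension count and a single orthogonality inclusion, and to obtain the orthogonality by checking that the cuts of $Tch(P)$ are null vectors of $\mathcal{I}_{P}(C)$. First I would dispose of the dimensions. The cycle space $Z(Tch(P))$ lives in $GF(2)^{E(Tch(P))}=GF(2)^{V(F)}$ and has dimension $|E(Tch(P))|-|V(Tch(P))|+c(Tch(P))=|V(F)|-|P|+c(F)$, using the natural correspondence of connected components noted above (so $c(Tch(P))=c(F)$). By the circuit-nullity formula (Theorem~\ref{thmmint}), $\dim(\text{null}\,\mathcal{I}_{P}(C))=\nu(\mathcal{I}_{P}(C))=|P|-c(F)$. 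Hence the two subspaces have dimensions summing to $|V(F)|$, exactly as a pair of orthogonal complements must; this is precisely the numerical coincidence already recorded just before the theorem. Consequently it is enough to prove that one subspace is contained in the orthogonal complement of the other, since the dimension count then forces equality.

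For the orthogonality I would work with the incidence matrix $B$ of $Tch(P)$, a $|P|\times|V(F)|$ matrix over $GF(2)$ whose row indexed by $p\in P$ is the vertex star $\{v\in V(F):p\text{ passes through }v\text{ exactly once}\}$; note that a vertex at which all four edges lie on $p$ yields a loop of $Tch(P)$ and so contributes $0$ to row $p$. Since the cut space of $Tch(P)$ is the row space of $B$ and equals $Z(Tch(P))^{\perp}$, it suffices to show that every row of $B$ lies in $\text{null}\,\mathcal{I}_{P}(C)$, i.e.\ that $B\,\mathcal{I}_{P}(C)=0$. Combined with the dimension count this gives $\text{null}\,\mathcal{I}_{P}(C)=Z(Tch(P))^{\perp}$, which is the assertion.

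The heart of the matter is thus the identity $B\,\mathcal{I}_{P}(C)=0$: for each circuit $p\in P$ and each vertex $v\in V(F)$, the number of vertices $w$ through which $p$ passes exactly once and which are interlaced with $v$ on $C$, corrected by $1$ when $v$ is itself a once-traversed vertex of $p$ carrying a $\psi$-loop, must be even. I would prove this by traversing $p$ and analysing, vertex by vertex, how the chords recording interlacement with $v$ are entered and left, using compatibility (which forces every transition of $P$ to be of type $\chi$ or $\psi$, never $\phi$) to pin down the parity with which $p$ crosses the two $C$-arcs cut out by $v$. This parity bookkeeping is the main obstacle, and it is exactly where the compatibility hypothesis and the placement of loops on $\psi$-vertices in Definition~\ref{mint} enter. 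An alternative route to the same identity would be an induction that alters the transition of $P$ at a single vertex at a time, tracking the simultaneous effect on $\nu(\mathcal{I}_{P}(C))$ and on $Z(Tch(P))$; either way, once $B\,\mathcal{I}_{P}(C)=0$ is in hand the theorem follows from the dimension count.
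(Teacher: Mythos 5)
Your reduction is sound: the dimension count via Theorem~\ref{thmmint} (which the paper itself records just before the theorem) correctly shows that $\dim Z(Tch(P)) + \nu(\mathcal{I}_{P}(C)) = |V(F)|$, so the theorem would indeed follow from the single inclusion ``cut space of $Tch(P)$ $\subseteq$ nullspace of $\mathcal{I}_{P}(C)$,'' i.e.\ from the identity $B\,\mathcal{I}_{P}(C)=0$ where $B$ is the incidence matrix of $Tch(P)$. The problem is that this identity is the entire content of the theorem, and you do not prove it. What you offer in its place is a description of a proof you ``would'' give --- traverse $p$, track how the chords interlaced with $v$ are entered and left, and ``pin down the parity'' using compatibility --- together with the admission that this bookkeeping is ``the main obstacle.'' Concretely, you would need to show: for every circuit $p\in P$ and every $v\in V(F)$, the number of vertices $w$ through which $p$ passes exactly once and for which $(\mathcal{I}_{P}(C))_{wv}=1$ is even. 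Nothing in the proposal establishes this parity statement, and it is not routine; it is exactly where the interlacement structure of $C$, the compatibility hypothesis, and the $\psi$-loops of Definition~\ref{mint} must all be used. As written, the proposal is a correct framing plus an unproven lemma.

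It is worth noting that the paper sidesteps this computation entirely. It proves the equivalent matroid form (Theorem~\ref{corekerequi}) by characterizing the independent sets of both matroids through the single rank condition $\rho(\mathcal{I}_{P}(C)-X)=\rho(\mathcal{I}_{P}(C))$: Lemma~\ref{ranklem} and Corollary~\ref{rankcor} establish, by induction on $|X|$ and by changing one transition at a time while applying the circuit-nullity formula, that this condition is equivalent to $X$ being independent in the polygon matroid of $Tch(P)$; the concluding Proposition uses the strong principal minor theorem (Theorem~\ref{spmt}) to show it is also equivalent to $X$ being independent in $M_{A}(\mathcal{I}_{P}(C))^{\ast}$. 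Your parenthetical ``alternative route'' (altering one transition at a time and tracking $\nu(\mathcal{I}_{P}(C))$ against $Tch(P)$) is essentially this argument, and if you want a complete proof you should develop that route rather than the direct parity count, since the circuit-nullity formula does the hard combinatorial work for you there.
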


Theorem \ref{coreker} suffices for our present purposes, but we might mention
that a modified version of the result holds in the general (non-compatible)
case; see \cite{T5}.

In view of Theorem \ref{bin}, Theorem \ref{coreker} is equivalent to the
following precise version of Theorem \ref{corekerequiv}:

\begin{theorem}
\label{corekerequi}Let $F$ be a 4-regular graph with a circuit partition $P$
and a compatible Euler system $C.$ Then $M_{A}(\mathcal{I}_{P}(C))^{\ast}$ is
the polygon matroid of $Tch(P)$.
\end{theorem}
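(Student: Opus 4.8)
The plan is to obtain the matroid equality by transporting Jaeger's linear-algebraic result (Theorem \ref{coreker}) across the cycle-space dictionary supplied by Theorem \ref{bin}. First I would pin down the common ground set. Since $C$ and $P$ are compatible, no vertex of $F$ has transition type $\phi$, so the passage from $\mathcal{I}(C)$ to $\mathcal{I}_{P}(C)$ in Definition \ref{mint} deletes nothing and only attaches loops at the $\psi$-vertices; hence $V(\mathcal{I}_{P}(C))=V(F)$. On the other side, the edges of $Tch(P)$ are by definition indexed by the vertices of $F$, so the polygon matroid of $Tch(P)$ also lives on $V(F)$. Identifying $E(Tch(P))$ with $V(F)$ in this canonical way, both $M_{A}(\mathcal{I}_{P}(C))^{\ast}$ and the polygon matroid of $Tch(P)$ become binary matroids on the same finite set $V(F)$, with $2^{V(F)}$ identified with $GF(2)^{V(F)}$.

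By Theorem \ref{bin}, two binary matroids on $V(F)$ coincide if and only if their cycle spaces are equal subspaces of $2^{V(F)}$, so it suffices to compare cycle spaces. The cycle space of the polygon matroid of $Tch(P)$ is, by definition, the ordinary $GF(2)$-cycle space of the graph $Tch(P)$. For the other matroid I would invoke comment 2 following Definition \ref{mat}, which identifies $Z(M_{A}(\mathcal{I}_{P}(C)))$ with the nullspace of the matrix $\mathcal{I}_{P}(C)$, together with the identity $Z(M^{\ast})=Z(M)^{\bot}$. This gives
\[
Z\bigl(M_{A}(\mathcal{I}_{P}(C))^{\ast}\bigr)=Z\bigl(M_{A}(\mathcal{I}_{P}(C))\bigr)^{\bot}=\bigl(\text{nullspace of }\mathcal{I}_{P}(C)\bigr)^{\bot}.
\]

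Theorem \ref{coreker} asserts exactly that the nullspace of $\mathcal{I}_{P}(C)$ and the cycle space of $Tch(P)$ are orthogonal complements in $GF(2)^{V(F)}$, so the displayed right-hand side equals the cycle space of $Tch(P)$. Thus the two matroids have the same cycle space, and Theorem \ref{bin} forces them to be equal. Essentially all of the mathematical content is carried by Theorem \ref{coreker}; the only genuine thing to check is the bookkeeping, namely that compatibility is precisely what guarantees $V(\mathcal{I}_{P}(C))=V(F)$, so that the two matroids share a ground set and the orthogonal-complement relation of Theorem \ref{coreker} can legitimately be read as an equality of cycle spaces. That ground-set matching is the one place where a careless treatment of the $\phi$-vertices would break the correspondence, so I expect it to be the main (if modest) obstacle.
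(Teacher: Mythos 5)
Your argument is correct, but it is not the proof the paper actually gives. What you have written out carefully is the one-sentence remark the paper makes just before stating the theorem, namely that ``in view of Theorem \ref{bin}, Theorem \ref{coreker} is equivalent to'' Theorem \ref{corekerequi}: you identify the ground sets (correctly observing that compatibility rules out $\phi$-type vertices, so $V(\mathcal{I}_{P}(C))=V(F)=E(Tch(P))$), pass to cycle spaces via the injectivity of $Z$, use $Z(M^{\ast})=Z(M)^{\bot}$ and the identification of $Z(M_{A}(G))$ with the nullspace of $\mathcal{A}(G)$, and then let Jaeger's Theorem \ref{coreker} do all the work. That is a legitimate proof, but it outsources the entire mathematical content to the cited result from \cite{J3}. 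The paper instead sketches a self-contained argument that does not invoke Theorem \ref{coreker} at all: it compares \emph{independent sets} rather than cycle spaces, showing via the circuit-nullity formula (Lemma \ref{ranklem} and Corollary \ref{rankcor}) that $X$ is independent in the polygon matroid of $Tch(P)$ if and only if $\rho(\mathcal{I}_{P}(C)-X)=\rho(\mathcal{I}_{P}(C))$, and then using the strong principal minor theorem to show this rank condition characterizes independence in $M_{A}(\mathcal{I}_{P}(C))^{\ast}$. The paper's route is longer but buys a proof of Jaeger's duality from more primitive ingredients, plus intermediate results (notably Lemma \ref{ranklem} and Corollary \ref{rankcor}) that are stated and proved without the compatibility hypothesis and are reused later in the paper; your route buys brevity, at the price of being only as strong as the black-boxed Theorem \ref{coreker}.
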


We sketch a quick proof of Theorem \ref{corekerequi}, which uses the
circuit-nullity formula.

\begin{center}%
\begin{figure}
[ptb]
\begin{center}
\includegraphics[
height=2.687in,
width=4.2765in
]%
{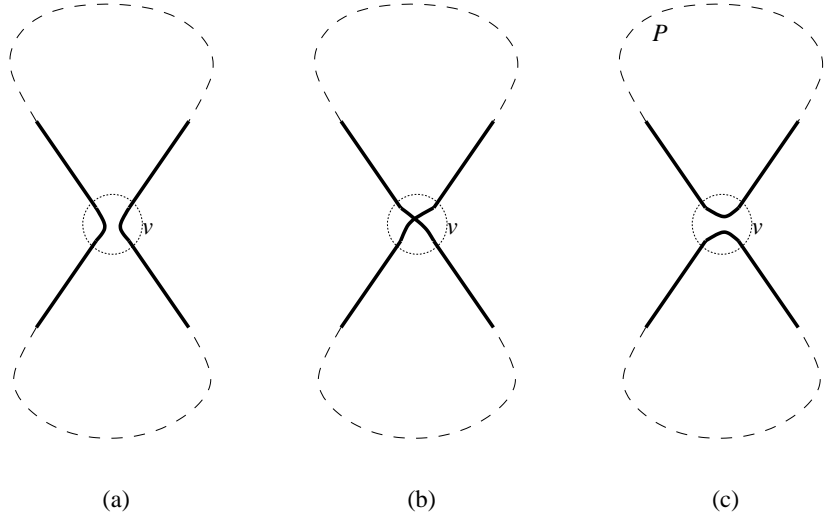}%
\caption{Given choices of transitions at other vertices, two of the three
transitions at $v$ produce touch-graphs in which $v$ is a loop.}%
\label{merge}%
\end{center}
\end{figure}

\end{center}

\begin{lemma}
\label{ranklem}Suppose $F$ is a 4-regular graph with a circuit partition $P$
and an Euler system $C$, and suppose $x$ is a vertex of $\mathcal{I}_{P}(C)$.
Two different circuits of $P$ are incident at $x$ if and only if
$\rho(\mathcal{I}_{P}(C))=\rho(\mathcal{I}_{P}(C)-x)$.
\end{lemma}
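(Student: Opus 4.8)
The plan is to recognize the deletion $\mathcal{I}_P(C)-x$ as the relative interlacement graph of a slightly altered circuit partition, and then to read off both ranks from the circuit-nullity formula (Theorem~\ref{thmmint}). First I would let $P'$ be the circuit partition agreeing with $P$ at every vertex except $x$, where $P'$ takes the transition of type $\phi$ (the one that follows $C$). Changing the single transition at $x$ does not affect the $\phi/\chi/\psi$ type of any other vertex, nor does it affect any adjacency of $\mathcal{I}(C)$, which depends only on $C$; and since $x$ is of type $\phi$ in $P'$, it is removed in passing to $\mathcal{I}_{P'}(C)$. Hence $\mathcal{I}_{P'}(C)=\mathcal{I}_P(C)-x$, and in particular $\rho(\mathcal{I}_P(C)-x)=\rho(\mathcal{I}_{P'}(C))$.

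Next I would apply Theorem~\ref{thmmint} to both $P$ and $P'$. Writing $n$ for the number of vertices of $\mathcal{I}_P(C)$ and using $\rho=n-\nu$, the identities $\nu(\mathcal{I}_P(C))=|P|-c(F)$ and $\nu(\mathcal{I}_{P'}(C))=|P'|-c(F)$ reduce the desired rank equality to the single numerical condition $|P|=|P'|+1$. It then remains to match this count with the incidence condition. Fixing all transitions except at $x$, the rest of $F$ reconnects the four half-edges at $x$ into a pairing $R$, and for a transition $\tau$ at $x$ the circuits through $x$ are exactly the cycles of the $2$-regular multigraph $R\cup\tau$ on those four half-edges: two cycles when $\tau=R$ and one cycle otherwise, while circuits avoiding $x$ are the same for all three transitions. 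Thus the transition equal to $R$ yields one more circuit than either of the other two. Since $x$ is a vertex of $\mathcal{I}_P(C)$ its $P$-transition $\tau_P$ is of type $\chi$ or $\psi$, so $\tau_P\neq\phi$; consequently $|P|=|P'|+1$ holds precisely when $\tau_P=R$. Finally, $\tau_P=R$ is exactly the statement that two different circuits of $P$ are incident at $x$ (the two strands at $x$ then close up separately, whereas for $\tau_P\neq R$ a single circuit visits $x$ twice), which yields the equivalence.

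The arithmetic of the first two steps is routine once the identification $\mathcal{I}_{P'}(C)=\mathcal{I}_P(C)-x$ is in hand. The step needing the most care is the local analysis at $x$: one must verify that switching a single transition changes the circuit count by exactly one and, more sharply, that the transition coinciding with the external pairing $R$ is the unique one placing $x$ on two distinct circuits. This bookkeeping of how the four half-edges reconnect is the crux of the argument, and it is precisely where the graph-theoretic meaning of ``two different circuits incident at $x$'' enters.
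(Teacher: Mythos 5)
Your proof is correct and follows essentially the same route as the paper's: both define $P'$ by switching the transition at $x$ to the one following $C$, identify $\mathcal{I}_{P'}(C)$ with $\mathcal{I}_P(C)-x$, and translate the rank condition into $|P|=|P'|+1$ via the circuit-nullity formula. The only difference is that you spell out the local half-edge analysis at $x$ explicitly, where the paper appeals to Figure~\ref{merge}.
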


\begin{proof}
Let $P^{\prime}$ be the circuit partition obtained from $P$ by changing the
\emph{transition} at $x$: the circuit of $P^{\prime}$ incident at $x$ follows
the circuit of $C$ incident at $x$.

If two different circuits of $P$ are incident at $x$, then $P^{\prime}$ is
obtained from $P$ by uniting these two circuits. (See Figure \ref{merge}.) It
follows from the circuit-nullity formula that $\nu(\mathcal{I}_{P^{\prime}%
}(C))=\nu(\mathcal{I}_{P}(C))-1$. As $\mathcal{I}_{P^{\prime}}(C)$ is
$\mathcal{I}_{P}(C)-x$, we conclude that $\rho(\mathcal{I}_{P}(C))=\rho
(\mathcal{I}_{P}(C)-x)$. Conversely, if $\rho(\mathcal{I}_{P}(C))=\rho
(\mathcal{I}_{P}(C)-x)$ then $\nu(\mathcal{I}_{P}(C))=1+\nu(\mathcal{I}%
_{P^{\prime}}(C))$, so the circuit-nullity formula tells us that $\left\vert
P\right\vert =1+\left\vert P^{\prime}\right\vert $; consequently two different
circuits of $P$ are incident at $x$.
\end{proof}

Recall that an \emph{independent set} of a matroid is a set that contains no circuit.

\begin{corollary}
\label{rankcor}Let $F$ be a 4-regular graph with a circuit partition $P$ and
an Euler system $C$.\ Suppose $k\geq1$ and $X=\{x_{1},...,x_{k}\}\subseteq
V(\mathcal{I}_{P}(C))$. For $1\leq i\leq k$ let $P_{i}$ be the circuit
partition that involves the same transition as $C$ at each of $x_{1}%
,...,x_{i}$, and the same transition as $P$ at every other vertex. Then the
following are equivalent:

\begin{enumerate}
\item $X$ is an independent set of the polygon matroid of $Tch(P).$

\item $\rho(\mathcal{I}_{P}(C)-X)=\rho(\mathcal{I}_{P}(C))$.

\item $\left\vert P_{i}\right\vert =\left\vert P\right\vert -i$ for each
$i\geq1$.
\end{enumerate}
\end{corollary}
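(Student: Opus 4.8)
The plan is to prove the two equivalences $2\Leftrightarrow 3$ and $1\Leftrightarrow 3$, treating condition 3 as the combinatorial hub linking the rank statement (2) to the matroidal statement (1). The observation I would record first is the identity $\mathcal{I}_{P_i}(C)=\mathcal{I}_P(C)-\{x_1,\dots,x_i\}$. Indeed, $P_i$ agrees with $C$ at exactly $x_1,\dots,x_i$, so those vertices are of type $\phi$ with respect to $P_i$ and are deleted from the relative interlacement graph, whereas at every other vertex $P_i$ agrees with $P$, so the loops are unchanged and the off-diagonal entries (which depend only on $C$) are unchanged as well. Writing $X_i=\{x_1,\dots,x_i\}$, this gives $\mathcal{I}_{P_i}(C)=\mathcal{I}_P(C)-X_i$, and in particular $X=X_k$.

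For $2\Leftrightarrow 3$ I would apply the circuit-nullity formula (Theorem \ref{thmmint}) to both $P$ and $P_i$, obtaining $|P_i|-|P|=\nu(\mathcal{I}_P(C)-X_i)-\nu(\mathcal{I}_P(C))$. Since $\mathcal{I}_P(C)-X_i$ is $(n-i)\times(n-i)$ while $\mathcal{I}_P(C)$ is $n\times n$, converting nullities to ranks shows that $|P_i|=|P|-i$ holds if and only if $\rho(\mathcal{I}_P(C)-X_i)=\rho(\mathcal{I}_P(C))$. Because each $\mathcal{I}_P(C)-X_i$ is a principal submatrix of its predecessor, the ranks $\rho(\mathcal{I}_P(C))\geq\rho(\mathcal{I}_P(C)-X_1)\geq\cdots\geq\rho(\mathcal{I}_P(C)-X_k)$ form a non-increasing chain; hence the equality for all $i$ (condition 3) is equivalent to the single equality at $i=k$ (condition 2).

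For $1\Leftrightarrow 3$ I would begin with a parity reduction. Passing from $P_{i-1}$ to $P_i$ changes the transition at $x_i$ to follow $C$, which (by the analysis in the proof of Lemma \ref{ranklem}) either unites two circuits, a \emph{merge} with $|P_i|=|P_{i-1}|-1$, or splits one circuit in two, with $|P_i|=|P_{i-1}|+1$. As the number of steps is (merges)$+$(splits) while $|P|-|P_i|$ is (merges)$-$(splits), condition 3 is equivalent to the assertion that every one of the first $k$ steps is a merge. The remaining task is to identify ``all merges'' with ``$X$ is independent in the polygon matroid of $Tch(P)$,'' i.e.\ $X$ is acyclic. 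I would prove by induction on $i$ (valid as long as no split has yet occurred) that the circuits of $P_i$ are exactly the connected components of the subgraph $(V(Tch(P)),X_i)$: the step at $x_i$ merges two circuits of $P_{i-1}$ precisely when the two endpoints of $x_i$ in $Tch(P)$ lie in different components of $(V(Tch(P)),X_{i-1})$, which is exactly the union-find criterion for $x_i$ not to close a cycle. Thus every step is a merge if and only if no $x_i$ closes a cycle, that is, if and only if $X$ is independent.

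The step I expect to require the most care is this inductive correspondence between the evolving circuit partition and connectivity in the touch-graph. One must verify that the circuit(s) of $P_{i-1}$ incident at $x_i$ are exactly the component(s) of $(V(Tch(P)),X_{i-1})$ containing the endpoints of $x_i$, and that a merge corresponds exactly to those endpoints lying in distinct components, while a split---including the degenerate case in which $x_i$ is a loop of $Tch(P)$---corresponds to their lying in the same component. This combines the touch-graph incidence definition with the merge/split dichotomy from Lemma \ref{ranklem}, and it is the point at which the geometry of the transitions must be handled precisely; the rank-chain and circuit-nullity arguments for $2\Leftrightarrow 3$ are comparatively routine.
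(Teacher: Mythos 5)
Your proof is correct, and its core ingredients are the same as the paper's: the identity $\mathcal{I}_{P_i}(C)=\mathcal{I}_P(C)-X_i$, the circuit-nullity formula, Lemma \ref{ranklem}'s merge criterion, and an induction tracking how the touch-graph evolves as transitions are switched one at a time (your ``components of $(V(Tch(P)),X_i)$'' formulation is just the contraction description $Tch(P)/X_i$ used in the paper, read off component by component). The organizational difference is worth noting: the paper runs a single induction on $k$ that carries all three conditions along simultaneously and derives $1\Rightarrow2\Rightarrow3\Rightarrow1$ at each stage, whereas you decouple the two equivalences and, in particular, get $2\Leftrightarrow3$ non-inductively from the circuit-nullity formula together with the monotonicity of rank under passing to principal submatrices. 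That monotonicity observation is a genuine simplification of that half of the argument. One small imprecision: changing a single transition does not always merge or split --- when only one circuit of $P_{i-1}$ passes through $x_i$, the switch may leave $|P_{i-1}|$ unchanged (a re-routing), so the dichotomy you attribute to Lemma \ref{ranklem} is really a trichotomy $|P_i|-|P_{i-1}|\in\{-1,0,+1\}$. This is harmless for your argument, since condition 3 literally asserts $|P_i|-|P_{i-1}|=-1$ at every step and Lemma \ref{ranklem} (via the circuit-nullity formula) characterizes exactly the $-1$ case as the two-circuit case; you do not actually need the parity bookkeeping at all.
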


\begin{proof}
If $X=\varnothing$ then all three properties hold, and if $\left\vert
X\right\vert =1$ then their equivalence follows from Lemma \ref{ranklem} and
the fact that $x_{1}$ is a non-loop edge of $Tch(P)$ if and only if two
different circuits of $P$ are incident there.

We proceed by induction on $\left\vert X\right\vert =k>1$. Observe that
$\mathcal{I}_{P_{i}}(C)=\mathcal{I}_{P}(C)-\{x_{1},...,x_{i}\}$ for each $i>0$.

If any one of the three properties fails for $\{x_{1},...,x_{k-1}\}$ then by
induction, all three fail for $\{x_{1},...,x_{k-1}\}$; clearly then all three
also fail for $X$.

Suppose all three properties hold for $\{x_{1},...,x_{k-1}\}$. Property 3
implies that if $1\leq i<k$, then $Tch(P_{i})$ is obtained from $Tch(P)$ by
first contracting the edges corresponding to $x_{1},...,x_{i}$, and then
attaching loops to the vertices of $Tch(P)/\{x_{1},...,x_{i}\}$ corresponding
to $x_{1},...,x_{i}$.

If property 1 holds for $X$ then $x_{k}$ is a non-loop edge in $Tch(P)/\{x_{1}%
,...,x_{k-1}\}$, and hence also in $Tch(P_{k-1})$. That is, two different
circuits of $P_{k-1}$ are incident at $x_{k}$. Lemma \ref{ranklem} then
implies that $\rho(\mathcal{I}_{P_{k-1}}(C)-x_{k})=\rho(\mathcal{I}_{P_{k-1}%
}(C))$; this equals $\rho(\mathcal{I}_{P}(C))$ because property 2 holds for
$\{x_{1},...,x_{k-1}\}$. As $\mathcal{I}_{P_{k-1}}(C)-x_{k}=\mathcal{I}%
_{P}(C)-X$, property 2 holds for $X$.

If property 2 holds for $X$ then as it also holds for $\{x_{1},...,x_{k-1}\}$,
$\rho(\mathcal{I}_{P_{k-1}}(C)-x_{k})=\rho(\mathcal{I}_{P_{k-1}}(C))$. Lemma
\ref{ranklem} tells us that two different circuits of $P_{k-1}$ are incident
at $x_{k}$, so $\left\vert P_{k}\right\vert =\left\vert P_{k-1}\right\vert
-1$. As property 3 holds for $\{x_{1},...,x_{k-1}\}$, it also holds for $X$.

Finally, if property 3 holds for $X$ then $\left\vert P_{k}\right\vert
=\left\vert P_{k-1}\right\vert -1$, so two different circuits of $P_{k-1}$ are
incident at $x_{k}$. That is, the edge of $Tch(P_{k-1})$ corresponding to
$x_{k}$ is not a loop. It follows that the edge of $Tch(P)$ corresponding to
$x_{k}$ is not a loop in $Tch(P)/\{x_{1},...,x_{k-1}\}$. As property 1 holds
for $\{x_{1},...,x_{k-1}\}$, it follows that $X$ is independent in $Tch(P)$.
\end{proof}

The last step of the proof of Theorem \ref{corekerequi} is the following.

\begin{proposition}
Let $F$ be a 4-regular graph with a circuit partition $P$ and an Euler system
$C$.\ Suppose $X=\{x_{1},...,x_{k}\}\subseteq V(\mathcal{I}_{P}(C))$. Then
$\rho(\mathcal{I}_{P}(C)-X)=\rho(\mathcal{I}_{P}(C))$ if and only if $X$ is an
independent set of $M_{A}(\mathcal{I}_{P}(C))^{\ast}$.
\end{proposition}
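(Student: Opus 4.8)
The plan is to translate both sides of the asserted equivalence into statements about ranks of submatrices of the symmetric matrix $A=\mathcal{A}(\mathcal{I}_{P}(C))$, and then to prove the resulting rank identity using the symmetry of $A$. Throughout write $G=\mathcal{I}_{P}(C)$, $V=V(G)$, and $Y=V\backslash X$, and partition $A$ into blocks $A_{XX},A_{XY},A_{YX},A_{YY}$ indexed by $X$ and $Y$; since $A$ is symmetric, $A_{YX}=(A_{XY})^{tr}$ and $A_{XX},A_{YY}$ are symmetric. The left-hand quantity is immediately matricial: $\mathcal{I}_{P}(C)-X$ is the full subgraph $G[Y]$, so $\rho(\mathcal{I}_{P}(C)-X)=\rho(A_{YY})$, while $\rho(\mathcal{I}_{P}(C))=\rho(A)$.

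For the right-hand side I would first recall the standard duality fact (see \cite{O}) that a set is independent in $M^{\ast}$ precisely when its complement is spanning in $M$. Thus $X$ is independent in $M_{A}(G)^{\ast}$ if and only if $Y$ is spanning in $M_{A}(G)$, and since $M_{A}(G)$ is the matroid represented by the columns of $A$ (Definition \ref{mat}), this holds if and only if the columns of $A$ indexed by $Y$ span the whole column space, i.e.\ $\rho(A[\,\cdot\,,Y])=\rho(A)$, where $A[\,\cdot\,,Y]$ denotes the $V\times Y$ submatrix. The proposition therefore reduces to the purely linear-algebraic equivalence
\[
\rho(A_{YY})=\rho(A)\iff\rho(A[\,\cdot\,,Y])=\rho(A).
\]

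One direction is free: $A_{YY}$ is a submatrix of $A[\,\cdot\,,Y]$, whose columns form a subset of those of $A$, so $\rho(A_{YY})\leq\rho(A[\,\cdot\,,Y])\leq\rho(A)$ always, and $\rho(A_{YY})=\rho(A)$ forces equality throughout. The substance of the argument---and the step I expect to be the main obstacle---is the converse, where symmetry must be invoked. Assuming the columns indexed by $Y$ span, each column of $A$ indexed by $X$ is a combination of columns indexed by $Y$, so $A[\,\cdot\,,X]=A[\,\cdot\,,Y]\,P$ for some $Y\times X$ matrix $P$; reading off the rows indexed by $Y$ gives $A_{YX}=A_{YY}P$, and transposing while using symmetry yields $A_{XY}=P^{tr}A_{YY}$. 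This exhibits every row of the block $A_{XY}$ as a combination of rows of $A_{YY}$, so in the stacked matrix $A[\,\cdot\,,Y]$ (whose rows indexed by $X$ are $A_{XY}$ and whose rows indexed by $Y$ are $A_{YY}$) the rows indexed by $X$ lie in the row span of those indexed by $Y$; hence the row space of $A[\,\cdot\,,Y]$ coincides with that of $A_{YY}$, giving $\rho(A[\,\cdot\,,Y])=\rho(A_{YY})$. Together with the spanning hypothesis this yields $\rho(A_{YY})=\rho(A)$, completing the converse.

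Combining the two reductions with this rank equivalence yields the proposition. I note that the argument uses nothing about the $4$-regular graph $F$ beyond the fact that $A$ is a symmetric $GF(2)$-matrix; the role of the proposition in the larger proof is to connect property~2 of Corollary \ref{rankcor} to coindependence in $M_{A}(\mathcal{I}_{P}(C))$, which, with Corollary \ref{rankcor} itself, identifies the independent sets of $M_{A}(\mathcal{I}_{P}(C))^{\ast}$ with those of the polygon matroid of $Tch(P)$ and thereby delivers Theorem \ref{corekerequi}.
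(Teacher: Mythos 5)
Your proof is correct, and its frame matches the paper's: both translate independence of $X$ in $M_{A}(\mathcal{I}_{P}(C))^{\ast}$ into the statement that the columns of $A=\mathcal{A}(\mathcal{I}_{P}(C))$ indexed by $Y=V\backslash X$ span the column space, and both treat the direction from $\rho(A_{YY})=\rho(A)$ to spanning as immediate. The two arguments part ways only on the converse. The paper extracts a column basis $B\subseteq Y$ and invokes the strong principal minor theorem (Theorem \ref{spmt}) to conclude that the principal submatrix indexed by $B$ is nonsingular, whence $\rho(A_{YY})\geq\left\vert B\right\vert=\rho(A)$. You instead prove the needed rank identity directly: from $A_{YX}=A_{YY}P$ and symmetry you obtain $A_{XY}=P^{tr}A_{YY}$, so the row space of $A[\,\cdot\,,Y]$ coincides with that of $A_{YY}$ and $\rho(A[\,\cdot\,,Y])=\rho(A_{YY})$. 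This is in substance the same symmetry trick the paper uses to prove Theorem \ref{spmt} itself, but applied to all of $Y$ rather than to a basis of cardinality $\rho(A)$; the payoff is a self-contained argument that never needs to pass to a basis or mention nonsingularity of a principal minor. The paper's route buys economy within the larger text, since Theorem \ref{spmt} is stated and proved anyway for use elsewhere (e.g., in Theorem \ref{matdmat} and in the discussion of the Tutte polynomial). Your closing remark on the role of the proposition --- linking property 2 of Corollary \ref{rankcor} to coindependence so as to deliver Theorem \ref{corekerequi} --- is also accurate.
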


\begin{proof}
Recall that $M_{A}(\mathcal{I}_{P}(C))^{\ast}$ is defined by the fact that its
independent sets are the complements of the spanning sets of $M_{A}%
(\mathcal{I}_{P}(C))$. That is, $X$ is an independent set of $M_{A}%
(\mathcal{I}_{P}(C))^{\ast}$ if and only if the columns of $\mathcal{I}%
_{P}(C)$ corresponding to elements of $Y=V(\mathcal{I}_{P}(C))-X$ span the
column space $W$ of $\mathcal{I}_{P}(C)$.

If $\rho(\mathcal{I}_{P}(C)-X)=\rho(\mathcal{I}_{P}(C))$ then $Y$ has
$\rho(\mathcal{I}_{P}(C))$ independent columns, and these must certainly span
$W$.

Conversely, if $W$ is spanned by the columns of $\mathcal{I}_{P}(C)$
corresponding to elements of $Y$ then there must be a subset $B\subseteq Y$
such that the columns of $\mathcal{I}_{P}(C)$ corresponding to elements of $B$
constitute a basis of $W$. As $\mathcal{I}_{P}(C)$ is symmetric, the strong
principal minor theorem of Kodiyalam, Lam and Swan \cite{K} (see Theorem
\ref{spmt} below) tells us that the principal submatrix of $\mathcal{I}%
_{P}(C)$ corresponding to $B$ is nonsingular. This principal submatrix is a
submatrix of $\mathcal{I}_{P}(C)-X$, so $\rho(\mathcal{I}_{P}(C)-X)\geq
\left\vert B\right\vert =\rho(\mathcal{I}_{P}(C))$. The opposite inequality is
obvious, so $\rho(\mathcal{I}_{P}(C)-X)=\rho(\mathcal{I}_{P}(C))$.
\end{proof}

As mentioned in the introduction, the duality between touch-graphs and the
adjacency matroids of looped circle graphs motivates many properties of
general adjacency matroids. For instance, suppose $C$ is an Euler system of a
4-regular graph $F$, $P$ is a circuit partition compatible with $C$, and $v\in
V(F)$. Then Figure \ref{merge} makes it clear that if $P^{\prime}$ is the
circuit partition obtained from $P$ by changing the transition at $v$ to the
other transition that does not appear in $C$, then $v$ is a loop in at least
one of the graphs $Tch(P)$, $Tch(P^{\prime})$. It follows that $v$ is a loop
of at least one of the matroids $M_{A}(\mathcal{I}_{P}(C))^{\ast}$,
$M_{A}(\mathcal{I}_{P^{\prime}}(C))^{\ast}$. Thus touch-graphs motivate
property (ii) of the introduction.

Similarly, touch-graphs motivate Theorem \ref{lcmat} of the introduction. Let
$F$ be a 4-regular graph with a vertex $v$ and an Euler system $C$. Kotzig
\cite{Kot} observed that $F$ also has an\ Euler system $C\ast v$, which
involves the same transition as $C$ at every vertex other than $v$, and at $v$
involves the transition that is orientation-inconsistent with respect to $C$.
Let $P$ be a circuit partition of $F$ that is compatible with $C$. Then the
following observations explain the duals of the three assertions of Theorem
\ref{lcmat}. 1. If $v$ is unlooped in $\mathcal{I}_{P}(C)$ then $P$ involves
the transition at $v$ that is consistent with the orientation of $C$, $P$ is
compatible with $C\ast v$ and $\mathcal{I}_{P}(C\ast v)$ is the local
complement $\mathcal{I}_{P}(C)^{v}$. Then $M_{A}(\mathcal{I}_{P}%
(C))=M_{A}(\mathcal{I}_{P}(C)^{v})$ because both are dual to the polygon
matroid of $Tch(P)$. 2. Suppose $v$ is looped in $\mathcal{I}_{P}(C)$, and let
$P^{\prime}$ be the circuit partition obtained from $P$ by changing the
transition at $v$ to the one that is orientation-inconsistent with $C$. Then
$\mathcal{I}_{P}(C)^{v}=\mathcal{I}_{P^{\prime}}(C\ast v)$. If $v$ is a loop
in both $Tch(P)$ and $Tch(P^{\prime})$, then $Tch(P)=Tch(P^{\prime})$ and
hence $M_{A}(\mathcal{I}_{P}(C))=M_{A}(\mathcal{I}_{P}(C)^{v})$ because both
are dual to the polygon matroid of $Tch(P)=Tch(P^{\prime})$. 3. Suppose $v$ is
looped in $\mathcal{I}_{P}(C)$, but not a loop in $Tch(P)$. Again, let
$P^{\prime}$ be the circuit partition obtained from $P$ by changing the
transition at $v$ to the one that is orientation-inconsistent with $C$. Then
$Tch(P^{\prime})$ is the graph obtained from $Tch(P)$ by contracting $v$ and
then attaching a loop at the vertex corresponding to $v$, so the dual of the
polygon matroid of $Tch(P^{\prime})$ is isomorphic to the matroid obtained
from the dual of the polygon matroid of $Tch(P)$ by deleting $v$ and replacing
it with a coloop.

\section{Theorem \ref{ubiq}}

Let $G$ be any graph with no isolated, unlooped vertex. Then a 4-regular graph
$F$ with a distinguished circuit partition $P$ may be constructed from $G$ in
two steps, as follows.%
\begin{figure}
[ptb]
\begin{center}
\includegraphics[
trim=0.800687in 0.933839in 1.074454in 0.937048in,
height=6.6452in,
width=4.8066in
]%
{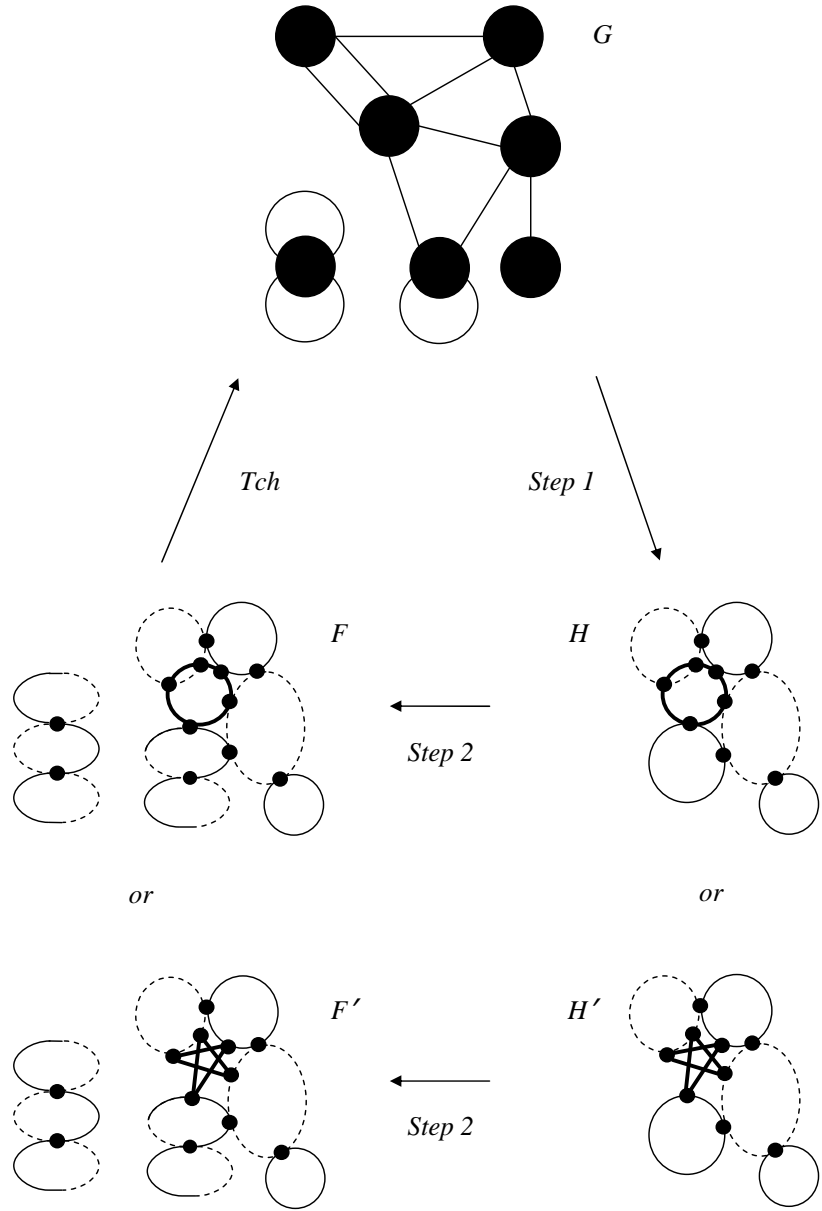}%
\caption{Bold, dashed and plain \textquotedblleft edge
styles\textquotedblright\ indicate distinguished circuit partitions $P$ and
$P^{\prime}$ with $Tch(P)$ and $Tch(P^{\prime})$ isomorphic to $G$.}%
\label{circgraf3b}%
\end{center}
\end{figure}

Step 1. For each vertex $v\in V(G)$, list the incident non-loop edges as
$e_{1}^{v},...,e_{d_{v}}^{v}$ in some order. Then construct a 4-regular graph
$H$ with a vertex for each non-loop edge of $G$, in such a\ way that for each
$v\in V(G)$, $H$ has $d_{v}$ edges; the $i^{th}$ of these edges connects the
vertex corresponding to $e_{i}^{v}$ to the vertex corresponding to
$e_{i+1}^{v}$ (with subindices considered modulo $d_{v}$). $H$ has a
distinguished circuit partition whose elements correspond to the non-isolated
vertices of $G$.

Step 2. Suppose $G$ has $\ell$ loops. If $\ell=0$ then let $H=F$, and let $P$
be the distinguished circuit partition. Otherwise, let $H=H_{0}$ and list the
loops of $G$ as $e_{1},...,e_{\ell}$ in some order. Suppose $1\leq i\leq\ell$
and $H_{i-1}$ has been constructed with a distinguished circuit partition
whose circuits correspond to some of the vertices of $G$. If the vertex of $G$
incident on $e_{i}$ does not correspond to a distinguished circuit of
$H_{i-1}$, then $H_{i}$ is obtained from $H_{i-1}$ by adjoining a new vertex
with a \textquotedblleft figure eight\textquotedblright\ on it; that is, a
single distinguished circuit consisting of two loops. The new distinguished
circuit corresponds to the vertex of $G$ incident on $e_{i}$, and the new
vertex corresponds to $e_{i}$. If the vertex of $G$ incident on $e_{i}$ does
correspond to a distinguished circuit of $H_{i-1}$, then $H_{i}$ is obtained
from $H_{i-1}$ by introducing a looped vertex \textquotedblleft in the
middle\textquotedblright\ of some edge of this circuit; the corresponding
distinguished circuit of $H_{i}$ includes the three new edges and all the
other edges of the original distinguished circuit of $H_{i-1}$.

At the end of this process we have obtained $H_{\ell}=F$, with a distinguished
circuit partition $P$ such that $Tch(P)\cong G$.

The construction is illustrated in Figure \ref{circgraf3b}, where
distinguished circuit partitions are indicated by the convention that when
following a distinguished circuit through a vertex, one does not change the
\textquotedblleft edge style\textquotedblright\ (bold, dashed or plain). The
\textquotedblleft edge style\textquotedblright\ may change in the middle of an
edge, though.

Note that there is considerable freedom in the construction, both in choosing
the edge-orders $e_{1}^{v},...,e_{d_{v}}^{v}$ and in locating the
non-isolated, looped vertices introduced in Step 2. Consequently the resulting
4-regular graph is far from unique; in Figure \ref{circgraf3b}, $F$ is a
planar graph with two pairs of parallel edges, and $F^{\prime}$ is a
non-planar graph with only one pair of parallel edges.

\section{Local complementation and matroid minors}

The reader familiar with the interlace polynomials of Arratia, Bollob\'{a}s
and Sorkin \cite{A1, A2, A} will recognize some of the concepts and notation
that appear in our discussion of adjacency matroids, but it is important to
keep a significant difference in mind: The interlace polynomials of $G$ are
related to principal submatrices of $\mathcal{A}(G)$, i.e., square submatrices
obtained from $\mathcal{A}(G)$ by removing some columns and the corresponding
rows. The adjacency matroid of $G$, instead, is related to rectangular
submatrices obtained by removing only columns from $\mathcal{A}(G)$.

\subsection{Theorems \ref{loopcontr} and \ref{lcmat2}}

Suppose $v\in V(G)$; let
\[
\mathcal{A}(G)=%
\begin{pmatrix}
\ast & \mathbf{1} & \mathbf{0}\\
\mathbf{1} & A & B\\
\mathbf{0} & C & D
\end{pmatrix}
\text{ and }\mathcal{A}(G^{v})=%
\begin{pmatrix}
\ast & \mathbf{1} & \mathbf{0}\\
\mathbf{1} & A^{c} & B\\
\mathbf{0} & C & D
\end{pmatrix}
.
\]
Here bold numerals indicate rows and columns with all entries equal, the first
row and column correspond to $v$, and $A^{c}$ is the matrix obtained by
toggling all the entries of $A$. To prove Theorem \ref{lcmat2}, observe that
elementary row operations transform
\[%
\begin{pmatrix}
\mathbf{1} & \mathbf{0}\\
A & B\\
C & D
\end{pmatrix}
\text{ into }%
\begin{pmatrix}
\mathbf{1} & \mathbf{0}\\
A^{c} & B\\
C & D
\end{pmatrix}
\text{ .}%
\]
It follows that if $\kappa$ is a column vector then%
\[%
\begin{pmatrix}
\mathbf{1} & \mathbf{0}\\
A & B\\
C & D
\end{pmatrix}
\cdot\kappa=\mathbf{0}\text{ if and only if }%
\begin{pmatrix}
\mathbf{1} & \mathbf{0}\\
A^{c} & B\\
C & D
\end{pmatrix}
\cdot\kappa=\mathbf{0}\text{.}%
\]
That is, these two matrices have the same right nullspace. It follows that
$Z(M_{A}(G)-v)$ $=$ $Z(M_{A}(G^{v})-v)$, and hence $M_{A}(G)-v$ $=$
$M_{A}(G^{v})-v$, as asserted by Theorem \ref{lcmat2}.

Theorem \ref{loopcontr} follows from another calculation using elementary row
operations. Matrices of the forms%
\[%
\begin{pmatrix}
1 & \mathbf{1} & \mathbf{0}\\
\mathbf{1} & A & B\\
\mathbf{0} & C & D
\end{pmatrix}
\text{, }%
\begin{pmatrix}
1 & \mathbf{1} & \mathbf{0}\\
\mathbf{0} & A^{c} & B\\
\mathbf{0} & C & D
\end{pmatrix}
\text{ and }%
\begin{pmatrix}
A^{c} & B\\
C & D
\end{pmatrix}
\text{ }%
\]
have the same $GF(2)$-nullity, so if $v$ is looped and $v\not \in S\subseteq
V(G)$ then $S\cup\{v\}$ contains a circuit of $M_{A}(G)$ if and only if $S$
contains a circuit of $M_{A}(G^{v}-v)$. It follows that if $v$ is looped, then
$M_{A}(G)/v$ $=$ $M_{A}(G^{v}-v)$.

\subsection{Theorem \ref{deldel} and the strong principal minor theorem}

We turn now to Theorem \ref{deldel}. Suppose $v$ is not a coloop of $M_{A}%
(G)$. Then $\mathcal{A}(G)$ is a symmetric matrix of the form%
\[%
\begin{pmatrix}
\ast & \mathbf{1} & \mathbf{0}\\
\mathbf{1} & A & B\\
\mathbf{0} & C & D
\end{pmatrix}
\text{,}%
\]
with the first column corresponding to $v$ and equal to the sum of certain
other columns. It follows that
\[%
\begin{pmatrix}
\mathbf{1} & \mathbf{0}\\
A & B\\
C & D
\end{pmatrix}
\text{ and }%
\begin{pmatrix}
A & B\\
C & D
\end{pmatrix}
\]
are related through elementary row operations, so these two matrices have the
same right nullspace: $Z(M_{A}(G)-v)$ $=$ $Z(M_{A}(G-v))$. Consequently
$M_{A}(G)-v$ $=$ $M_{A}(G-v)$, as asserted by Theorem \ref{deldel}.

By the way, note that this argument still applies if $v$ is a coloop, provided
that $v$ is not a coloop of the adjacency matroid of the graph obtained from
$G$ by toggling the loop status of $v$.

Theorem \ref{deldel} is equivalent to the following special case of the strong
principal minor theorem of Kodiyalam, Lam and Swan \cite{K}.

\begin{theorem}
\label{spmt}Let $A$ be a symmetric $n\times n$ matrix with entries in $GF(2)$
and let $S$ be a subset of $\{1,...,n\}$, of size $r=rank(A)$. Then the
columns of $A$ corresponding to elements of $S$ are linearly independent if
and only if the principal submatrix of $A$ corresponding to $S$ is nonsingular.
\end{theorem}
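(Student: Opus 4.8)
The plan is to prove the two implications separately, drawing on symmetry only for the harder one. First I would reorder the index set so that $S$ comes first and write
\[
A=\begin{pmatrix} A_{SS} & A_{ST}\\ A_{TS} & A_{TT}\end{pmatrix},
\]
where $T=\{1,\dots,n\}\setminus S$, so that symmetry of $A$ gives $A_{SS}^{tr}=A_{SS}$ and $A_{TS}=A_{ST}^{tr}$. The easy direction is that a nonsingular $A_{SS}$ forces the columns of $A$ indexed by $S$ to be independent: any relation $\sum_{s\in S}c_{s}\,(\text{column }s)=\mathbf 0$, restricted to the rows indexed by $S$, reads $A_{SS}c=\mathbf 0$, whence $c=\mathbf 0$. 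This step uses neither symmetry nor the hypothesis $|S|=rank(A)$.

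For the converse I would argue by contradiction. Since the columns indexed by $S$ are independent and $|S|=r=\dim W$, where $W$ is the column space of $A$, these $r$ columns form a basis of $W$; hence every column of $A$ lies in their span, and reading off the rows indexed by $S$ produces an $r\times(n-r)$ matrix $X$ with $A_{ST}=A_{SS}X$. Transposing this relation and using symmetry twice gives the identity $A_{TS}=X^{tr}A_{SS}$. Now if $A_{SS}y=\mathbf 0$ for some nonzero $y\in GF(2)^{S}$, then the zero-padded vector satisfies
\[
A\begin{pmatrix} y\\ \mathbf 0\end{pmatrix}=\begin{pmatrix} A_{SS}y\\ A_{TS}y\end{pmatrix}=\begin{pmatrix} \mathbf 0\\ X^{tr}A_{SS}y\end{pmatrix}=\mathbf 0,
\]
so $\sum_{s\in S}y_{s}\,(\text{column }s)=\mathbf 0$ is a nontrivial dependence among the columns indexed by $S$, contradicting their independence. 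Hence $A_{SS}$ is nonsingular. The crux --- and the only place symmetry is used --- is the identity $A_{TS}=X^{tr}A_{SS}$, which is exactly what allows a null vector of the principal block $A_{SS}$ to be extended by zeros to a null vector of all of $A$; the non-symmetric matrix $\left(\begin{smallmatrix}0&0\\1&0\end{smallmatrix}\right)$ (with $S=\{1\}$) shows the conclusion genuinely fails without symmetry, so this step cannot be bypassed.

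Alternatively, since the text notes that this theorem is equivalent to Theorem \ref{deldel}, I would deduce it by iterated deletion. Reading $A$ as $\mathcal A(G)$, the columns indexed by $S$ are independent with $|S|=r$ precisely when $S$ is a basis of $M_{A}(G)$, and then no vertex of $T$ is a coloop (a coloop lies in every basis, hence in $S$). Deleting the vertices of $T$ one at a time and applying Theorem \ref{deldel} at each stage gives $M_{A}(G)-T=M_{A}(G-T)=M_{A}(G[S])$, a matroid of rank $r$ on the $r$-element set $S$; such a matroid is free, so $\mathcal A(G[S])=A[S,S]$ is nonsingular, while the reverse implication is again the trivial restriction argument above. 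Here the main obstacle is not conceptual but bookkeeping: one must verify that after each deletion the surviving vertices of $T$ remain non-coloops --- which holds because $S$ persists as a basis of the rank-$r$ minor --- so that Theorem \ref{deldel} is applicable at every step.
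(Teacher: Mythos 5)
Your first argument is correct and is essentially the paper's own proof in block-matrix dress: the paper also observes that the $S$-columns span the column space, so each column of $A$ indexed by $T$ is a combination of $S$-columns, and then invokes symmetry to transfer this to a statement about rows; the only cosmetic difference is that the paper concludes by noting the $T$-rows of the column-submatrix $A'$ may be deleted without changing its row space (hence its rank stays $r$), whereas you conclude by extending a hypothetical null vector of $A_{SS}$ by zeros to a null vector of $A$ via $A_{TS}=X^{tr}A_{SS}$. Both are valid and both isolate symmetry as the one indispensable hypothesis; your $2\times 2$ counterexample makes that point nicely. Your second, alternative route --- deducing the hard direction from Theorem \ref{deldel} by deleting the vertices of $T$ one at a time --- is genuinely different from what the paper does (the paper asserts the equivalence of the two theorems but proves each independently), and it is sound: since $S$ is a basis, no element of $T$ is a coloop, and after each deletion $S$ remains a basis of the rank-$r$ minor, so the remaining elements of $T$ stay non-coloops and Theorem \ref{deldel} applies at every step; the terminal matroid $M_{A}(G[S])$ has rank $r$ on $r$ elements, hence is free, hence $\mathcal{A}(G[S])=A_{SS}$ is nonsingular. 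What this buys is a purely matroid-theoretic derivation that makes the ``equivalence'' claimed in the paper into an actual implication in one direction; what it costs is reliance on the separately proved deletion theorem, whereas the direct linear-algebra argument is self-contained.
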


\begin{proof}
If the principal submatrix of $A$ corresponding to $S$ is nonsingular then its
columns must be linearly independent. Obviously then the corresponding columns
of $A$, which are obtained from the columns of the principal submatrix by
inserting rows corresponding to elements of $\{1,...,n\}\backslash S$, must
also be linearly independent.

The interesting part of the theorem is the converse: if the columns of $A$
corresponding to elements of $S$ form an $n\times r$ matrix of rank $r$, then
the $r\times r$ submatrix obtained by removing the rows corresponding to
elements of $\{1,...,n\}\backslash S$ is also of rank $r$. The proof is
simple: Let $A^{\prime}$ be the $n\times r$ submatrix of $A$ that includes
only the columns with indices from $S$; by hypothesis, $A$ and $A^{\prime}$
have the same column space. If $i\in\{1,...,n\}\backslash S$ then the $i$th
column of $A$ must be the sum of some columns with indices from $S$, and by
symmetry the $i$th row of $A$ must be the sum of some rows with indices from
$S$. Consequently the same is true of the $i$th row of $A^{\prime}$. It
follows that removing the $i$th row of $A^{\prime}$ for every $i\notin S$
yields an $r\times r$ submatrix with the same row space as $A^{\prime}$, and
hence the same rank as $A$.
\end{proof}

\subsection{Theorems \ref{nonloopcontr} and \ref{lcmat}}

To prove part 1 of Theorem \ref{lcmat}, suppose $v\in V(G)$ is unlooped. Let
\[
\mathcal{A}(G)=%
\begin{pmatrix}
0 & \mathbf{1} & \mathbf{0}\\
\mathbf{1} & A & B\\
\mathbf{0} & C & D
\end{pmatrix}
\text{ and }\mathcal{A}(G^{v})=%
\begin{pmatrix}
0 & \mathbf{1} & \mathbf{0}\\
\mathbf{1} & A^{c} & B\\
\mathbf{0} & C & D
\end{pmatrix}
.
\]
Elementary row operations transform $\mathcal{A}(G)$ into $\mathcal{A}(G^{c})
$, so these matrices have the same right nullspace. It follows that
$Z(M_{A}(G))$ $=$ $Z(M_{A}(G^{v}))$.

We are now ready to prove all three parts of Theorem \ref{nonloopcontr}. If
$v$ is unlooped and isolated then $v$ is a loop in $M_{A}(G)$, so $M_{A}(G)/v$
$=$ $M_{A}(G)-v$; Theorem \ref{deldel} tells us that $M_{A}(G)-v$ $=$
$M_{A}(G-v)$. If $v$ and $w$ are unlooped neighbors then part 1 of Theorem
\ref{lcmat} tells us that $M_{A}(G)$ $=$ $M_{A}(G^{w})$; $v$ is looped in
$G^{w}$, so Theorem \ref{loopcontr} tells us that $M_{A}(G^{w})/v$ $=$
$M_{A}((G^{w})^{v}-v)$. Finally, if $v$ is unlooped and $w$ is a looped
neighbor of $v$ in $G$ then part 1 of Theorem \ref{lcmat} tells us that
$M_{A}(G)$ $=$ $M_{A}(G^{v})$; $w$ is an unlooped neighbor of $v$ in $G^{v}$,
so the preceding sentence tells us that $M_{A}(G)/v$ $=$ $M_{A}(G^{v})/v$ $=$
$M_{A}(((G^{v})^{w})^{v}-v)$.

Turning to part 2 of Theorem \ref{lcmat}, suppose a looped vertex $v$ is a
coloop of both $M_{A}(G)$ and $M_{A}(G^{v})$. Then $\mathcal{C}(M_{A}(G))$ $=
$ $\mathcal{C}(M_{A}(G)-v)$ and $\mathcal{C}(M_{A}(G^{v}))$ $=$ $\mathcal{C}%
(M_{A}(G^{v})-v)$. Theorem \ref{lcmat2} tells us that $M_{A}(G)-v$ $=$
$M_{A}(G^{v})-v$, so $M_{A}(G)$ $=$ $M_{A}(G^{v})$.

Part 3 of Theorem \ref{lcmat} involves the following.

\begin{lemma}
\label{noncoloop}Suppose $v\in V(G)$. Then $v$ is not a coloop of $M_{A}(G)$
if and only if the three matrices
\[
\mathcal{A}(G)=%
\begin{pmatrix}
\ast & \mathbf{1} & \mathbf{0}\\
\mathbf{1} & A & B\\
\mathbf{0} & C & D
\end{pmatrix}
\text{, }%
\begin{pmatrix}
\mathbf{1} & \mathbf{0}\\
A & B\\
C & D
\end{pmatrix}
\text{ and }%
\begin{pmatrix}
\mathbf{0} & \mathbf{0}\\
A & B\\
C & D
\end{pmatrix}
\]
have the same rank over $GF(2)$. (Here the first row and column of
$\mathcal{A}(G)$ correspond to $v$.)
\end{lemma}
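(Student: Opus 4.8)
The plan is to convert both sides of the equivalence into rank statements and then exploit the symmetry of $\mathcal{A}(G)$. I would first invoke the standard matroid fact that $v$ is a coloop of $M_{A}(G)$ exactly when it lies in every basis, equivalently when deleting the $v$-column of $\mathcal{A}(G)$ strictly lowers the rank. Since the second matrix of the lemma is precisely $\mathcal{A}(G)$ with its $v$-column removed, this says that $v$ fails to be a coloop if and only if the first two matrices have equal rank. In particular the converse direction of the lemma is immediate: if all three ranks coincide then the first two do, so $v$ is not a coloop.

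For the forward direction I would assume $v$ is not a coloop and must then show that the second and third matrices also share the common rank. These two matrices differ only in their top row, namely $(\mathbf{1}\ \mathbf{0})$ for the second and $(\mathbf{0}\ \mathbf{0})$ for the third, and both sit atop the common block $\left(\begin{smallmatrix} A & B \\ C & D \end{smallmatrix}\right)$, which is exactly $\mathcal{A}(G-v)$. Deleting the zero row identifies the rank of the third matrix with that of this block, so it suffices to prove that the row $(\mathbf{1}\ \mathbf{0})$ already lies in the row space of $\mathcal{A}(G-v)$; then adjoining it to form the second matrix leaves the rank unchanged.

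Here is the crux. Because $v$ is not a coloop, the $v$-column $(\ast,\ \mathbf{1},\ \mathbf{0})^{tr}$ is a linear combination of the remaining columns, so there is a vector $\kappa$ with $(\text{second matrix})\cdot\kappa = (\ast,\ \mathbf{1},\ \mathbf{0})^{tr}$. Reading off the two lower block-rows of this identity gives $\left(\begin{smallmatrix} A & B \\ C & D \end{smallmatrix}\right)\kappa = (\mathbf{1},\ \mathbf{0})^{tr}$, so the column vector $(\mathbf{1},\ \mathbf{0})^{tr}$ lies in the column space of $\mathcal{A}(G-v)$. Since $\mathcal{A}(G-v)$ is symmetric, its column space equals its row space, and hence the row $(\mathbf{1}\ \mathbf{0})$ lies in the row space of $\mathcal{A}(G-v)$, which is exactly what was needed.

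I expect the only delicate point to be this final passage from a column of $\mathcal{A}(G-v)$ to the corresponding row. It rests entirely on the symmetry of the adjacency matrix---the same feature that powers the strong principal minor theorem (Theorem \ref{spmt})---and without it there would be no mechanism forcing the third matrix to match the second. Everything else is bookkeeping with block matrices together with the elementary characterization of a coloop.
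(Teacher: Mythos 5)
Your proof is correct and follows essentially the same route as the paper: both characterize the non-coloop condition as the $v$-column lying in the span of the remaining columns (giving equality of the first two ranks), and both then use symmetry to convert that column relation into a row relation showing the top row $(\mathbf{1}\ \mathbf{0})$ is redundant over $\mathcal{A}(G-v)$ (giving equality of the last two ranks). The only cosmetic difference is that the paper transposes the relation inside the full matrix $\mathcal{A}(G)$ via the support set $S_{v}$, while you apply the column-space-equals-row-space observation to the symmetric submatrix $\mathcal{A}(G-v)$; these are the same mechanism.
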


\begin{proof}
If the three matrices have the same rank then in particular, the first two
have the same rank. Consequently the first column of $\mathcal{A}(G)$ must
equal the sum of certain other columns; hence there is a circuit of $M_{A}(G)$
that contains $v$. Conversely, if $v$ is not a coloop of $M_{A}(G)$ then the
column of $\mathcal{A}(G)$ corresponding to $v$ must be the sum of the columns
corresponding to some subset $S_{v}\subseteq V(G)\backslash\{v\}$. By
symmetry, the sum of the rows corresponding to $S_{v}$ must equal the first
row of $\mathcal{A}(G)$.
\end{proof}

Suppose now that $v$ is a looped non-coloop of $M_{A}(G)$. The set $S_{v}$
must include an odd number of columns of $A$, to yield the diagonal entry
$\ast$ $=$ $1$ in the column of $\mathcal{A}(G)$ corresponding to $v$.
Replacing $A$ with $A^{c}$ toggles an odd number of summands in each row of
$A$, so the sum of the columns of
\[
\mathcal{A}(G^{v})=%
\begin{pmatrix}
1 & \mathbf{1} & \mathbf{0}\\
\mathbf{1} & A^{c} & B\\
\mathbf{0} & C & D
\end{pmatrix}
\]
corresponding to $S_{v}$ must be the column vector
\[%
\begin{pmatrix}
1\\
\mathbf{0}\\
\mathbf{0}%
\end{pmatrix}
.
\]
It follows that the $GF(2)$-ranks of
\[%
\begin{pmatrix}
\mathbf{1} & \mathbf{0}\\
A^{c} & B\\
C & D
\end{pmatrix}
\text{ and }%
\begin{pmatrix}
1 & \mathbf{0} & \mathbf{0}\\
\mathbf{0} & A^{c} & B\\
\mathbf{0} & C & D
\end{pmatrix}
\]
are the same. According to Lemma \ref{noncoloop}, $v$ cannot be a non-coloop
of $M_{A}(G^{v}).$

By the way, the same argument shows that removing the loop from $v$ cannot
produce a non-coloop in the adjacency matroid of the resulting graph. That is,
in the terminology of Section 4 $v$ is a triple coloop of $M_{A}(G^{v})$.

To complete the proof of part 3 of Theorem \ref{lcmat}, note that the fact
that $v$ is a coloop of $M_{A}(G^{v})$ implies that $\mathcal{C}(M_{A}%
(G^{v}))=\mathcal{C}(M_{A}(G^{v})-v)$. Theorem \ref{lcmat2} tells us that
$\mathcal{C}(M_{A}(G^{v})-v)=\mathcal{C}(M_{A}(G)-v)$, and the fact that $v$
is not a coloop of $M_{A}(G)$ implies that $\mathcal{C}(M_{A}(G)-v)$ is a
proper subset of $\mathcal{C}(M_{A}(G))$. It follows that $\left\vert
\mathcal{C}(M_{A}(G^{v}))\right\vert <\left\vert \mathcal{C}(M_{A}%
(G))\right\vert $, and consequently $M_{A}(G^{v})\ncong M_{A}(G)$. \ The
equality $M_{A}(G^{v})=(M_{A}(G^{v})-v)\oplus U_{1,1}(\{v\})$ follows
immediately from the fact that $v$ is a coloop of $M_{A}(G^{v})$, and this
equality implies $M_{A}(G^{v})=(M_{A}(G)-v)\oplus U_{1,1}(\{v\})$ by Theorem
\ref{lcmat2}.

\section{Theorem \ref{trio} and triple coloops}

Theorem \ref{trio} is essentially a result about the nullspaces of
$\mathcal{A}(G(v))$, $\mathcal{A}(G(v,\ell))$ and $\mathcal{A}(G(v,\ell i))$.
With a convenient order on the vertices of $G$, these three matrices are%
\[%
\begin{pmatrix}
0 & \mathbf{1} & \mathbf{0}\\
\mathbf{1} & A & B\\
\mathbf{0} & C & D
\end{pmatrix}
\text{, }%
\begin{pmatrix}
1 & \mathbf{1} & \mathbf{0}\\
\mathbf{1} & A & B\\
\mathbf{0} & C & D
\end{pmatrix}
\text{ and }%
\begin{pmatrix}
1 & \mathbf{0} & \mathbf{0}\\
\mathbf{0} & A & B\\
\mathbf{0} & C & D
\end{pmatrix}
\]
respectively. Theorem \ref{trio} asserts that two of the nullspaces are the
same, say of dimension $\nu$; the different nullspace contains them, and its
dimension is $\nu+1$. A proof of this statement is given in \cite{T3}.

Observe that no element of the nullspace of the right-hand matrix could
possibly have a nonzero first coordinate. Consequently $v$ does not appear in
any circuit of $M_{A}(G(v,\ell i))$; that is, $v$ is a coloop of
$M_{A}(G(v,\ell i))$, as noted in Proposition \ref{new} of the introduction.
In the special case that $M_{A}(G(v,\ell i))$ has a larger cycle space than
$M_{A}(G(v))=M_{A}(G(v,\ell))$, it follows that $v$ must also be a coloop of
$M_{A}(G(v))$ and $M_{A}(G(v,\ell))$. On the other hand, if $M_{A}(G(v))$ or
$M_{A}(G(v,\ell))$ has a larger cycle space than $M_{A}(G(v,\ell i))$ then
either the matrix displayed on the left or the matrix displayed in the center
has a larger nullspace than the one on the right. Clearly any vector in either
of these two nullspaces that is not in the nullspace of the right-hand matrix
must have a nonzero first coordinate; consequently $v$ is not a coloop of the
corresponding matroid. We deduce the following sharpened form of the result
(ii) mentioned in the introduction.

\begin{corollary}
\label{triple}If $v\in V(G)$ then $v$ is a coloop of $M_{A}(G(v,\ell i))$ and
at least one of the adjacency matroids $M_{A}(G(v))$, $M_{A}(G(v,\ell))$. It
is a coloop of all three if and only if
\[
Z(M_{A}(G(v)))=Z(M_{A}(G(v,\ell))\subset Z(M_{A}(G(v,\ell i))).
\]

\end{corollary}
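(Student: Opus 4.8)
The plan is to organize the argument around Theorem \ref{trio}, feeding it the two observations recorded immediately above the corollary. The first observation is (a): $v$ is a coloop of $M_A(G(v,\ell i))$, because the first column of the rightmost displayed matrix has a $1$ in the first position and $0$ elsewhere, so every vector in its nullspace has zero first coordinate and $v$ lies in no circuit. The second is (b): any vector lying in the nullspace of the left matrix ($G(v)$) or the center matrix ($G(v,\ell)$) but not in the nullspace of the right matrix ($G(v,\ell i)$) must have a nonzero first coordinate. Combining (a) and (b), one gets the convenient reformulation that, for each of $M_A(G(v))$ and $M_A(G(v,\ell))$, the vertex $v$ is a coloop if and only if that matroid's cycle space is contained in $Z(M_A(G(v,\ell i)))$.

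First I would invoke Theorem \ref{trio}: among the three cycle spaces $Z(M_A(G(v)))$, $Z(M_A(G(v,\ell)))$, $Z(M_A(G(v,\ell i)))$, two coincide and the third strictly contains them. Since the containing space has strictly larger dimension it is unique, so the three possibilities for which space is the large one are mutually exclusive and exhaustive, and I would treat them as three cases.

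If the large space is $Z(M_A(G(v,\ell i)))$, then $Z(M_A(G(v)))=Z(M_A(G(v,\ell)))$ and this common space is contained in $Z(M_A(G(v,\ell i)))$; by (a) the latter consists of vectors with zero first coordinate, hence so does the common space, and therefore $v$ is a coloop of $M_A(G(v))$ and $M_A(G(v,\ell))$ as well. Together with (a) this says $v$ is a coloop of all three, and the configuration is precisely $Z(M_A(G(v)))=Z(M_A(G(v,\ell)))\subset Z(M_A(G(v,\ell i)))$. If instead the large space is $Z(M_A(G(v)))$, with the case of $Z(M_A(G(v,\ell)))$ being symmetric, then $Z(M_A(G(v,\ell)))=Z(M_A(G(v,\ell i)))$, so $v$ is a coloop of $M_A(G(v,\ell))$; but the large space strictly contains $Z(M_A(G(v,\ell i)))$, so by (b) it has a vector with nonzero first coordinate, whence $v$ is not a coloop of $M_A(G(v))$. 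In every case, then, $v$ is a coloop of $M_A(G(v,\ell i))$ and of at least one of $M_A(G(v))$, $M_A(G(v,\ell))$, which is the first assertion.

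Finally, for the displayed equivalence: $v$ is a coloop of all three exactly when it is a coloop of both $M_A(G(v))$ and $M_A(G(v,\ell))$, and the two off-diagonal cases above each exhibit one of these two matroids for which $v$ fails to be a coloop. Hence $v$ is a coloop of all three if and only if we are in the first case, that is, if and only if $Z(M_A(G(v)))=Z(M_A(G(v,\ell)))\subset Z(M_A(G(v,\ell i)))$. I do not anticipate a genuine obstacle, since Theorem \ref{trio} together with facts (a) and (b) does all the work; the only point requiring care is the forward direction of the equivalence, where uniqueness of the large space (guaranteed by the strict dimension gap in Theorem \ref{trio}) is what lets me conclude that being a coloop of all three forces the first case rather than merely being consistent with it.
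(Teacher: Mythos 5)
Your proposal is correct and follows essentially the same route as the paper: the paper's argument (in the discussion immediately preceding the corollary) likewise observes that every nullspace vector of $\mathcal{A}(G(v,\ell i))$ has zero first coordinate, that any vector in the nullspace of $\mathcal{A}(G(v))$ or $\mathcal{A}(G(v,\ell))$ lying outside that of $\mathcal{A}(G(v,\ell i))$ has nonzero first coordinate, and then splits into cases according to which of the three cycle spaces from Theorem \ref{trio} is the larger one. Your explicit reformulation (``$v$ is a coloop iff the cycle space is contained in $Z(M_{A}(G(v,\ell i)))$'') is just a tidy repackaging of the same observations.
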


The special case in which $v$ is a coloop of all three matroids is important
enough to merit a special name.

\begin{definition}
A vertex $v\in V(G)$ is a \emph{triple coloop} of $M_{A}(G)$ if it is a coloop
of $M_{A}(G(v))$, $M_{A}(G(v,\ell))$, and $M_{A}(G(v,\ell i))$.
\end{definition}

Note that the nomenclature is imprecise; although a triple coloop of
$M_{A}(G)$ is certainly a coloop of $M_{A}(G)$, it is not the matroid
structure of $M_{A}(G)$ that determines whether or not a vertex is a triple
coloop. We prefer this imprecise nomenclature over the alternative
\textquotedblleft$v$ is a triple coloop of $G$\textquotedblright\ because that
would also be confusing; a coloop (isthmus) of $G$ is an edge, not a vertex.

Using this notion, Theorems \ref{deldel} and \ref{lcmat} may be sharpened as follows.

\begin{theorem}
\label{lcmatsharp}1. If $v\in V(G)$ is unlooped then $M_{A}(G^{v})$ $=$
$M_{A}(G)$.

2. If a looped vertex $v\in V(G)$ is a coloop of both $M_{A}(G)$ and
$M_{A}(G^{v})$, then $M_{A}(G^{v})$ $=$ $M_{A}(G)$ and $v$ is not a triple
coloop of $M_{A}(G^{v})$ or $M_{A}(G)$.

3. If $v\in V(G)$ is looped and not a coloop of one of $M_{A}(G),M_{A}(G^{v}%
)$, then $v$ is a triple coloop of the other and $M_{A}(G^{v})\ncong M_{A}%
(G)$. More specifically, if $\{M_{A}(G),M_{A}(G^{v})\}=\{M_{1},M_{2}\}$ with
$v$ not a coloop of $M_{1}$, then $v$ is a triple coloop of $M_{2}$ and
$M_{2}=(M_{1}-v)\oplus U_{1,1}(\{v\})$.
\end{theorem}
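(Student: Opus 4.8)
The plan is to derive Theorem \ref{lcmatsharp} from Theorem \ref{lcmat}, which already establishes the matroid equalities and isomorphism statements; the new content here is only the sharpening of ``coloop'' to ``triple coloop'' in parts 2 and 3. So the main task is to upgrade the coloop conclusions using the machinery of Section 6, especially Corollary \ref{triple} and the key observation buried in the proof of Lemma \ref{noncoloop}, namely the ``By the way'' remark showing that when $v$ is a looped non-coloop of $M_{A}(G)$, then $v$ is a triple coloop of $M_{A}(G^{v})$.

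For part 3, I would argue as follows. Part 3 of Theorem \ref{lcmat} already gives us that, writing $\{M_{A}(G),M_{A}(G^{v})\}=\{M_{1},M_{2}\}$ with $v$ not a coloop of $M_{1}$, the vertex $v$ is a coloop of $M_{2}$, that $M_{A}(G^{v})\ncong M_{A}(G)$, and that $M_{2}=(M_{1}-v)\oplus U_{1,1}(\{v\})$. The only thing left to show is that $v$ is in fact a \emph{triple} coloop of $M_{2}$. Here I would invoke the remark following Lemma \ref{noncoloop}: the same computation that shows $v$ is a coloop of $M_{A}(G^{v})$ also shows that toggling the loop status of $v$ (i.e.\ passing to $G^{v}(v)$) leaves $v$ a coloop, so $v$ is a coloop of $M_{A}(G^{v}(v))$, $M_{A}(G^{v}(v,\ell))$, and $M_{A}(G^{v}(v,\ell i))$ — precisely the definition of a triple coloop. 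Concretely, whichever of $G,G^{v}$ corresponds to $M_{2}$ plays the role of the graph in that remark, so the remark delivers exactly the triple-coloop conclusion. This is the step I expect to carry the real weight, and the main obstacle is bookkeeping: making sure the graph attached to $M_{2}$ is the one to which the Lemma \ref{noncoloop} remark applies (it is the local complement of the graph with the non-coloop $v$), and that ``coloop under all three of $(v)$, $(v,\ell)$, $(v,\ell i)$'' is read off correctly from the rank equalities.

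For part 2, the situation is the reverse: here $v$ is assumed a coloop of \emph{both} $M_{A}(G)$ and $M_{A}(G^{v})$, and part 2 of Theorem \ref{lcmat} already gives $M_{A}(G^{v})=M_{A}(G)$. I must show that $v$ is \emph{not} a triple coloop of either. For this I would appeal to Corollary \ref{triple}, which states that $v$ is a coloop of all three of $M_{A}(G(v))$, $M_{A}(G(v,\ell))$, $M_{A}(G(v,\ell i))$ if and only if the shared cycle space of the first two is strictly contained in that of the third. The idea is to show this strict containment fails in the present case. Since $v$ is a looped vertex that is a coloop of $M_{A}(G)=M_{A}(G(v,\ell))$ and, by part 1, $M_{A}(G(v))=M_{A}(G^{v}(v))$ relates the two loop-states, I would trace through the nullity relationships to conclude that here $M_{A}(G(v,\ell i))$ does \emph{not} strictly contain the common cycle space, forcing $v$ to miss the triple-coloop condition.

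Part 1 requires nothing beyond Theorem \ref{lcmat} part 1, since it is verbatim the same statement, so I would simply cite it. The overall structure is therefore light: parts 1 and the bulk of parts 2,3 are immediate from Theorem \ref{lcmat}, and the genuinely new work is two short applications of Corollary \ref{triple} and the Lemma \ref{noncoloop} remark to distinguish ``coloop'' from ``triple coloop.'' The hardest part will be part 2, because proving the \emph{absence} of the triple-coloop property means ruling out the strict cycle-space containment of Corollary \ref{triple}, which requires a careful nullity comparison among the six matroids rather than a single rank computation.
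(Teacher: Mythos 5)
Your treatment of parts 1 and 3 matches the paper exactly: part 1 is verbatim part 1 of Theorem \ref{lcmat}, and part 3 is obtained from part 3 of Theorem \ref{lcmat} together with the ``by the way'' remark following the proof of Lemma \ref{noncoloop}, which shows that the same column relation $S_{v}$ witnessing that $v$ is a looped non-coloop of $M_{1}$ also forces $v$ to remain a coloop after the loop at $v$ is removed in the locally complemented graph, hence a coloop of all three of $M_{A}(G^{v}(v))$, $M_{A}(G^{v}(v,\ell))$, $M_{A}(G^{v}(v,\ell i))$. Your bookkeeping concern there is harmless, since local complementation at $v$ is an involution at the level of adjacency matrices.

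Part 2 is where your sketch has a gap. The ingredients you name --- Corollary \ref{triple}, the hypothesis that $v$ is a coloop of both $M_{A}(G(v,\ell))$ and $M_{A}(G^{v}(v,\ell))$, and the equality $M_{A}(G(v))=M_{A}(G^{v}(v))$ from part 1 --- do not by themselves rule out the triple-coloop configuration. All of those constraints are consistent with the scenario in which $\nu(M_{A}(G(v)))=\nu(M_{A}(G(v,\ell)))=\nu(M_{A}(G^{v}(v)))=\nu(M_{A}(G^{v}(v,\ell)))$ while both $M_{A}(G(v,\ell i))$ and $M_{A}(G^{v}(v,\ell i))$ have nullity one larger, i.e.\ with $v$ a triple coloop on both sides. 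What kills that scenario is the cross identity $\nu(M_{A}(G(v,\ell i)))=\nu(M_{A}(G^{v}(v,\ell)))$, which is Proposition \ref{new} (equivalently, the row and column operations in the paper's own proof that transform $\mathcal{A}(G(v,\ell))$ into $\mathcal{A}(G^{v}(v,\ell i))$). With it the argument closes in one line: if $v$ were a triple coloop of $M_{A}(G)$ then Corollary \ref{triple} gives $\nu(M_{A}(G(v,\ell i)))=\nu(M_{A}(G))+1$, while Proposition \ref{new} and the part-2 equality $M_{A}(G^{v})=M_{A}(G)$ give $\nu(M_{A}(G(v,\ell i)))=\nu(M_{A}(G^{v}))=\nu(M_{A}(G))$, a contradiction; the same with $G$ and $G^{v}$ interchanged handles the other matroid. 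The paper reaches the same conclusion by the contrapositive, computing directly that a triple coloop of $M_{A}(G)$ forces $\nu(\mathcal{A}(G^{v}(v,\ell i)))=\nu(\mathcal{A}(G^{v}(v)))$ and hence that $v$ is not a coloop of $M_{A}(G^{v})$. So your strategy is the right one, but you must add the Proposition \ref{new} nullity link; without naming it, the ``careful nullity comparison'' you defer to cannot succeed.
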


\begin{theorem}
\label{deldelsharp}If $v\in V(G)$ is not a triple coloop of $M_{A}(G)$, then
$M_{A}(G)-v$ $=$ $M_{A}(G-v)$.
\end{theorem}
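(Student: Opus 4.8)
The plan is to deduce Theorem \ref{deldelsharp} from Theorem \ref{deldel} together with the observation recorded immediately after its proof, namely that the conclusion $M_A(G)-v = M_A(G-v)$ already follows whenever $v$ fails to be a coloop of the adjacency matroid of the graph $G'$ obtained from $G$ by toggling the loop status of $v$. So the entire argument reduces to checking that the hypothesis ``$v$ is not a triple coloop of $M_A(G)$'' guarantees that $v$ is a non-coloop of at least one of $M_A(G)$, $M_A(G')$.

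First I would unpack the triple-coloop hypothesis using Corollary \ref{triple}. That corollary says that $v$ is automatically a coloop of $M_A(G(v,\ell i))$ and of at least one of $M_A(G(v))$, $M_A(G(v,\ell))$, and that it is a coloop of all three precisely when a certain cycle-space condition holds. Hence ``$v$ is not a triple coloop'' is equivalent to ``$v$ is not a coloop of at least one of $M_A(G(v))$, $M_A(G(v,\ell))$.'' Next I would identify these two matroids with $M_A(G)$ and $M_A(G')$: the graphs $G(v)$ and $G(v,\ell)$ are exactly $G$ and its loop-toggle $G'$ in one order or the other, since if $v$ is unlooped in $G$ then $G=G(v)$ and $G'=G(v,\ell)$, while if $v$ is looped then $G=G(v,\ell)$ and $G'=G(v)$. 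Thus $\{M_A(G(v)), M_A(G(v,\ell))\} = \{M_A(G), M_A(G')\}$, and the preceding sentence tells us that $v$ is a non-coloop of at least one of $M_A(G)$, $M_A(G')$.

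Finally I would invoke Theorem \ref{deldel} and its follow-up remark. If $v$ is not a coloop of $M_A(G)$, Theorem \ref{deldel} gives the conclusion directly. If instead $v$ is not a coloop of $M_A(G')$, the remark after Theorem \ref{deldel} applies: since $\mathcal{A}(G)$ and $\mathcal{A}(G')$ differ only in the $(v,v)$ entry, deleting the column of $v$ produces the same matrix from both, so $Z(M_A(G)-v) = Z(M_A(G')-v)$; moreover $G-v = G'-v$, so $M_A(G-v) = M_A(G'-v)$. Applying the Theorem \ref{deldel} computation to $G'$ then yields $M_A(G)-v = M_A(G')-v = M_A(G'-v) = M_A(G-v)$.

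I expect the only real subtlety to be the bookkeeping in this last step: one must keep straight that the matroid deletion $M_A(\cdot)-v$ is insensitive to the loop at $v$ (because the offending diagonal entry sits in the deleted column) while the graph deletion $G-v$ likewise discards that loop, so that the two halves of the case split genuinely close up to the single identity $M_A(G)-v = M_A(G-v)$. No new linear algebra is needed beyond what Theorem \ref{deldel} and its remark already supply; the content of the sharpening is precisely the translation of ``not a triple coloop'' into the non-coloop-of-$G$-or-$G'$ condition via Corollary \ref{triple}.
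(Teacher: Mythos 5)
Your proposal is correct and follows essentially the same route as the paper: the paper's proof of Theorem \ref{deldelsharp} consists precisely of the ``by the way'' remark after the proof of Theorem \ref{deldel} (the column-deletion argument applied to the loop-toggled graph $G'$ when $v$ is a non-coloop of $M_{A}(G')$), combined with the translation of ``not a triple coloop'' into ``$v$ is a non-coloop of at least one of $M_{A}(G)$, $M_{A}(G')$'' via Corollary \ref{triple}. Your write-up just makes explicit the bookkeeping ($\{G(v),G(v,\ell)\}=\{G,G'\}$ up to adjacency matrix, and $G-v=G'-v$) that the paper leaves implicit.
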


\begin{proof}
Part 1 of Theorem \ref{lcmatsharp} is the same as part 1 of Theorem
\ref{lcmat}. The proofs of Theorem \ref{deldelsharp} and part 3 of Theorem
\ref{lcmatsharp} are indicated in the preceding section; both are introduced
with the phrase \textquotedblleft by the way.\textquotedblright\ 

It remains to consider part 2 of Theorem \ref{lcmatsharp}. Suppose a looped
vertex $v\in V(G)$ is a triple coloop of $G$, and let%
\[
\mathcal{A}(G)=\mathcal{A}(G(v,\ell))=%
\begin{pmatrix}
1 & \mathbf{1} & \mathbf{0}\\
\mathbf{1} & A & B\\
\mathbf{0} & C & D
\end{pmatrix}
.
\]
According to Theorem \ref{trio} and Corollary \ref{triple},
\[
\nu(\mathcal{A}(G(v)))=\nu(\mathcal{A}(G(v,\ell)))=\nu(\mathcal{A}(G(v,\ell
i)))-1,
\]
where $\nu$ denotes the $GF(2)$-nullity. Observe that elementary row and
column operations transform $\mathcal{A}(G(v,\ell))$ into%
\[%
\begin{pmatrix}
1 & \mathbf{0} & \mathbf{0}\\
\mathbf{0} & A^{c} & B\\
\mathbf{0} & C & D
\end{pmatrix}
,
\]
which is $\mathcal{A}((G^{v}(v,\ell i))$. It follows that $\nu(\mathcal{A}%
(G(v,\ell)))$ $=$ $\nu(\mathcal{A}(G^{v}(v,\ell i)))$. Also, elementary row
operations transform
\[
\mathcal{A}(G(v))=%
\begin{pmatrix}
0 & \mathbf{1} & \mathbf{0}\\
\mathbf{1} & A & B\\
\mathbf{0} & C & D
\end{pmatrix}
\text{ into }%
\begin{pmatrix}
0 & \mathbf{1} & \mathbf{0}\\
\mathbf{1} & A^{c} & B\\
\mathbf{0} & C & D
\end{pmatrix}
=\mathcal{A}(G^{v}(v)),
\]
so $\nu(\mathcal{A}(G(v)))$ $=$ $\nu(\mathcal{A}(G^{v}(v)))$. We conclude that
$\nu(\mathcal{A}(G^{v}(v,\ell i)))$ $=$ $\nu(\mathcal{A}(G^{v}(v)))$;
according to Theorem \ref{trio} and Corollary \ref{triple}, this equality
implies that $v$ is not a coloop of $M_{A}(G^{v}(v,\ell))$ $=$ $M_{A}(G^{v})$.
\end{proof}

\section{The principal vertex tripartition}

Combining various results above, we see that if $v\in V(G)$ then the
relationships among the six adjacency matroids $M_{A}(G(v))$, $M_{A}%
(G(v,\ell))$, $M_{A}(G(v,\ell i))$, $M_{A}(G^{v}(v))$, $M_{A}(G^{v}(v,\ell))$,
$M_{A}(G^{v}(v,\ell i))$ must fall into one of three cases.

\textbf{Case 1}. Suppose $v$ is not a coloop of $M_{A}(G^{v}(v,\ell))$.
According to part 3 of Theorem \ref{lcmatsharp}, $v$ is a triple coloop of
$M_{A}(G(v,\ell))$. Theorem \ref{trio} tells us that%
\[
Z(M_{A}(G^{v}(v)))=Z(M_{A}(G^{v}(v,\ell i)))\subset Z(M_{A}(G^{v}(v,\ell)))
\]
and%
\[
Z(M_{A}(G(v)))=Z(M_{A}(G(v,\ell)))\subset Z(M_{A}(G(v,\ell i))).
\]
Part 1 of Theorem \ref{lcmat} tells us that $M_{A}(G^{v}(v))$ $=$
$M_{A}(G(v))$. $M_{A}(G(v,\ell i))$ has $v$ as a coloop, so the fact that $v$
is not a coloop of $M_{A}(G^{v}(v,\ell))$ implies that $M_{A}(G(v,\ell i))$
$\neq$ $M_{A}(G^{v}(v,\ell))$. All in all, we have the following:
$Z(M_{A}(G(v,\ell i)))$ and $Z(M_{A}(G^{v}(v,\ell)))$ are distinct nontrivial
subspaces of $2^{V(G)}$ with the same dimension, say $\nu+1$; their
intersection is of dimension $\nu$, and
\begin{align*}
Z(M_{A}(G^{v}(v,\ell i)))  &  =Z(M_{A}(G^{v}(v)))=Z(M_{A}(G(v)))=Z(M_{A}%
(G(v,\ell))\\
&  =Z(M_{A}(G(v,\ell i)))\cap Z(M_{A}(G^{v}(v,\ell))).
\end{align*}
The equality $M_{A}(G^{v}(v,\ell i))=(M_{A}(G^{v}(v,\ell))-v)\oplus
U_{1,1}(\{v\})$ follows from Theorem \ref{deldel} and Proposition \ref{new},
and $M_{A}(G(v,\ell i))=(M_{A}(G^{v}(v,\ell))/v)\oplus U_{1,1}(\{v\})$ follows
from Proposition \ref{new}.

\textbf{Case 2}. Suppose $v$ is not a coloop of $M_{A}(G(v,\ell))$. The
discussion proceeds as in case 1, with $G$ and $G^{v}$ interchanged.

\textbf{Case 3}. Suppose $v$ is a coloop of both $M_{A}(G(v,\ell))$ and
$M_{A}(G^{v}(v,\ell))$; then parts 1 and 2 of Theorem \ref{lcmatsharp} tell us
that $M_{A}(G(v))=M_{A}(G^{v}(v))$, $M_{A}(G(v,\ell))=M_{A}(G^{v}(v,\ell))$,
and $v$ is not a triple coloop of either $M_{A}(G(v,\ell))$ or $M_{A}%
(G^{v}(v,\ell))$. Then Theorem \ref{trio} and Corollary \ref{triple} tell us
that%
\begin{align*}
Z(M_{A}(G(v,\ell)))  &  =Z(M_{A}(G^{v}(v,\ell)))=Z(M_{A}(G(v,\ell
i)))=Z(M_{A}(G^{v}(v,\ell i)))\\
&  \subset Z(M_{A}(G(v)))=Z(M_{A}(G^{v}(v)))\text{,}%
\end{align*}
with the dimension of the larger subspace 1 more than the dimension of the
smaller. The equality $M_{A}(G(v,\ell i))=(M_{A}(G(v))-v)\oplus U_{1,1}%
(\{v\})$ follows from Theorem \ref{deldel} and Proposition \ref{new}.

To complete the proof of Theorem \ref{tripart}, we must verify the assertion
that if $G^{v}-v$ is simple, then $v$ cannot fall under case 1 of the
tripartition. Suppose $v\in V(G)$ falls under case 1, and let
\[
\mathcal{A}(G^{v}(v,\ell))=%
\begin{pmatrix}
1 & \mathbf{1} & \mathbf{0}\\
\mathbf{1} & A & B\\
\mathbf{0} & C & D
\end{pmatrix}
,
\]
with the first row and column corresponding to $v$. As $v$ is not a coloop of
$M_{A}(G^{v}(v,\ell))$, the first column of $\mathcal{A}(G^{v}(v,\ell))$ must
equal the sum of the columns corresponding to elements of some subset
$T\subseteq V(G)\backslash\{v\}$. Consider the submatrix of $\mathcal{A}%
(G^{v}(v,\ell))$ obtained by removing the rows and columns corresponding to
vertices not in $T\cup\{v\}$,
\[
\mathcal{A}(G^{v}(v,\ell)[T\cup\{v\}])=%
\begin{pmatrix}
1 & \mathbf{1} & \mathbf{0}\\
\mathbf{1} & A^{\prime} & B^{\prime}\\
\mathbf{0} & C^{\prime} & D^{\prime}%
\end{pmatrix}
.
\]
The sum of the columns of this matrix is $\mathbf{0}$, so the sum of the
entries of the matrix is $0$; that is, the matrix has an even number of
nonzero entries. As the matrix is symmetric, an even number of these nonzero
entries occur off the diagonal; consequently an even number occur on the
diagonal, so at least one element of $T$ is looped in $G^{v}$.

Essentially the same argument proves that in case 2, at least one element of
$T$ is looped in $G$. We should point out that a garbled version of this
simple argument appeared in \cite{Tv}, where it was mistakenly understood to
imply that there must be at least one looped vertex in $T\cap N(v)$. This need
not be the case, as indicated by the third example in the next section. The
statements of Lemma 4.4 and Corollary 4.6 of \cite{Tv} should be corrected by
replacing the hypothesis \textquotedblleft if $a$ has no looped
neighbor\textquotedblright\ with \textquotedblleft if $H-a$ has no looped
vertex.\textquotedblright

\section{Three examples}

Recall that if $k<n$ then $U_{n,k}$ denotes the $n$-element matroid whose
circuits include all the $(k+1)$-element subsets of the ground set. Also,
$U_{n,n}$ denotes the free matroid on $n$ elements, i.e., $\mathcal{C}%
(U_{n,n})$ $=$ $\varnothing$.

Let $K_{3}$ be the complete graph with three vertices. Then $M_{A}(K_{3})\cong
U_{3,2}$. If $v\in V(K_{3})$ then $v$ is not a coloop of either $M_{A}(K_{3})$
or $M_{A}(K_{3}^{v})$; $v$ falls under case 3 of the principal vertex
tripartition. $M_{A}(K_{3}(v,\ell))$ $=$ $M_{A}(K_{3}(v,\ell i))\cong U_{3,3}%
$, $M_{A}(K_{3}^{v})$ $=$ $M_{A}(K_{3})$, $M_{A}(K_{3}-v)$ $=$ $M_{A}%
(K_{3})-v\cong U_{2,2}$, and $M_{A}(K_{3})/v\cong U_{2,1}$.

Let $K_{3\ell}$ be the graph obtained from $K_{3}$ by attaching a loop to one
vertex. Then $M_{A}(K_{3\ell})\cong U_{3,3}$. If $v$ is one of the unlooped
vertices then $v$ is a coloop of $M_{A}(K_{3\ell})$, and $v$ is a triple
coloop of $M_{A}(K_{3\ell}^{v})$; $v$ falls under case 2 of the principal
vertex tripartition. $M_{A}(K_{3\ell}(v,\ell i))$ $=$ $M_{A}(K_{3\ell})$,
$M_{A}(K_{3\ell}(v,\ell))\cong U_{1,1}\oplus U_{2,1}$, and $M_{A}(K_{3\ell
}-v)$ $=$ $M_{A}(K_{3\ell})-v=M_{A}(K_{3\ell})/v\cong U_{2,2}$. If $w$ is the
looped vertex then $w$ is a coloop of $M_{A}(K_{3\ell})$ and a coloop of
$M_{A}(K_{3\ell}^{w})$, but not a triple coloop of either; $w$ falls under
case 3 of the principal vertex tripartition. $M_{A}(K_{3\ell}^{w})$ $=$
$M_{A}(K_{3\ell}(w,\ell i))$ $=$ $M_{A}(K_{3\ell})$, $M_{A}(K_{3\ell}(w))\cong
U_{3,2}$, and $M_{A}(K_{3\ell})-w$ $=$ $M_{A}(K_{3\ell})/w$ $=$ $M_{A}%
(K_{3\ell}-w)\cong U_{2,2}$.%

\begin{figure}
[pt]
\begin{center}
\includegraphics[
trim=1.433155in 8.321118in 1.433980in 1.148847in,
height=0.8864in,
width=3.7931in
]%
{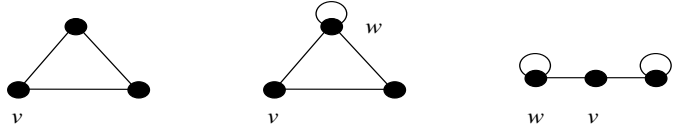}%
\caption{$K_{3}$, $K_{3\ell}$ and $P_{3\ell\ell}$.}%
\label{bingraf3}%
\end{center}
\end{figure}

Let $P_{3}$ be the path of length three, and $P_{3\ell\ell}$ the graph
obtained from $P_{3}$ by attaching loops at the vertices of degree 1.
(Equivalently, $P_{3\ell\ell}=K_{3}^{v}$.) Then $M_{A}(P_{3\ell\ell})\cong
U_{3,2}$. If $v$ is the unlooped vertex then $v$ is not a coloop of
$M_{A}(P_{3\ell\ell})$ or $M_{A}(P_{3\ell\ell}^{v})$; $v$ falls under case 3
of the principal vertex tripartition. $M_{A}(P_{3\ell\ell}(v,\ell))$ $=$
$M_{A}(P_{3\ell\ell}(v,\ell i))$ $\cong$ $U_{3,3}$, $M_{A}(P_{3\ell\ell}^{v})$
$=$ $M_{A}(P_{3\ell\ell})$, $M_{A}(P_{3\ell\ell}-v)$ $=$ $M_{A}(P_{3\ell\ell
})-v$ $\cong$ $U_{2,2}$ and $M_{A}(P_{3\ell\ell})/v$ $\cong$ $U_{2,1}$. If $w$
is one of the looped vertices then $w$ is not a coloop of $M_{A}(P_{3\ell\ell
})$, and $w$ is a triple coloop of $M_{A}(P_{3\ell\ell}^{w})$; $w$ falls under
case 2 of the principal vertex tripartition. $M_{A}(P_{3\ell\ell}(w))$ $=$
$M_{A}(P_{3\ell\ell}(w,\ell i))$ $=$ $M_{A}(P_{3\ell\ell}^{w})$ $\cong$
$U_{3,3}$, $M_{A}(P_{3\ell\ell}-w)$ $=$ $M_{A}(P_{3\ell\ell})-w\cong$
$U_{2,2}$, and $M_{A}(P_{3\ell\ell})/w\cong U_{2,1}$.

Observe that $K_{3}$ and $P_{3\ell\ell}$ have isomorphic adjacency matroids,
and their principal vertex tripartitions are distinct. On the other hand,
$K_{3\ell}$ and $P_{3\ell\ell}$ have nonisomorphic adjacency matroids and
equivalent principal vertex tripartitions. We deduce Theorem \ref{ind} of the introduction.

Note also that both $U_{3,2}$ and $U_{3,3}$ have the property that every
permutation of the ground set is a matroid automorphism. Consequently for each
of these matroids, all elements of the ground set are equivalent under the
principal edge tripartition of Rosenstiehl and Read \cite{RR}. In contrast,
the elements of $M_{A}(K_{3\ell})$ and $M_{A}(P_{3\ell\ell})$ are not
equivalent under the principal vertex tripartition.

\section{Set systems and $\Delta$-matroids}

\label{sec:recall_ss_dm} In this section we briefly summarize a number of
definitions and results related to set systems and $\Delta$-matroids. We refer
to \cite{Bu1} -- \cite{BH3} for detailed discussions.

\subsection{Set systems}

A \emph{set system} (over $V$) is a tuple $D=(V,\sigma)$ with $V$ a finite
set, called the \emph{ground set}, and $\sigma$ a family of subsets of $V$. We
often write $Y\in D$ to mean $Y\in\sigma$. A set system $D$ is called
\emph{proper} if $\sigma\not =\varnothing$, and \emph{normal} if
$\varnothing\in D$. Let $X\subseteq V$. If $D$ is proper, then we define the
\emph{distance} between $X\subseteq V$ and $D$ by $d_{D}(X)=\min(\{|X\Delta
Y|$ $\mid$ $Y\in D\})$. Moreover, we let $d_{D}=d_{D}(\varnothing)$, so that
$D$ is normal if and only if $d_{D}=0$. We define the \emph{restriction} of
$D$ to $X$ by $D[X]=(X,\sigma^{\prime})$ where $\sigma^{\prime}=\{Y\in
\sigma\mid Y\subseteq X\}$, and the \emph{deletion} of $X$ from $D$ by
$D-X=D[V-X]$. Let $\min(\sigma)$ ($\max(\sigma)$, resp.) denote the family of
minimal (maximal, resp.) sets in $\sigma$ with respect to set inclusion, and
let $\min(D)=(V,\min(\sigma))$ ($\max(D)=(V,\max(\sigma))$, resp.) be the
corresponding set systems. A set system $D$ is \emph{equicardinal} if for all
$X_{1},X_{2}\in D$, $|X_{1}|=|X_{2}|$.

Let $D$ again be a set system. For $X\subseteq V$ we define the \emph{pivot}
(also called \emph{twist} in the literature \cite{Bu1}) by $D\ast
X=(V,\sigma\ast X)$, where $\sigma\ast X=\{Y\Delta X$ $\mid$ $Y\in\sigma\}$.
Also, if $v\in V$ we define the \emph{contraction} of $D/v$ by $D/v=D\ast v-v$
\cite[Property~2.1]{Bu3}. Note that $d_{D}(X)=d_{D\ast X}$; in particular,
$D\ast X$ is normal if and only if $X\in D$. Also, note that $D\ast V$ is
obtained from $D$ by complementing every set of $D$ with respect to the ground
set. Thus, it is easy to see that $\min(D)=\max(D\ast V)\ast V$ and
$\max(D)=\min(D\ast V)\ast V$. For $X\subseteq V$ we define \emph{loop
complementation} by $D+X=(V,\sigma^{\prime})$, where $Y\in\sigma^{\prime}$ iff
$|\{Z\in D\mid(Y\setminus X)\subseteq Z\subseteq Y\}|$ is odd \cite{BH1}. In
particular, if $v\in V$ then $D+v=(V,\sigma^{\prime})$ with $\sigma^{\prime
}=\{Y\mid v\notin Y\in D\}\cup\{Y\cup\{v\}$ $\mid$ $Y\in D$ and $Y\cup
\{v\}\notin D\}$. For $X\subseteq V$ we define the \emph{dual pivot} by
$D\bar{\ast}X=D+X\ast X+X$. It turns out that $D\bar{\ast}X=(V,\sigma^{\prime
})$, where $Y\in\sigma^{\prime}$ iff $|\{Z\in D\mid Y\subseteq Z\subseteq
Y\cup X\}|$ is odd. It is easy to verify that $\max(D)=\max(D\bar{\ast}X)$ for
all $X\subseteq V$.

For convenience, we often write $D-\{v\}$, $D\ast\{v\}$, $D\bar{\ast}\{v\}$
etc. simply as $D-v$, $D\ast v$, $D\bar{\ast}v$ etc. Also, we assume
left-associativity of the operations. E.g., $D\bar{\ast}v\ast w-v$ denotes
$((D\bar{\ast}v)\ast w)-v$. Deletion, pivot, loop complementation, and dual
pivot belong to a class of operations called vertex flips which commute on
different elements (see \cite{BH1}). For example, for $v,w\in V$ and
$v\not =w$, $D-v\ast w=D\ast w-v$, $D\bar{\ast}v\ast w=D\ast w\bar{\ast}v$ and
$D\bar{\ast}v\bar{\ast}w=D\bar{\ast}w\bar{\ast}v$. Moreover, pivot, loop
complementation, and dual pivot are involutions.

Suppose $D=(V,\sigma)$ is a set system, and $v\in V$. Then $\sigma
=\sigma^{\prime}\cup\sigma^{\prime\prime}$, where $Y\in\sigma^{\prime}$ (resp.
$Y\in\sigma^{\prime\prime})$ iff $v\not \in Y\in\sigma$ (resp. $v\in
Y\in\sigma$). We define $D\widetilde{-}v=(V,\sigma^{\prime})$ and
$D\widetilde{/}v=(V,\sigma^{\prime\prime})$. That is, $D\widetilde{-}v$ is the
set system on $V$ that includes the same sets as $D-v$, and $D\widetilde{/}v$
is the set system on $V$ that includes the sets $Y\cup\{v\}$ with $Y\in D/v$.

\begin{theorem}
\label{thm:ssil} Let $D$ be a set system on $V$, and suppose $v\in V$ has the
property that $D\widetilde{-}v$ is a proper set system. We have
\[
\max(D\widetilde{-}v+v)=(\max(D\ast v))\widetilde{/}v.
\]
That is, $\max(D\widetilde{-}v+v)$ is obtained from $\max(D\ast v)$ by
removing the sets that do not contain $v$.
\end{theorem}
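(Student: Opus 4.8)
The plan is to describe both sides explicitly in terms of the subfamily $\sigma_{0}=\{Y\in\sigma\mid v\notin Y\}$ of sets of $D$ that avoid $v$, and then to observe that the two descriptions coincide. The hypothesis that $D\widetilde{-}v$ is proper is exactly the statement that $\sigma_{0}\neq\varnothing$, which guarantees that both set systems appearing in the claim are proper and hence possess maximal members.

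First I would compute $D\widetilde{-}v+v$. By definition $D\widetilde{-}v=(V,\sigma_{0})$, so every set of $D\widetilde{-}v$ avoids $v$. Applying the definition of loop complementation at $v$ to such a system, a set $W$ belongs to the result precisely when $|\{Z\in\sigma_{0}\mid W\setminus\{v\}\subseteq Z\subseteq W\}|$ is odd; since every such $Z$ avoids $v$, the only possible member is $Z=W\setminus\{v\}$, so this count is $1$ exactly when $W\setminus\{v\}\in\sigma_{0}$ and is $0$ otherwise, regardless of whether $v\in W$. Hence
\[
D\widetilde{-}v+v=\bigl(V,\;\sigma_{0}\cup\{Y\cup\{v\}\mid Y\in\sigma_{0}\}\bigr),
\]
so loop complementation simply adjoins a $v$-labelled copy $Y\cup\{v\}$ of each $Y\in\sigma_{0}$. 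I expect verifying this ``doubling'' description to be the one step that needs genuine care, since it is where the definition of $+v$ must be unwound; everything afterward is structural.

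With this in hand, in $D\widetilde{-}v+v$ each $v$-free set $Y\in\sigma_{0}$ is properly contained in the set $Y\cup\{v\}$, which is also present; therefore no $v$-free set is maximal, and every maximal set contains $v$. Consequently $\max(D\widetilde{-}v+v)$ is computed among the $v$-containing members alone, which are exactly $\{Y\cup\{v\}\mid Y\in\sigma_{0}\}$. On the other side, the definition of the pivot gives $Y\cup\{v\}\in\sigma\ast\{v\}$ if and only if $Y\in\sigma$ (for $v\notin Y$), so the $v$-containing members of $D\ast v$ are likewise precisely $\{Y\cup\{v\}\mid Y\in\sigma_{0}\}$.

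Finally I would invoke the elementary observation that, in any set system, a superset of a set containing $v$ must itself contain $v$; hence a set containing $v$ is maximal in the whole system if and only if it is maximal among the sets containing $v$. Applying this to $D\ast v$ shows that $(\max(D\ast v))\widetilde{/}v$ is exactly the collection of inclusion-maximal members of $\{Y\cup\{v\}\mid Y\in\sigma_{0}\}$. Since the two systems share the same family of $v$-containing sets, and since $\max(D\widetilde{-}v+v)$ was already reduced to the maximal members of that same family, the two sides agree, which is the claim.
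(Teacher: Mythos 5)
Your proof is correct and follows essentially the same route as the paper's: both explicitly describe $D\widetilde{-}v+v$ as the family $\{Y,\,Y\cup\{v\}\mid v\notin Y\in D\}$, observe that consequently every maximal set contains $v$, identify the $v$-containing sets of $D\widetilde{-}v+v$ with those of $D\ast v$, and finish with the observation that a $v$-containing set is maximal in the whole system if and only if it is maximal among the $v$-containing sets. Your careful unwinding of the definition of $+v$ is a welcome expansion of a step the paper dispatches with ``by definition,'' but the argument is the same.
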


\begin{proof}
By definition, $D\widetilde{-}v+v=(V,\sigma)$ where $\sigma=\{Y,Y\cup\{v\}\mid
v\notin Y\in D\}$. Consequently all sets in $\max(D\widetilde{-}v+v)$ contain
$v$; that is, $\max(D\widetilde{-}v+v)=\max((D\widetilde{-}v+v)\widetilde
{/}v)$. Moreover, the family of sets of $D\widetilde{-}v+v$ that include $v$
is $\{Y\cup\{v\}\mid$ $v\notin Y\in D\}$, which is equal to the family of sets
of $D\ast v$ that include $v$. That is, $(D\widetilde{-}v+v)\widetilde
{/}v=(D\ast v)\widetilde{/}v$; it follows immediately that $\max
((D\widetilde{-}v+v)\widetilde{/}v)=\max((D\ast v)\widetilde{/}v)$. The
equality $(\max((D\ast v))\widetilde{/}v=\max((D\ast v)\widetilde{/}v)$ is
obvious; the maximal elements of $D\ast v$ that contain $v$ are the elements
of $D\ast v$ that are maximal among those that contain $v$.
\end{proof}

Theorem \ref{thm:ssil} is the first of several results that extend properties
of adjacency matroids to general set systems or $\Delta$-matroids. As we
discuss in Theorem \ref{new1} below, this theorem extends part of Proposition
\ref{new} of the introduction.

\subsection{$\Delta$-matroids}

As mentioned in the introduction, a \emph{delta-matroid} ($\Delta$-matroid for
short) is a proper set system $D$ that satisfies the \emph{symmetric exchange
axiom}: For all $X,Y\in D$ and all $u\in X\Delta Y$, $X\Delta\{u\}\in D$ or
there is a $v\in X\Delta Y$ with $v\not =u$ such that $X\Delta\{u,v\}\in D$
(or both) \cite{Bu1}. A proper set system $D$ is a $\Delta$-matroid if and
only if for each $X\subseteq V$, $\min(D\ast X)$ is equicardinal (see
\cite{BH2}). Equivalently, $D$ is a $\Delta$-matroid if and only if for each
$X\subseteq V$, $\max(D\ast X)$ is equicardinal. Let $D$ be a $\Delta$-matroid
and $v\in V$. Then $D\ast v$ is a $\Delta$-matroid. Moreover, $D-v$ is a
$\Delta$-matroid if and only if $D-v$ is proper.

However $D\bar{\ast}v$ may be a proper set system without being a $\Delta
$-matroid. As in \cite[Example 10]{BH1}, let $V$ be a finite set with
$\left\vert V\right\vert \geq3$, and consider the $\Delta$-matroid
$D=(V,\sigma)$ with $\sigma=2^{V}\backslash\{\varnothing\}$. Then it is easy
to see that the symmetric exchange axiom does not hold for $D\bar{\ast
}V=(V,\{\varnothing,V\})$.

If we assume a matroid $M$ is described by its family of bases, i.e., $M$ is
the set system $(V,B)$ where $B$ is the set of bases of $M$, then it is shown
in \cite[Proposition~3]{Bu2} that a matroid $M$ is precisely an equicardinal
$\Delta$-matroid. Moreover, a proper set system $D$ is a $\Delta$-matroid if
and only if for each $X\subseteq V$, $\max(D\ast X)$ is a matroid
\cite[Property~4.1]{Bu3}. Note that for a matroid $M$ (described by its family
of bases), $M\ast V$ is the dual matroid of $M$. Hence, $D$ is a $\Delta
$-matroid if and only if for each $X\subseteq V$, $\min(D\ast X)$ is a
matroid. Clearly for any $\Delta$-matroid $D$, $r(\min(D))=d_{D}$ and
$\nu(\max(D))=d_{D}(V)$, where $r$ and $\nu$ denote the rank and nullity of a
matroid respectively. The deletion operation of $\Delta$-matroids coincides
with the deletion operation of matroids only for non-coloops. Also, the
contraction operation of $\Delta$-matroids coincides with the contraction
operation of matroids only for non-loops. Fortunately, as deletion and
contraction for matroids coincide for both loops and coloops, matroid-deletion
of a coloop is equal to $\Delta$-matroid-contraction of that element, and
matroid-contraction of a loop is equal to $\Delta$-matroid-deletion of that element.

We will need Theorem 14 from \cite{BH2} (the original formulation is in terms
of rank rather than nullity).

\begin{proposition}
\label{prop:dist_char_min} Let $D$ be a $\Delta$-matroid, and suppose $v\in V$
has the property that $D+v$ is also a $\Delta$-matroid. Then $\max(D)$,
$\max(D\ast v)$, and $\max(D+v)$ are matroids such that precisely two of the
three are equal, to say $D_{1}$. Moreover the third, $D_{2}$, has
$(D_{2}-v)\oplus U_{1,1}(\{v\})=D_{1}$ and $\nu(D_{2})=\nu(D_{1})+1$.
\end{proposition}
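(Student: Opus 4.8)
The plan is to reduce all three matroids to a comparison of three cardinalities attached to the element $v$. For each $W\subseteq V\setminus\{v\}$ record the pair $([\,W\in D\,],[\,W\cup\{v\}\in D\,])\in\{0,1\}^{2}$ and call the three nonzero possibilities the \emph{states} $10$, $01$, $11$. Let $\alpha,\beta,\gamma$ be the largest value of $|W|$ occurring in state $10$, $01$, $11$ respectively (taken to be $-\infty$ if the state is unoccupied). First I would record that all three of $\max(D)$, $\max(D\ast v)$, $\max(D+v)$ are matroids: $D\ast v$ is a $\Delta$-matroid because pivoting preserves the class, $D+v$ is one by hypothesis, and for any $\Delta$-matroid $E$ the set system $\max(E)=\max(E\ast\varnothing)$ is a matroid. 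This disposes of the opening assertion.

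Next I would compute the common size of the bases of each matroid by splitting the feasible sets according to whether they contain $v$, using that loop complementation fixes the $v$-free sets while pivoting interchanges the two halves. This yields $m_{D}=\max(\alpha,\beta+1,\gamma+1)$, $m_{D\ast v}=\max(\alpha+1,\beta,\gamma+1)$ and $m_{D+v}=\max(\alpha+1,\beta+1,\gamma)$; the point is that each matroid leaves exactly one of $\alpha,\beta,\gamma$ un-incremented. From these formulas the status of $v$ can be read off: $v$ is a non-coloop of $\max(D)$, of $\max(D\ast v)$, or of $\max(D+v)$ precisely when $\alpha$, $\beta$, or $\gamma$ (respectively) strictly exceeds the other two. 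Since among three numbers at most one can be a strict maximum, at most one of the three matroids has $v$ as a non-coloop.

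The hard part will be ruling out ties, that is, proving that the maximum of $\{\alpha,\beta,\gamma\}$ is attained uniquely. This is exactly where both hypotheses must be used: a tie would force all three cardinalities to equal $M+1$ with $M=\max(\alpha,\beta,\gamma)$ and make $v$ a coloop of all three matroids, which is incompatible with the asserted strict nullity increase. I expect that the $\Delta$-matroid property of $D$ alone does not forbid a tie; the genuine input is that $\max(D\ast X)$ and $\max((D+v)\ast X)$ are equicardinal for \emph{every} $X$, not merely for $X=\varnothing$. The plan is to derive from a putative tie a pivot $X$ at which equicardinality of one of these families fails, equivalently a failure of symmetric exchange. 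Here I would be careful to test symmetric exchange in both orderings of a feasible pair (from the larger set toward the smaller one as well): a naive attempt to build a tie on a small ground set collapses precisely because of the reverse ordering, which is strong evidence that the full axioms force uniqueness.

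Once uniqueness holds, say $\alpha$ is the strict maximum, the remaining claims follow uniformly by comparing deletions of $v$. Because $\alpha>\beta$ and $\alpha>\gamma$, one checks that the bases of $\max(D)$ avoiding $v$, and the $v$-deleted bases of $\max(D\ast v)$ and of $\max(D+v)$, are in every case exactly the sets $W$ of size $\alpha$ in state $10$; hence $\max(D)-v=\max(D\ast v)-v=\max(D+v)-v$ as matroids on $V\setminus\{v\}$. Moreover $v$ is a non-coloop of $D_{2}:=\max(D)$ and a coloop of both $\max(D\ast v)$ and $\max(D+v)$; since a matroid in which $v$ is a coloop equals its deletion of $v$ with a coloop restored, $\max(D\ast v)=\max(D+v)=:D_{1}$ and $(D_{2}-v)\oplus U_{1,1}(\{v\})=D_{1}$. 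Finally $\nu(D_{2})=|V|-\alpha$ and $\nu(D_{1})=|V|-\alpha-1$ give $\nu(D_{2})=\nu(D_{1})+1$, completing the proof.
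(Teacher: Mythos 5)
You should first know that the paper does not prove this proposition at all: it is quoted as Theorem~14 of Brijder--Hoogeboom \cite{BH2}, so there is no internal argument to compare yours against. Judged on its own, your reduction is correct as far as it goes. The bookkeeping by states $10$, $01$, $11$ of the pairs $\bigl([\,W\in D\,],[\,W\cup\{v\}\in D\,]\bigr)$, the three rank formulas $\max(\alpha,\beta+1,\gamma+1)$, $\max(\alpha+1,\beta,\gamma+1)$, $\max(\alpha+1,\beta+1,\gamma)$, the criterion that $v$ is a non-coloop of exactly the matroid whose letter is a strict maximum, and the endgame (once a strict maximum exists, all three deletions of $v$ have the same bases, namely the largest $W$ in the winning state, whence the coloop decomposition and the nullity shift) all check out; I verified the three cases $\alpha$, $\beta$, $\gamma$ separately, and they do work uniformly even though the relevant state changes from case to case.

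The problem is that you never prove the one claim that carries the entire content of the proposition: that the maximum of $\{\alpha,\beta,\gamma\}$ is attained uniquely. If a tie occurred, all three matroids would have rank $M+1$ with $v$ a coloop of each, and then neither ``precisely two are equal'' nor $\nu(D_{2})=\nu(D_{1})+1$ could hold -- but observing that a tie contradicts the conclusion is circular, and your stated plan (locate a pivot $X$ at which equicardinality of $\max(D\ast X)$ or $\max((D+v)\ast X)$ fails) is only a plan, supported by the collapse of small examples rather than by an argument. This is precisely where both hypotheses must do real work. Concretely, a tie $\alpha=\beta=M=\max(\alpha,\beta,\gamma)$ supplies witnesses $W_{1}$ (with $W_{1}\in D$, $W_{1}\cup\{v\}\notin D$) and $W_{2}$ (with $W_{2}\notin D$, $W_{2}\cup\{v\}\in D$) of size $M$, together with the fact that every $W$ of size greater than $M$ has state $00$; one must then run symmetric exchange on pairs such as $(W_{1},\,W_{2}\cup\{v\})$ in $D$ and on the corresponding pairs in $D+v$, and iterate, to manufacture a contradiction. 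That is a genuine multi-step argument, not a verification, and until it is supplied the proposal does not establish the proposition.
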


Note that consequently, $\nu(\max(D))=\nu(\max(D\ast v))$ if and only if
$\max(D)=\max(D\ast v)$. Also, Theorem \ref{thm:ssil} tells us that if
$D\widetilde{-}v$ is proper, then $\max(D\widetilde{-}v+v)$ can replace
$\max(D\ast v)$ in Proposition \ref{prop:dist_char_min}: if $\max(D\ast
v)=D_{1}$ then $v$ is a coloop of $\max(D\ast v)=(\max(D\ast v))\widetilde
{/}v=\max(D\widetilde{-}v+v)$, and if $\max(D\ast v)=D_{2}$ then $\max(D\ast
v)$ and $(\max(D\ast v))\widetilde{/}v=(\max(D\ast v)/v)\oplus U_{1,1}(\{v\})$
are different matroids ($v$ is a coloop in the latter but not the former) with
the same nullity.

\subsection{Representing graphs by $\Delta$-matroids}

Let $G=(V,E)$ be a graph. Recall\ Definition \ref{gmatroid}: $\mathcal{D}_{G}$
is the set system $(V,\sigma)$ where $\sigma=\{X\subseteq V\mid\mathcal{A}%
(G)[X]$ is nonsingular over $GF(2)\}$. It is shown in \cite{Bu1} that
$\mathcal{D}_{G}$ is a normal $\Delta$-matroid (by convention, the empty
matrix is nonsingular). Moreover, if $G$ is a looped simple graph then given
$\mathcal{D}_{G}$, one can (re)construct $G$: $\{u\}$ is a loop in $G$ if and
only if $\{u\}\in\mathcal{D}_{G}$, and $\{u,v\}$ is an edge in $G$ if and only
if $(\{u,v\}\in\mathcal{D}_{G})\Delta((\{u\}\in\mathcal{D}_{G})\wedge
(\{v\}\in\mathcal{D}_{G}))$, see \cite[Property~3.1]{Bu3}. In this way, the
family of looped simple graphs with vertex-set $V$ can be considered as a
subset of the family of $\Delta$-matroids on the ground set $V$.

It is shown in \cite[Theorem~15]{BH2} that, for all $X\subseteq V$,
$d_{\mathcal{D}_{G}}(X)=\nu(\mathcal{A}(G)[X])$, where $\nu$ denotes
$GF(2)$-nullity. If $v$ is a looped vertex of $G$, then it is shown in
\cite{G} that $\mathcal{D}_{G}\ast v$ represents the graph $G^{v}$. Moreover,
if $v$ is a unlooped vertex of $G$, then $\mathcal{D}_{G}\bar{\ast}v$
represents the graph $G^{v}$ (see \cite{BH1}). In this way, $G^{v}$ may be
defined using $\Delta$-matroids. However, $\mathcal{D}_{G}\ast v$ on an
unlooped vertex $v$ and $\mathcal{D}_{G}\bar{\ast}v$ on a looped vertex $v$ do
\emph{not} represent graphs in general:

\begin{proposition}
[Remark below Theorem~22 in \cite{BH2}]\label{prop:vf_close_to_graph} Let $G$
be a graph, and $\varphi$ be any sequence of pivot, dual pivot and loop
complement operations on elements of $V(G)$. Then $\varnothing\in
(\mathcal{D}_{G})\varphi$ if and only if $(\mathcal{D}_{G})\varphi
=\mathcal{D}_{G^{\prime}}$ for some graph $G^{\prime}$.
\end{proposition}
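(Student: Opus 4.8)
The forward implication is immediate: any $\mathcal{D}_{G'}$ contains $\varnothing$, since the empty matrix is (vacuously) nonsingular, so $\mathcal{D}_{G'}$ is normal and hence $\varnothing\in(\mathcal{D}_G)\varphi$ whenever $(\mathcal{D}_G)\varphi=\mathcal{D}_{G'}$. The plan for the converse is to isolate the class of set systems that can occur and then characterize graphicness inside that class by a single nonsingularity condition. Call a set system \emph{matrix-pivotal} if it has the form $\mathcal{D}_A\ast X$ for a symmetric matrix $A$ over $GF(2)$ and some $X\subseteq V$, where $\mathcal{D}_A=\{Y\mid A[Y]\text{ is nonsingular}\}$. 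Since the symmetric $GF(2)$-matrices are exactly the Boolean adjacency matrices of looped simple graphs, every graphic set system $\mathcal{D}_{G'}=\mathcal{D}_{\mathcal{A}(G')}\ast\varnothing$ is matrix-pivotal. The key observation is that a matrix-pivotal set system $D=\mathcal{D}_A\ast X$ represents a graph if and only if it is normal: by the identity $\varnothing\in D\ast X\iff X\in D$ recalled earlier, $\varnothing\in\mathcal{D}_A\ast X$ iff $X\in\mathcal{D}_A$ iff $A[X]$ is nonsingular, and when $A[X]$ is nonsingular the principal pivot transform yields $\mathcal{D}_A\ast X=\mathcal{D}_{A\ast X}$, which is graphic.

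It therefore suffices to show that the matrix-pivotal set systems are closed under the three operations: then $(\mathcal{D}_G)\varphi$ is matrix-pivotal (starting from $\mathcal{D}_G=\mathcal{D}_{\mathcal{A}(G)}\ast\varnothing$), and the preceding observation finishes the argument. First I would dispose of pivot and dual pivot. Because pivot satisfies $\ast X\ast Y=\ast(X\Delta Y)$, the set-pivot $\ast X$ is the composite of the single pivots $\ast x$ $(x\in X)$, and $\mathcal{D}_A\ast X\ast v=\mathcal{D}_A\ast(X\Delta\{v\})$ is again matrix-pivotal. Since $\bar{\ast}v=+v\ast v+v$, only loop complementation remains. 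When $v\notin X$, each $\ast x$ commutes with $+v$ (distinct elements), so $+v$ commutes with $\ast X$ and $\mathcal{D}_A\ast X+v=\mathcal{D}_{A+E_{vv}}\ast X$, where $E_{vv}$ toggles the $(v,v)$ entry and $A+E_{vv}$ is again symmetric.

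The one substantive case — and the step I expect to be the main obstacle — is loop complementation at a pending pivot, namely $\mathcal{D}_A\ast X+v$ with $v\in X$. Here I would write $X=X'\cup\{v\}$, commute $\ast X'$ out of the way, and analyze $\ast v+v$ at $v$ directly on the matrix. If $A_{vv}=1$ the single pivot $A\ast v$ is defined and $\mathcal{D}_A\ast X+v=\mathcal{D}_{(A\ast v)+E_{vv}}\ast X'$, which is matrix-pivotal. If $A_{vv}=0$ one checks instead that $\mathcal{D}_A\ast X+v=\mathcal{D}_{A+c_vc_v^{tr}}\ast X$, where $c_v$ is the column of $A$ indexed by $v$; the rank-one update $c_vc_v^{tr}$ is symmetric (and, pleasantly, amounts to toggling loops and edges among the neighbors of $v$, i.e. a local-complementation effect), so the result is again matrix-pivotal. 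Establishing these two matrix identities — equivalently, that loop complementation acts, even through a pending pivot, as a symmetry-preserving principal-pivot/rank-one transform of the representing matrix — is the crux. The hard part will be exactly this verification; it can be carried out by a direct computation from the set-system definition of $+v$, or by invoking the correspondence between these vertex flips and transformations of the representing symmetric matrix developed in \cite{BH1}.

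Granting the closure, we conclude as follows. For any sequence $\varphi$ of single-element pivots, dual pivots, and loop complements we obtain $(\mathcal{D}_G)\varphi=\mathcal{D}_{A'}\ast X'$ for some symmetric $A'$ and some $X'\subseteq V$, and by the key observation this set system represents a graph precisely when it is normal. This is exactly the asserted equivalence $\varnothing\in(\mathcal{D}_G)\varphi\iff(\mathcal{D}_G)\varphi=\mathcal{D}_{G'}$.
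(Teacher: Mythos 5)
The paper does not prove this proposition at all --- it is imported verbatim as the ``Remark below Theorem~22'' of \cite{BH2} --- so there is no in-paper argument to compare against; what you have written is a genuine reconstruction, and it is essentially correct. Your reduction is sound: the invariant ``$(\mathcal{D}_G)\varphi$ has the form $\mathcal{D}_A\ast X$ for a symmetric $A$'' is preserved by each single-vertex flip, and the equivalence ``normal $\iff$ graphic'' for such set systems is exactly Tucker's principal pivot identity $\mathcal{D}_A\ast X=\mathcal{D}_{A\ast X}$ when $A[X]$ is nonsingular (note you do need, and get for free over $GF(2)$, that the principal pivot transform of a symmetric matrix is again symmetric). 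The two ``crux'' identities you flag are not really new obstacles: they are conjugates of facts the paper already quotes. Since $+v$ and $\ast v$ generate a group of order $6$ in which both are involutions, $\ast v+v\ast v=+v\ast v+v=\bar{\ast}v$; hence your claimed identity $\mathcal{D}_A\ast v+v=\mathcal{D}_{A+c_vc_v^{tr}}\ast v$ for $A_{vv}=0$ is, after composing with $\ast v$, precisely the statement $\mathcal{D}_A\bar{\ast}v=\mathcal{D}_{A+c_vc_v^{tr}}$, i.e.\ that dual pivot at an unlooped vertex realizes local complementation --- which the paper cites from \cite{BH1}. Likewise the $A_{vv}=1$ case is Geelen's result that $\mathcal{D}_G\ast v=\mathcal{D}_{G^v}$ for looped $v$, followed by $\mathcal{D}_{G+v}=\mathcal{D}_G+v$. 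So your argument closes once you either cite these or verify the $S_3$ relation directly; as written it is a correct proof modulo standard quoted facts, and arguably more self-contained in spirit than the paper's bare citation.
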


In contrast, $\max((\mathcal{D}_{G})\varphi)$ \emph{does} always have a graph representation.

\begin{theorem}
Let $G$ be a graph, and let $\varphi$ be any sequence of pivot, dual pivot and
local complement operations on elements of $V(G)$. Then $\max((\mathcal{D}%
_{G})\varphi)$ is a binary matroid.
\end{theorem}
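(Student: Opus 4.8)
The plan is to show that, although the set system $D:=(\mathcal{D}_{G})\varphi$ need in general be neither normal nor even a $\Delta$-matroid, one may apply a single \emph{dual pivot} --- an operation that leaves $\max$ unchanged --- so as to bring $D$ into normal form, whereupon Proposition~\ref{prop:vf_close_to_graph} identifies it as the $\Delta$-matroid of an honest graph and part~1 of Theorem~\ref{matdmat} finishes the job. The whole argument thus rests on the identity $\max(D)=\max(D\bar{\ast}X)$ combined with the freedom to choose $X$ wisely. (A direct attack is blocked precisely because $\max$ of an operation is not determined by $\max$ of its input, so one cannot simply induct on the length of $\varphi$.)

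Concretely, I would argue as follows. Each of pivot, dual pivot and loop complementation is an involution on the family of all set systems over $V$ and fixes the unique improper system (the one with $\sigma=\varnothing$); hence each is a bijection carrying proper systems to proper systems, and since $\mathcal{D}_{G}$ is normal, $D=(\mathcal{D}_{G})\varphi$ is proper. Choose a set $Z_{0}$ that is minimal in $D$ with respect to inclusion. Since no member of $D$ is a proper subset of $Z_{0}$, the count $|\{Z\in D\mid\varnothing\subseteq Z\subseteq Z_{0}\}|=|\{Z_{0}\}|=1$ is odd, so the explicit description of the dual pivot yields $\varnothing\in D\bar{\ast}Z_{0}$; that is, $D\bar{\ast}Z_{0}$ is normal. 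Now $D\bar{\ast}Z_{0}=(\mathcal{D}_{G})(\varphi\,\bar{\ast}Z_{0})$ is again obtained from $\mathcal{D}_{G}$ by a sequence of pivots, dual pivots and loop complements and is normal, so Proposition~\ref{prop:vf_close_to_graph} furnishes a graph $G''$ with $D\bar{\ast}Z_{0}=\mathcal{D}_{G''}$. Combining the $\max$-invariance of the dual pivot with Theorem~\ref{matdmat},
\[
\max(D)=\max(D\bar{\ast}Z_{0})=\max(\mathcal{D}_{G''})=\mathcal{B}(M_{A}(G'')),
\]
which is the family of bases of the binary matroid $M_{A}(G'')$, as required.

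The conceptual crux --- and the step I expect to demand the most care --- is the normalization in the middle: one must exploit that the dual pivot does not disturb $\max$ (so it is essentially ``free'' for our purposes) while simultaneously arranging that it restores $\varnothing$ to the family. The device that achieves both at once is the choice of $Z_{0}$ as a \emph{minimal} feasible set, which forces the relevant parity to be $1$ no matter how badly $D$ fails to be normal or to satisfy the symmetric exchange axiom. In the write-up I would verify two small points: that the set-indexed dual pivot $\bar{\ast}Z_{0}$ really is the composition of the single-element dual pivots $\bar{\ast}v$ for $v\in Z_{0}$, so that $\varphi\,\bar{\ast}Z_{0}$ is a legitimate input for Proposition~\ref{prop:vf_close_to_graph}; and that the identity $\max(D)=\max(D\bar{\ast}X)$ is invoked in its stated generality for arbitrary set systems, not merely for $\Delta$-matroids.
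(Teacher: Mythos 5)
Your proposal is correct and follows essentially the same route as the paper's own proof: choose a minimal feasible set, observe that the parity count in the definition of the dual pivot forces $\varnothing$ into $D\bar{\ast}X$, invoke Proposition~\ref{prop:vf_close_to_graph} to realize the result as $\mathcal{D}_{G''}$, and conclude via $\max(D)=\max(D\bar{\ast}X)$. The only additions beyond the paper's argument are your (correct) verifications that $D$ remains proper and that the set-indexed dual pivot decomposes into single-element ones.
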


\begin{proof}
Let $D=(\mathcal{D}_{G})\varphi$. Let $X\in\min(D)$. Then $\varnothing\in
D\bar{\ast}X$ if and only if $|\{Z\in D\mid Z\subseteq X\}|$ is odd. Since
$\{Z\in D\mid Z\subseteq X\}=\{X\}$ by definition of $X$, we have
$\varnothing\in D\bar{\ast}X=(\mathcal{D}_{G})\varphi\bar{\ast}X$. By
Proposition~\ref{prop:vf_close_to_graph}, $(\mathcal{D}_{G})\varphi\bar{\ast
}X=\mathcal{D}_{G^{\prime}}$ for some graph $G^{\prime}$. Thus, $\max
((\mathcal{D}_{G})\varphi)=\max((\mathcal{D}_{G})\varphi\bar{\ast}%
X)=\max(\mathcal{D}_{G^{\prime}})$ and we are done.
\end{proof}

For convenience, we define the \emph{pivot} of a vertex $v$ on a graph $G$,
denoted $G\ast v$, by $G^{v}$ if $v$ is looped, and it is not defined
otherwise. Similarly, we define the \emph{dual pivot} of vertex $v$ on $G$,
denoted $G\bar{\ast}v$, by $G^{v}$ if $v$ is unlooped, and it is not defined
otherwise. For a graph, \emph{loop complementation} of a vertex $v\in V(G)$,
denoted by $G+v$, toggles the existence of a loop on $v$. I.e., $v$ is a
looped vertex of $G$ iff $v$ is not a looped vertex of $G+v$. It is shown in
\cite{BH1} that $\mathcal{D}_{G+v}=\mathcal{D}_{G}+v$ (i.e., loop
complementation for $\Delta$-matroids generalizes loop complementation for graphs).

It is easy to verify that for each $v\in V(G)$, $\mathcal{D}_{G-v}%
=\mathcal{D}_{G}-v$. Theorem \ref{matdmat} of the introduction follows readily
from this easy observation and the strong principal minor theorem \cite{K}
(see also Theorem \ref{spmt} above). As we will see in the following sections,
the equality%
\[
\max(\mathcal{D}_{G})=M_{A}(G)
\]
(where $M_{A}(G)$ is described by its family of bases) allows us to give
various results stated in the introduction completely different proofs, using
$\Delta$-matroids rather than linear algebra over $GF(2)$.

\section{Deletion/contraction and $\min$/$\max$ for $\Delta$-matroids}

\label{sec:del_con_minmax} In this section we show, under the assumption of
some mild conditions, that both contraction and deletion commute with both the
$\min$ and the $\max$ operation for $\Delta$-matroids. In fact, some results
hold for set systems in general. We will apply these results to graphs in the
next section.

Let $D$ be a set system. The notions of loop and coloop for matroids
(described by their families of bases) may be directly generalized to set
systems. An element $v\in V$ is called a \emph{coloop} of $D$ if $v\in X$ for
each $X\in D$. Clearly, $v$ is a coloop of $D$ if and only if $D-v$ is not
proper. Similarly, $v\in V$ is called a \emph{loop} of $D$ if $v$ is a coloop
of $D\ast v$, i.e., $v\not \in X$ for each $X\in D$.

We first show that the $\min$ operation and the deletion operation on an
element $v$ commute for proper set systems $D$, provided that $D-v$ is proper.
Note that $D-v$ is proper, i.e., $v$ is not a coloop of $D$, if and only if
$v$ is not a coloop of $\min(D)$.

\begin{theorem}
\label{thm:min_delete} Let $D$ be a proper set system, and let $v \in V$ such
that $D - v$ is proper. Then $\min(D) - v = \min(D - v)$.
\end{theorem}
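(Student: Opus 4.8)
The plan is to reduce the claimed identity of set systems to a single set-theoretic equality of families of subsets, and then verify that equality by a direct double-inclusion argument. Writing $D=(V,\sigma)$ and $\tau=\{Y\in\sigma\mid v\notin Y\}$, I first unwind the definitions. By the definition of deletion, $D-v=(V\setminus\{v\},\tau)$, so $\min(D-v)=(V\setminus\{v\},\min(\tau))$. On the other hand $\min(D)=(V,\min(\sigma))$, and applying deletion to it gives $\min(D)-v=(V\setminus\{v\},\{Y\in\min(\sigma)\mid v\notin Y\})$. Both sides are therefore set systems on the same ground set $V\setminus\{v\}$, so it suffices to prove
\[
\{Y\in\min(\sigma)\mid v\notin Y\}=\min(\tau).
\]

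The observation driving the argument is that the property ``$v\notin Y$'' is inherited by subsets: if $Z\subseteq Y$ and $v\notin Y$, then $v\notin Z$. Consequently, whenever $Y$ avoids $v$, a set $Z\in\sigma$ with $Z\subseteq Y$ is automatically a member of $\tau$. This is exactly what forces minimality within $\sigma$ and minimality within $\tau$ to coincide on the $v$-avoiding sets.

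For the inclusion $\subseteq$, I would take $Y\in\min(\sigma)$ with $v\notin Y$; then $Y\in\tau$, and any $Z\in\tau$ with $Z\subseteq Y$ lies in $\sigma$, so minimality of $Y$ in $\sigma$ gives $Z=Y$, whence $Y\in\min(\tau)$. For the reverse inclusion $\supseteq$, I would take $Y\in\min(\tau)$; then $v\notin Y$ and $Y\in\sigma$, and any $Z\in\sigma$ with $Z\subseteq Y$ satisfies $v\notin Z$ by the inherited-avoidance observation, hence $Z\in\tau$, so minimality of $Y$ in $\tau$ gives $Z=Y$. Thus $Y\in\min(\sigma)$ with $v\notin Y$, completing the equality.

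I do not expect any genuine obstacle here: the only point requiring care is the bookkeeping in unwinding $\min(D)-v$ versus $\min(D-v)$ and confirming they share the ground set $V\setminus\{v\}$. The properness hypotheses on $D$ and $D-v$ contribute nothing to the set equality itself; they serve only to guarantee that $\tau$ (and hence $\min(\tau)$) is nonempty, so that the two resulting set systems are proper. The entire content of the lemma is captured by the downward closure of the condition $v\notin Y$.
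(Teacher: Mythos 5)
Your proof is correct and follows essentially the same route as the paper's: a direct double-inclusion argument resting on the observation that the condition $v\notin Y$ is inherited by subsets, so minimality in $\sigma$ and in $\tau$ coincide on the $v$-avoiding sets. The only cosmetic difference is in the reverse inclusion, where the paper picks a minimal $Y\subseteq X$ in $\min(D)$ and concludes $X=Y$, while you verify minimality of $X$ in $\sigma$ directly; both are valid and equivalent for finite set systems.
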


\begin{proof}
Since $D - v$ is proper, $\min(D - v)$ is well defined. Let $X \in\min(D) -
v$. Then $X \in\min(D)$ and $v \not \in X$. Hence, $X \in\min(D - v)$.
Conversely, if $X \in\min(D - v)$, then $X \in D$ and $v \not \in X$. Let $Y
\subseteq X$ with $Y \in\min(D)$. Then clearly, $v \not \in Y$ and thus $Y
\in\min(D - v)$. Hence $X = Y$ and $X \in\min(D)$. Therefore, $X \in\min(D) -
v$.
\end{proof}

Next, we show that the $\max$ operation and the deletion operation on a
element $v$ commute for $\Delta$-matroids $D$, provided that $v$ is not a
coloop of $\max(D)$.

\begin{theorem}
\label{thm:max_delete} Let $D$ be a $\Delta$-matroid, and let $v \in V$ such
that $v$ is not a coloop of $\max(D)$. Then $\max(D) - v = \max(D - v)$.
\end{theorem}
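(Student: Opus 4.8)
The plan is to prove the two inclusions of set systems on $V-v$ separately, treating $\max(D)-v\subseteq\max(D-v)$ as essentially formal and putting all the content into the reverse inclusion. First I would record the structural facts to be used. Since $v$ is not a coloop of $\max(D)$, some maximal member of $D$ avoids $v$; in particular some member of $D$ avoids $v$, so $v$ is not a coloop of $D$, i.e.\ $D-v$ is proper. By the characterization recalled above (a proper deletion of a $\Delta$-matroid is again a $\Delta$-matroid), $D-v$ is then a $\Delta$-matroid, so both $\max(D)$ and $\max(D-v)$ are equicardinal, each being of the form $\max(D'\ast\varnothing)$ for a $\Delta$-matroid $D'$. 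I would write $s$ for the common cardinality of the members of $\max(D)$.

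For the easy inclusion, I would take $Y\in\max(D)-v$, so that $Y$ is a maximal member of $D$ with $v\notin Y$. Then no member of $D$ avoiding $v$ can properly contain $Y$, so $Y\in\max(D-v)$. This also shows $\max(D-v)$ is nonempty and contains a set of size $s$, namely any maximal member of $D$ avoiding $v$ (one exists precisely because $v$ is not a coloop of $\max(D)$). Since $\max(D-v)$ is equicardinal, every one of its members then has cardinality exactly $s$.

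For the reverse inclusion I would take $Y\in\max(D-v)$, so $Y\in D$, $v\notin Y$, and $|Y|=s$ by the previous paragraph. The claim is that $Y$ is in fact maximal in all of $D$: every member of $D$ extends to a maximal member, every maximal member of $D$ has size $s$, and a size-$s$ member of $D$ contained in a maximal member of size $s$ must coincide with it. Hence $Y\in\max(D)$, and as $v\notin Y$ this gives $Y\in\max(D)-v$, completing the argument.

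The main obstacle is the reverse inclusion, and specifically the size bookkeeping: a priori a set that is maximal among the members of $D$ avoiding $v$ need not be maximal in $D$ at all, and could even have cardinality strictly less than $s$, in which case it would fail to be a basis of $\max(D)$. The device that removes this obstacle is the equicardinality of $\max(D-v)$, which is exactly what the $\Delta$-matroid hypothesis on $D$ (via properness of $D-v$) provides. Once every set maximal-among-those-avoiding-$v$ is forced to the full size $s$, the two set systems collapse together. I expect this to be the only genuinely nontrivial point; everything else is the standard observation that in an equicardinal family a full-sized member is automatically maximal.
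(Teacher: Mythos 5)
Your proof is correct and follows essentially the same route as the paper's: both establish that $D-v$ is proper (hence a $\Delta$-matroid), dispose of the inclusion $\max(D)-v\subseteq\max(D-v)$ directly, and then use the equicardinality of $\max(D-v)$ together with a maximal member of $D$ avoiding $v$ to force every set in $\max(D-v)$ up to the common size $s$, whence equicardinality of $\max(D)$ gives maximality in $D$. The only difference is expository: you isolate the size bookkeeping as the crux, which the paper leaves slightly more implicit.
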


\begin{proof}
Since $v$ is not a coloop of $\max(D)$, there is a $Z \in\max(D)$ with $v
\not \in Z$. Therefore $D - v$ is proper, and so $\max(D - v)$ is well
defined. Let $X \in\max(D) - v$. Then $X \in\max(D)$ and $v \not \in X$.
Hence, $X \in\max(D - v)$. Conversely, if $X \in\max(D - v)$, then $X \in D$
and $v \not \in X$, and $X$ is maximal with this property. As $v$ is not a
coloop of $\max(D)$, there is a $Z \in\max(D)$ with $v \not \in Z$. Therefore,
$Z \in\max(D) - v$. By the first part of this proof, $Z \in\max(D - v)$. Now,
as $D - v$ is a $\Delta$-matroid, $\max(D - v)$ is equicardinal and so $|Z| =
|X|$. Moreover, since $D$ is a $\Delta$-matroid, $\max(D)$ is equicardinal,
therefore $X \in\max(D)$ and so, $X \in\max(D) - v$.
\end{proof}

The next example illustrates that Theorem~\ref{thm:max_delete} does not hold
for set systems in general. This in contrast with Theorem~\ref{thm:min_delete}%
, which \emph{does} hold for set systems in general.

\begin{example}
Let $D = (V,D)$ be a set system with $V = \{u,v,w\}$ and $D = \{
\{u\},\{v\},\{v,w\} \}$. Then $w$ is not a coloop of $\max(D) = (V,\{
\{u\},\{v,w\} \})$, and $\max(D) - w = (\{u,v\},\{\{u\}\})$ while $\max(D - w)
= (\{u,v\},\{\{u\},\{v\}\})$.
\end{example}

We formulate now the max (min, resp.) \textquotedblleft
counterparts\textquotedblright\ of Theorem~\ref{thm:min_delete}
(Theorem~\ref{thm:max_delete}, resp.). These results show that contraction
commutes with the $\min$ and $\max$ operations.

\begin{theorem}
\label{thm:comm_contract_minmax} Let $D$ be a proper set system and $v \in V$.

\begin{enumerate}
\item If $v$ is not a loop of $D$, then $\max(D)\ast v-v=\max(D\ast v-v)$.

\item If $D$ is moreover a $\Delta$-matroid and $v$ is not a loop of $\min
(D)$, then $\min(D)*v - v = \min(D*v - v)$.
\end{enumerate}
\end{theorem}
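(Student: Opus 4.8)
The plan is to read both identities as statements about contraction, since $D/v = D\ast v - v$ by definition; that is, writing $E/v$ for $E\ast v - v$, part~1 asserts $\max(D)/v = \max(D/v)$ and part~2 asserts $\min(D)/v = \min(D/v)$. I would then reduce these to Theorems~\ref{thm:min_delete} and~\ref{thm:max_delete} by means of the contraction--deletion duality supplied by the pivot $\ast V$, which exchanges $\min$ and $\max$ via the identities $\min(E) = \max(E\ast W)\ast W$ and $\max(E) = \min(E\ast W)\ast W$ for a set system $E$ on ground set $W$. This is exactly the framing suggested by the phrase ``max (min, resp.) counterparts.''

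The key preliminary step is the set-system identity
\[
(D/v)\ast(V-v) = (D\ast V) - v,
\]
equivalently $D/v = \big((D\ast V)-v\big)\ast(V-v)$. I would verify this by a direct computation: the sets of $D/v$ are exactly the $Y\setminus\{v\}$ with $v\in Y\in D$, and complementing such a set inside $V-v$ yields $V\setminus Y$, which is precisely the generic member of $(D\ast V)-v$ (the sets $V\setminus Y$ with $v\notin V\setminus Y$, i.e.\ $v\in Y$). This identity holds for every set system, so it applies equally to $D$, to $\min(D)$, and to $\max(D)$. Alongside it I would record the translation of the side conditions: $v$ is a coloop of $D\ast V$ iff $v$ is a loop of $D$, and $v$ is a coloop of $\max(D\ast V)=\min(D)\ast V$ iff $v$ is a loop of $\min(D)$; both follow at once since $\ast V$ complements every set.

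With these in hand the two parts fall out formally. For part~1, applying the $\min/\max$ duality on the ground set $V-v$, then the identity, then Theorem~\ref{thm:min_delete} to $D\ast V$ (whose hypothesis ``$(D\ast V)-v$ proper'' is exactly ``$v$ not a loop of $D$''), gives
\[
\max(D/v) = \min\big((D\ast V)-v\big)\ast(V-v) = \big(\min(D\ast V)-v\big)\ast(V-v),
\]
while the same identity applied to $E=\max(D)$, together with $\max(D)\ast V=\min(D\ast V)$, gives $\max(D)/v=\big(\min(D\ast V)-v\big)\ast(V-v)$; the two right-hand sides coincide. For part~2 I would run the identical chain with the roles of $\min$ and $\max$ interchanged, invoking Theorem~\ref{thm:max_delete} in place of Theorem~\ref{thm:min_delete}; this is where the extra hypothesis enters, since Theorem~\ref{thm:max_delete} requires that $D\ast V$ be a $\Delta$-matroid (true because pivot preserves the $\Delta$-matroid property) and that $v$ not be a coloop of $\max(D\ast V)$, which is exactly ``$v$ not a loop of $\min(D)$.'' I expect the only genuine work to be the verification of the duality identity and the two condition-translations; once those are pinned down the equalities are a formal chase. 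The contrast between the parts---part~1 needing nothing beyond a proper set system while part~2 needs the full $\Delta$-matroid hypothesis---then simply mirrors the contrast between Theorems~\ref{thm:min_delete} and~\ref{thm:max_delete} themselves.
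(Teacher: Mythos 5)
Your proposal is correct and follows essentially the same route as the paper: both reduce the claim to Theorems~\ref{thm:min_delete} and~\ref{thm:max_delete} by using the pivot $\ast V$ to exchange $\min$ with $\max$, contraction with deletion, and loops with coloops. Your explicitly verified identity $(D/v)\ast(V\setminus\{v\})=(D\ast V)-v$ is just a repackaging of the paper's chain of commutation identities for vertex flips, and your translations of the side conditions match the paper's exactly.
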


\begin{proof}
We start by showing the first result. We have $\max(D)\ast v-v=\min(D\ast
V)\ast V\ast v-v=\min(D\ast V)-v\ast(V\backslash\{v\})$. Now, $v$ is not a
coloop of $D\ast V$. Thus, $D\ast V-v$ is proper. By
Theorem~\ref{thm:min_delete}, $\min(D\ast V)-v\ast(V\backslash\{v\})=\min
(D\ast V-v)\ast(V\backslash\{v\})=\min(D\ast v-v\ast(V\backslash
\{v\}))\ast(V\backslash\{v\})=\max(D\ast v-v)$. The proof of the second result
is essentially identical to that of the first result. We have $\min(D)\ast
v-v=\max(D\ast V)\ast V\ast v-v=\max(D\ast V)-v\ast(V\backslash\{v\})$. Now,
$v$ is not a coloop of $\min(D)\ast V=\max(D\ast V)$. By
Theorem~\ref{thm:max_delete}, $\max(D\ast V)-v\ast(V\backslash\{v\})=\max
(D\ast V-v)\ast(V\backslash\{v\})=\max(D\ast v-v\ast(V\backslash
\{v\}))\ast(V\backslash\{v\})=\min(D\ast v-v)$.
\end{proof}

\section{From $\Delta$-matroids to graphs}

\label{sec:dmat_alt_proofs_mat} In this section we use results of Sections
\ref{sec:recall_ss_dm} and \ref{sec:del_con_minmax} to give new proofs of
several theorems about adjacency matroids stated earlier in the paper. These
proofs are fundamentally different from the earlier ones, as they are
combinatorial and do not involve matrices. Recall that if $G$ is a graph then
$\mathcal{D}_{G}$ is a $\Delta$-matroid with $M_{A}(G)=\max(\mathcal{D}_{G})$;
$\mathcal{D}_{G}$ is normal, so no $v\in V$ is a coloop of $\mathcal{D}_{G}.$

The following three results are quite straightforward consequences of the fact
that for $\mathcal{D}_{G}$, $\max$ commutes with both deletion (of
non-coloops) and contraction (of non-loops), cf. Theorems~\ref{thm:max_delete}
and \ref{thm:comm_contract_minmax}.

\begin{theorem}
\label{thm:mat_del} If $v$ is not a coloop of $M_{A}(G)$, then $M_{A}%
(G)-v=M_{A}(G-v)$.
\end{theorem}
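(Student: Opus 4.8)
The plan is to derive Theorem~\ref{thm:mat_del} as an immediate consequence of the $\Delta$-matroid machinery developed in Sections~\ref{sec:recall_ss_dm} and \ref{sec:del_con_minmax}, rather than repeating the linear-algebraic argument from Section~5. The key identities are $M_{A}(G)=\max(\mathcal{D}_{G})$ and $\mathcal{D}_{G-v}=\mathcal{D}_{G}-v$, both recorded earlier in the excerpt. The whole theorem then reduces to showing that the $\max$ operation commutes with deletion of a non-coloop, which is exactly Theorem~\ref{thm:max_delete}.

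First I would unwind the hypothesis. We are told $v$ is not a coloop of $M_{A}(G)=\max(\mathcal{D}_{G})$; this is precisely the hypothesis required to invoke Theorem~\ref{thm:max_delete} for the $\Delta$-matroid $D=\mathcal{D}_{G}$. (Since $\mathcal{D}_{G}$ is a normal $\Delta$-matroid, it is in particular a $\Delta$-matroid, so the hypothesis of Theorem~\ref{thm:max_delete} is met.) Applying that theorem gives $\max(\mathcal{D}_{G})-v=\max(\mathcal{D}_{G}-v)$. Next I would substitute the two translation identities: the left side is $M_{A}(G)-v$ by definition of $M_{A}$, and on the right side I rewrite $\mathcal{D}_{G}-v=\mathcal{D}_{G-v}$, so that $\max(\mathcal{D}_{G}-v)=\max(\mathcal{D}_{G-v})=M_{A}(G-v)$. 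Chaining these equalities yields $M_{A}(G)-v=M_{A}(G-v)$, which is the assertion.

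There is essentially no obstacle here; the content of the theorem has been front-loaded into Theorem~\ref{thm:max_delete} and the dictionary $\max(\mathcal{D}_{G})=M_{A}(G)$, $\mathcal{D}_{G-v}=\mathcal{D}_{G}-v$. The one point worth a sentence of care is matching up the two notions of ``coloop.'' The statement of Theorem~\ref{thm:mat_del} says $v$ is not a coloop of the matroid $M_{A}(G)$, while Theorem~\ref{thm:max_delete} asks that $v$ not be a coloop of $\max(D)$; since these are literally the same object, $\max(\mathcal{D}_{G})=M_{A}(G)$, there is nothing to reconcile. I would simply note in passing that $\mathcal{D}_{G}$ being normal means $v$ is never a coloop of $\mathcal{D}_{G}$ itself, so the relevant coloop condition is genuinely about $\max(\mathcal{D}_{G})$ and not about $\mathcal{D}_{G}$—which is why we invoke Theorem~\ref{thm:max_delete} (the $\max$ statement) rather than Theorem~\ref{thm:min_delete}.

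In short, the proof is a three-line rewrite: invoke Theorem~\ref{thm:max_delete} for $\mathcal{D}_{G}$, then translate both sides through $M_{A}=\max\circ\mathcal{D}_{(-)}$ using $\mathcal{D}_{G-v}=\mathcal{D}_{G}-v$. This gives Theorem~\ref{deldel} a second, purely combinatorial proof, as promised in the paragraph opening Section~\ref{sec:dmat_alt_proofs_mat}.
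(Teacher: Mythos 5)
Your proof is correct and is essentially identical to the paper's own argument: both invoke Theorem~\ref{thm:max_delete} for $D=\mathcal{D}_{G}$ and then translate via $M_{A}(G)=\max(\mathcal{D}_{G})$ and $\mathcal{D}_{G-v}=\mathcal{D}_{G}-v$. Nothing is missing.
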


\begin{proof}
If $v$ is not a coloop of $M_{A}(G)$, then $M_{A}(G) - v = \max(\mathcal{D}%
_{G}) - v$. By Theorem~\ref{thm:max_delete}, $\max(\mathcal{D}_{G}) - v =
\max(\mathcal{D}_{G} - v) = \max(\mathcal{D}_{G - v}) = M_{A}(G - v)$.
\end{proof}

\begin{theorem}
\label{thm:mat_contr_loop} If $v\in V(G)$ is a looped vertex, then
$M_{A}(G)/v=M_{A}(G^{v}-v)$.
\end{theorem}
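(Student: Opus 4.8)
The plan is to mirror exactly the $\Delta$-matroid proof of Theorem~\ref{thm:mat_del}, reducing the statement to the identity $M_{A}(G)=\max(\mathcal{D}_{G})$ together with the commutation of $\max$ with $\Delta$-matroid contraction recorded in Theorem~\ref{thm:comm_contract_minmax}. The one genuinely new ingredient is the fact (attributed to \cite{G}) that for a \emph{looped} vertex $v$ the pivot satisfies $\mathcal{D}_{G}\ast v=\mathcal{D}_{G^{v}}$; this is the bridge that brings local complementation onto the $\Delta$-matroid side, and it is the analogue of the role played by $\mathcal{D}_{G-v}=\mathcal{D}_{G}-v$ in the deletion case.

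First I would record the relevant loop/coloop status of $v$. Since $v$ is looped, $\mathcal{A}(G)[\{v\}]$ is nonsingular, so $\{v\}\in\mathcal{D}_{G}$; hence $v$ is neither a coloop nor a loop of $\mathcal{D}_{G}$, and in particular $v$ is not a loop of $M_{A}(G)=\max(\mathcal{D}_{G})$. Because matroid contraction agrees with $\Delta$-matroid contraction for non-loops (Section~\ref{sec:recall_ss_dm}), and $\Delta$-matroid contraction is $D\ast v-v$, I may write $M_{A}(G)/v=\max(\mathcal{D}_{G})\ast v-v$. Next I would invoke part~1 of Theorem~\ref{thm:comm_contract_minmax}, whose hypothesis ``$v$ is not a loop of $\mathcal{D}_{G}$'' was just verified, to pull $\max$ outside: $\max(\mathcal{D}_{G})\ast v-v=\max(\mathcal{D}_{G}\ast v-v)$. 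Then I would substitute $\mathcal{D}_{G}\ast v=\mathcal{D}_{G^{v}}$ and use $\mathcal{D}_{G^{v}}-v=\mathcal{D}_{G^{v}-v}$ to obtain $\max(\mathcal{D}_{G^{v}-v})=M_{A}(G^{v}-v)$. Chaining these equalities yields $M_{A}(G)/v=M_{A}(G^{v}-v)$.

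The only delicate point — there is no real combinatorial obstacle — is the bookkeeping distinguishing loops, coloops, and the several notions of contraction. Specifically I must (i) confirm that the circuit-theoretic contraction of Definition~\ref{contr} coincides with $D\ast v-v$ for the non-loop $v$, as asserted in Section~\ref{sec:recall_ss_dm}, and (ii) ensure that the hypothesis of Theorem~\ref{thm:comm_contract_minmax} is phrased in terms of $v$ not being a loop of $\mathcal{D}_{G}$ (equivalently, $\{v\}\in\mathcal{D}_{G}$) rather than any condition on the matroid $M_{A}(G)$ alone. Both reduce to the single observation $\{v\}\in\mathcal{D}_{G}$ that follows from $v$ being looped. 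Once these identifications are in place the argument is immediate, since all the substantive work — the equicardinality arguments behind the commutation of $\max$ with deletion and contraction — has already been carried out in Section~\ref{sec:del_con_minmax}.
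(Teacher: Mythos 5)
Your proposal is correct and follows essentially the same route as the paper's own proof of Theorem~\ref{thm:mat_contr_loop}: observe $\{v\}\in\mathcal{D}_{G}$ so $v$ is not a loop of $\mathcal{D}_{G}$ or of $\max(\mathcal{D}_{G})$, write $M_{A}(G)/v=\max(\mathcal{D}_{G})\ast v-v$, apply Theorem~\ref{thm:comm_contract_minmax} to commute $\max$ past the contraction, and use $\mathcal{D}_{G}\ast v=\mathcal{D}_{G^{v}}$ for looped $v$ together with $\mathcal{D}_{G^{v}}-v=\mathcal{D}_{G^{v}-v}$. The bookkeeping points you flag are exactly the ones the paper relies on, so nothing is missing.
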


\begin{proof}
Since $\{v\}\in\mathcal{D}_{G}$, $v$ is not a loop of $\mathcal{D}_{G}$.
Consequently, $v$ is not a loop of $\max(\mathcal{D}_{G})$. We have therefore
$M_{A}(G)/v=\max(\mathcal{D}_{G})\ast v-v$. By
Theorem~\ref{thm:comm_contract_minmax}, $\max(\mathcal{D}_{G})\ast
v-v=\max(\mathcal{D}_{G}\ast v-v)$. Since $v$ is a looped vertex,
$\mathcal{D}_{G}\ast v=\mathcal{D}_{G\ast v}$, and thus $\max(\mathcal{D}%
_{G}\ast v-v)=\max(\mathcal{D}_{G\ast v-v})=M_{A}(G\ast v-v)$. The result
follows as $G\ast v=G^{v}$.
\end{proof}

\begin{theorem}
\label{thm:mat_contr_unloop} Suppose $v$ is an unlooped vertex of $G$.

\begin{enumerate}
\item \label{item:one} If $v$ is isolated, then $M_{A}(G)/v=M_{A}(G-v)$.

\item \label{item:two} If $w$ is an unlooped neighbor of $v$, then
$M_{A}(G)/v=M_{A}((G^{w})^{v}-v)$.

\item \label{item:three} If $w$ is a looped neighbor of $v$, then
$M_{A}(G)/v=M_{A}(((G^{v})^{w})^{v}-v)$.
\end{enumerate}
\end{theorem}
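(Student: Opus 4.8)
The plan is to reduce all three cases to Theorem~\ref{thm:mat_contr_loop}, which already handles contraction of a \emph{looped} vertex combinatorially, together with Theorem~\ref{thm:mat_del} for deletion. The one new ingredient I need is a $\Delta$-matroid proof of part~1 of Theorem~\ref{lcmat}, namely that $M_A(G^v)=M_A(G)$ when $v$ is unlooped. This comes for free from the set-system machinery: since $v$ is unlooped we have $\mathcal{D}_{G^v}=\mathcal{D}_G\bar{\ast}v$, and $\max(D)=\max(D\bar{\ast}X)$ for every $X\subseteq V$, so $M_A(G^v)=\max(\mathcal{D}_{G^v})=\max(\mathcal{D}_G\bar{\ast}v)=\max(\mathcal{D}_G)=M_A(G)$. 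Because equal matroids have equal contractions at a common element, this identity lets me replace $G$ by a suitable local complement that renders $v$ looped and then invoke Theorem~\ref{thm:mat_contr_loop}.

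For part~1, $v$ is unlooped and isolated, so by property~(i) of the introduction $v$ is a loop of $M_A(G)$. For a matroid loop, contraction equals deletion, hence $M_A(G)/v=M_A(G)-v$; a loop is never a coloop, so Theorem~\ref{thm:mat_del} gives $M_A(G)-v=M_A(G-v)$, as required.

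For part~2, $v$ and $w$ are unlooped neighbors. The key identity gives $M_A(G)=M_A(G^w)$, hence $M_A(G)/v=M_A(G^w)/v$. Since $v$ is a neighbor of $w$ and was unlooped, local complementation at $w$ toggles the loop at $v$, so $v$ is looped in $G^w$; Theorem~\ref{thm:mat_contr_loop} then yields $M_A(G^w)/v=M_A((G^w)^v-v)$. For part~3, $v$ is unlooped and $w$ is a looped neighbor. The key identity gives $M_A(G)=M_A(G^v)$, so $M_A(G)/v=M_A(G^v)/v$. Local complementation at $v$ leaves the loop status of $v$ itself unchanged (so $v$ stays unlooped) and leaves the adjacency of $w$ to $v$ unchanged, while toggling the loop on the neighbor $w$; thus in $G^v$ the vertex $w$ is an \emph{unlooped} neighbor of the unlooped vertex $v$. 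Applying part~2 to $G^v$ gives $M_A(G^v)/v=M_A(((G^v)^w)^v-v)$, which is the claim.

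The only place requiring care, and hence the main obstacle, is the bookkeeping of how local complementation affects loop statuses and adjacency to the pivot vertex: specifically that $G^w$ makes $v$ looped in part~2, and that $G^v$ turns the looped neighbor $w$ into an unlooped neighbor while keeping $v$ unlooped in part~3. Everything else is a direct appeal to the commutation results of Section~\ref{sec:del_con_minmax} already packaged inside Theorems~\ref{thm:mat_del} and~\ref{thm:mat_contr_loop}, so the argument is entirely combinatorial and uses no matrices.
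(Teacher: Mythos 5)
Your proof is correct, and every step of the loop/adjacency bookkeeping you flag as the delicate point does check out: in part 2 the unlooped neighbor $v$ of $w$ becomes looped in $G^{w}$, and in part 3 local complementation at $v$ keeps $v$ unlooped, keeps $w$ adjacent to $v$, and unloops $w$, so part 2 applies to $G^{v}$; moreover your derivation of $M_{A}(G^{v})=M_{A}(G)$ for unlooped $v$ from $\mathcal{D}_{G^{v}}=\mathcal{D}_{G}\bar{\ast}v$ and $\max(D)=\max(D\bar{\ast}X)$ is exactly the paper's proof of part 1 of Theorem~\ref{thm:eq_matroid_lc}. Your route does differ from the paper's proof of Theorem~\ref{thm:mat_contr_unloop} itself, which stays inside the $\Delta$-matroid calculus: it writes $M_{A}(G)/v=\max(\mathcal{D}_{G})\ast v-v=\max(\mathcal{D}_{G}\ast v-v)$ via Theorem~\ref{thm:comm_contract_minmax} (using $\{v,w\}\in\mathcal{D}_{G}$ to see $v$ is not a loop of $\mathcal{D}_{G}$), then inserts $\bar{\ast}w$ (resp.\ $\bar{\ast}v\bar{\ast}w$) using $\max(D)=\max(D\bar{\ast}X)$ and commutation of vertex flips on distinct elements, so that $\mathcal{D}_{G}\ast v$ is converted into $\mathcal{D}_{G'}$ for an actual graph $G'$ before deleting $v$. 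You instead perform the dual pivot at the graph level first and contract afterward, reducing parts 2 and 3 to Theorem~\ref{thm:mat_contr_loop} plus invariance of $M_{A}$ under local complementation at unlooped vertices; this is precisely the decomposition the paper uses in its linear-algebra proof of the identical Theorem~\ref{nonloopcontr} in Section 5, transplanted to the $\Delta$-matroid setting. What your version buys is that the only new work is the loop-status bookkeeping, with no need for the paper's remarks that $G\bar{\ast}w\ast v$ and $G\bar{\ast}v\bar{\ast}w\ast v$ are defined; what the paper's Section~\ref{sec:dmat_alt_proofs_mat} version buys is that it never needs the intermediate local complements as graphs, only as set systems.
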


\begin{proof}
We first prove Result~\ref{item:one}. If $v$ is isolated and unlooped, then
$v$ is a loop of $M_{A}(G)$. Hence, $M_{A}(G)/v=M_{A}(G)-v$. Moreover, $v$ is
not a coloop of $M_{A}(G)$. The result follows now by
Theorem~\ref{thm:mat_del}.

We now prove Results~\ref{item:two} and \ref{item:three}. Let $w$ be a
neighbor of $v$. As $\{v,w\}\in\mathcal{D}_{G}$, $v$ is not a loop of
$\mathcal{D}_{G}$. Hence, $M_{A}(G)/v=\max(\mathcal{D}_{G})\ast v-v$. By
Theorem~\ref{thm:comm_contract_minmax}, $\max(\mathcal{D}_{G})\ast
v-v=\max(\mathcal{D}_{G}\ast v-v)$. Now, $\max(\mathcal{D}_{G}\ast
v-v)=\max(\mathcal{D}_{G}\ast v-v\bar{\ast}w)=\max(\mathcal{D}_{G}\bar{\ast
}w\ast v-v)$. On the one hand, if $w$ is unlooped, then it is easy to verify
that $G\bar{\ast}w\ast v$ is defined. Hence $\mathcal{D}_{G}\bar{\ast}w\ast
v=\mathcal{D}_{G\bar{\ast}w\ast v}$. Finally, $\max(\mathcal{D}_{G\bar{\ast
}w\ast v}-v)=M_{A}(G\bar{\ast}w\ast v-v)=M_{A}((G^{w})^{v}-v)$. This proves
Result~\ref{item:two}. On the other hand, if $w$ is looped, then it is easy to
verify that $G\bar{\ast}v\bar{\ast}w\ast v$ is defined. Hence $\mathcal{D}%
_{G}\bar{\ast}v\bar{\ast}w\ast v=\mathcal{D}_{G\bar{\ast}v\bar{\ast}w\ast v}$.
Finally, $\max(\mathcal{D}_{G}\bar{\ast}w\ast v-v)=\max(\mathcal{D}%
_{G\bar{\ast}v\bar{\ast}w\ast v}-v)=M_{A}(G\bar{\ast}v\bar{\ast}w\ast
v-v)=M_{A}(((G^{v})^{w})^{v}-v)$. This proves Result~\ref{item:three}.
\end{proof}

The next result is obtained from Proposition~\ref{prop:dist_char_min}.

\begin{theorem}
\label{thm:eq_matroid_lc}

\begin{enumerate}
\item \label{item2:one} If $v \in V(G)$ is unlooped, then $M_{A}(G^{v}) =
M_{A}(G)$.

\item \label{item2:two} If $v\in V(G)$ is a coloop of both $M_{A}(G)$ and
$M_{A}(G^{v})$, then $M_{A}(G^{v})=M_{A}(G)$.

\item \label{item2:three} If $v\in V(G)$ is looped and not a coloop of one of
$M_{A}(G)$, $M_{A}(G^{v})$, then $v$ is a coloop of the other and $M_{A}%
(G^{v})$ and $M_{A}(G)$ are of different ranks.
\end{enumerate}
\end{theorem}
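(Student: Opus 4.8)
The plan is to derive all three parts from Proposition~\ref{prop:dist_char_min}, together with the dictionary relating pivot, dual pivot and loop complementation of $\mathcal{D}_G$ to local complementation and loop toggling of $G$. Throughout I would use $M_A(G)=\max(\mathcal{D}_G)$, the identity $\mathcal{D}_G+v=\mathcal{D}_{G+v}$, and the facts that $\mathcal{D}_{G^v}=\mathcal{D}_G\ast v$ when $v$ is looped while $\mathcal{D}_{G^v}=\mathcal{D}_G\bar{\ast}v$ when $v$ is unlooped.

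First I would dispose of Result~\ref{item2:one} directly, without invoking Proposition~\ref{prop:dist_char_min}. Since $\max(D)=\max(D\bar{\ast}X)$ holds for every set system and every $X$, and since $v$ unlooped gives $\mathcal{D}_{G^v}=\mathcal{D}_G\bar{\ast}v$, one immediately obtains
\[
M_A(G^v)=\max(\mathcal{D}_{G^v})=\max(\mathcal{D}_G\bar{\ast}v)=\max(\mathcal{D}_G)=M_A(G).
\]

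For Results~\ref{item2:two} and \ref{item2:three} the vertex $v$ is looped, so $\mathcal{D}_{G^v}=\mathcal{D}_G\ast v$ and hence $\max(\mathcal{D}_G\ast v)=M_A(G^v)$. I would then apply Proposition~\ref{prop:dist_char_min} to $D=\mathcal{D}_G$; its hypothesis holds because $D+v=\mathcal{D}_{G+v}$ is the $\Delta$-matroid of a graph and hence is a $\Delta$-matroid. The proposition asserts that among the three matroids $\max(\mathcal{D}_G)=M_A(G)$, $\max(\mathcal{D}_G\ast v)=M_A(G^v)$ and $\max(\mathcal{D}_G+v)=M_A(G+v)$ precisely two coincide (call the common value $D_1$), while the third, $D_2$, satisfies $(D_2-v)\oplus U_{1,1}(\{v\})=D_1$ and $\nu(D_2)=\nu(D_1)+1$. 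The crucial observation to extract is a coloop dichotomy: the decomposition $D_1=(D_2-v)\oplus U_{1,1}(\{v\})$ shows $v$ is a coloop of $D_1$, whereas $\nu(D_2)=\nu(D_1)+1=\nu(D_2-v)+1$ forces $v$ not to be a coloop of $D_2$. Thus $D_2$ is precisely the one of the three matroids in which $v$ fails to be a coloop.

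The two cases now fall out by bookkeeping. For Result~\ref{item2:two}, if $v$ is a coloop of both $M_A(G)$ and $M_A(G^v)$, then neither of these can be $D_2$, so $D_2=M_A(G+v)$ and the remaining two, $M_A(G)$ and $M_A(G^v)$, both equal $D_1$; hence $M_A(G^v)=M_A(G)$. For Result~\ref{item2:three}, if $v$ is not a coloop of one of $M_A(G),M_A(G^v)$, that matroid must be $D_2$; the other member of $\{M_A(G),M_A(G^v)\}$ is then one of the two copies of $D_1$, so $v$ is a coloop of it, and $\nu(D_2)=\nu(D_1)+1$ gives that the two matroids have nullities differing by $1$, hence (sharing the ground set) ranks differing by $1$. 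The main obstacle I anticipate is not any hard computation but pinning the coloop dichotomy down precisely: one must verify that ``$v$ is not a coloop'' singles out $D_2$ uniquely, and that the hypotheses of the two cases are exactly the two mutually exclusive ways the pair $\{M_A(G),M_A(G^v)\}$ can meet the set $\{D_1,D_2\}$, so that the cases are complete and non-overlapping.
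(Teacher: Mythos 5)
Your proposal is correct and takes essentially the same route as the paper: part 1 via $\max(D)=\max(D\bar{\ast}v)$ and $\mathcal{D}_{G^{v}}=\mathcal{D}_{G}\bar{\ast}v$, and parts 2 and 3 by applying Proposition~\ref{prop:dist_char_min} to $\mathcal{D}_{G}$ with $\mathcal{D}_{G^{v}}=\mathcal{D}_{G}\ast v$ for looped $v$; your explicit ``coloop dichotomy'' just fills in what the paper leaves implicit. The one small nit is that the hypothesis of part 2 does not assume $v$ is looped, so (as the paper does) you should first dispose of the unlooped case of part 2 by appealing to part 1 before asserting $\mathcal{D}_{G^{v}}=\mathcal{D}_{G}\ast v$.
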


\begin{proof}
We first show Result~\ref{item2:one}. If $v$ is unlooped, then $G^{v}%
=G\bar{\ast}v$. Thus, $M_{A}(G)=\max(\mathcal{D}_{G})=\max(\mathcal{D}_{G}%
\bar{\ast}v)=\max(\mathcal{D}_{G\bar{\ast}v})=M_{A}(G^{v})$ and the result follows.

We now show Results~2 and 3. If $v$ is unlooped, then we are done by Result~1.
So, assume $v$ is looped. Then we have $G^{v}=G\ast v$. The result follows now
by Proposition~\ref{prop:dist_char_min}.
\end{proof}

\begin{theorem}
If $v\in V(G)$, then $M_{A}(G)-v=M_{A}(G^{v})-v$.
\end{theorem}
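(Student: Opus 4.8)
The plan is to recover this statement (which is Theorem~\ref{lcmat2}) as a short corollary of the $\Delta$-matroid machinery already set up in this section, splitting on the loop status of $v$. The unlooped case is immediate: if $v$ is unlooped then $\mathcal{D}_{G^{v}}=\mathcal{D}_{G}\bar{\ast}v$, so using the identity $\max(D)=\max(D\bar{\ast}X)$ we get $M_{A}(G^{v})=\max(\mathcal{D}_{G}\bar{\ast}v)=\max(\mathcal{D}_{G})=M_{A}(G)$ (this is Theorem~\ref{thm:eq_matroid_lc}, Result~1), and the two deletions agree trivially.

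The work is in the looped case, where $\mathcal{D}_{G^{v}}=\mathcal{D}_{G}\ast v$ and hence $M_{A}(G^{v})=\max(\mathcal{D}_{G}\ast v)$. The key idea is to introduce the third matroid that does not appear in the statement and appeal to Proposition~\ref{prop:dist_char_min}. Since $\mathcal{D}_{G}+v=\mathcal{D}_{G+v}$ is again a $\Delta$-matroid, the proposition applies to $D=\mathcal{D}_{G}$ and tells us that among $\max(\mathcal{D}_{G})=M_{A}(G)$, $\max(\mathcal{D}_{G}\ast v)=M_{A}(G^{v})$ and $\max(\mathcal{D}_{G}+v)=M_{A}(G+v)$, exactly two coincide (call the common matroid $D_{1}$) while the third, $D_{2}$, satisfies $(D_{2}-v)\oplus U_{1,1}(\{v\})=D_{1}$.

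I would then finish as follows. If $M_{A}(G)=M_{A}(G^{v})$ there is nothing to prove. Otherwise $M_{A}(G)$ and $M_{A}(G^{v})$ are distinct, and since all three matroids lie in $\{D_{1},D_{2}\}$, one of $M_{A}(G),M_{A}(G^{v})$ equals $D_{1}$ and the other equals $D_{2}$; thus $\{M_{A}(G),M_{A}(G^{v})\}=\{D_{1},D_{2}\}$. It remains to read off $D_{1}-v=D_{2}-v$ from $D_{1}=(D_{2}-v)\oplus U_{1,1}(\{v\})$: this equation exhibits $v$ as a coloop of $D_{1}$, so matroid-deleting $v$ from the right-hand side simply discards the summand $U_{1,1}(\{v\})$ and leaves $D_{1}-v=D_{2}-v$, i.e. $M_{A}(G)-v=M_{A}(G^{v})-v$.

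The hard part is conceptual rather than computational: one must notice that the two matroids in the statement cannot be compared directly after deletion (deletion does not commute with local complementation on the graph level), and that the right move is to bring in $M_{A}(G+v)$ so that Proposition~\ref{prop:dist_char_min} can supply the structural identity $(D_{2}-v)\oplus U_{1,1}(\{v\})=D_{1}$. The one place to stay careful is to keep the matroid deletion operation distinct from the set-system/$\Delta$-matroid deletion used in Theorem~\ref{thm:max_delete}; the final step invokes only the matroid fact recalled in the introduction, namely that $v$ is a coloop of $M$ precisely when $M=(M-v)\oplus U_{1,1}(\{v\})$, applied here to $D_{1}$.
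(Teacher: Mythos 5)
Your proof is correct, and it sits in the same $\Delta$-matroid framework as the paper's proof of this statement (the paper also gives a separate linear-algebra proof of the identical claim, Theorem~\ref{lcmat2}, by row operations on $\mathcal{A}(G)$). Both you and the paper split on the loop status of $v$, dispose of the unlooped case via $\mathcal{D}_{G^{v}}=\mathcal{D}_{G}\bar{\ast}v$ and $\max(D)=\max(D\bar{\ast}v)$, and ultimately rest on Proposition~\ref{prop:dist_char_min} for the looped case; where you diverge is the endgame. The paper first converts Proposition~\ref{prop:dist_char_min} into the coloop trichotomy of Theorem~\ref{thm:eq_matroid_lc} and then, in the only nontrivial subcase ($v$ a coloop of exactly one of the two matroids), identifies the common deletion concretely as the adjacency matroid of a graph via the chain $M_{A}(G)-v=M_{A}(G)/v=M_{A}(G^{v}-v)=M_{A}(G^{v})-v$, invoking Theorems~\ref{thm:mat_contr_loop} and~\ref{thm:mat_del}. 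You instead apply Proposition~\ref{prop:dist_char_min} directly to the triple $\max(\mathcal{D}_{G})$, $\max(\mathcal{D}_{G}\ast v)$, $\max(\mathcal{D}_{G}+v)$ and read $D_{1}-v=D_{2}-v$ straight off the identity $D_{1}=(D_{2}-v)\oplus U_{1,1}(\{v\})$ (note $D_{1}\neq D_{2}$ since their nullities differ, which is what makes your case analysis exhaustive). Your route is shorter and bypasses the deletion/contraction commutation theorems entirely, at the cost of not exhibiting the common deletion as $M_{A}(G^{v}-v)$, a sharper fact the paper wants anyway. Your explicit care in separating matroid deletion from set-system deletion in the last step is exactly the right point to watch, and the argument there is sound: $\nu(D_{2})=\nu(D_{1})+1$ forces $v$ to be a non-coloop of $D_{2}$, so the set-system deletion in the proposition agrees with matroid deletion, and deleting the coloop $v$ from $D_{1}=(D_{2}-v)\oplus U_{1,1}(\{v\})$ discards the free summand.
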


\begin{proof}
If $v$ is an unlooped vertex, then by Theorem~\ref{thm:eq_matroid_lc}%
.\ref{item2:one}, $M_{A}(G)=M_{A}(G^{v})$ and the equality holds. Assume now
that $v$ is a looped vertex. By Theorem~\ref{thm:eq_matroid_lc}%
.\ref{item2:three}, $v$ is a coloop of at least one of $M_{A}(G)$ and
$M_{A}(G^{v})$. If $v$ is a coloop of both $M_{A}(G)$ and $M_{A}(G^{v})$, then
the equality holds by Theorem~\ref{thm:eq_matroid_lc}.\ref{item2:two}.

We assume now without loss of generality that $v$ is a coloop of $M_{A}(G)$
and $v$ is not a coloop of $M_{A}(G^{v})$ (the other case follows by
considering graph $G:=G^{v}$). We have in this case $M_{A}(G)-v=M_{A}(G)/v$.
By Theorem~\ref{thm:mat_contr_loop}, $M_{A}(G)/v=M_{A}(G^{v}-v)$. As $v$ is
not a coloop of $M_{A}(G^{v})$, by Theorem~\ref{thm:mat_del}, $M_{A}%
(G^{v}-v)=M_{A}(G^{v})-v$ and the result follows.
\end{proof}

By the way, the interested reader will have no trouble using \cite[Theorem
15]{BH2} to prove Theorems \ref{lcmatsharp} and \ref{deldelsharp}.

It is easy to see that $\mathcal{D}_{G}\widetilde{-}v+v=\mathcal{D}_{G(v,li)}%
$. Hence we obtain the following corollary to Theorem~\ref{thm:ssil} and
Proposition \ref{prop:dist_char_min}. Part 1 follows from part 2, which is
part of Proposition \ref{new}; and part 3 includes some of the assertions of
Theorem \ref{tripart}.

\begin{theorem}
\label{new1}If $G$ is a graph with a looped vertex $v$, then the following hold.

\begin{enumerate}
\item $\nu(M_{A}(G^{v}))=\nu(M_{A}(G(v,\ell i)))$.

\item $\mathcal{B}(M_{A}(G(v,\ell i)))=\{B\in\mathcal{B}(M_{A}(G^{v}))\mid
v\in B\}$, or equivalently $M_{A}(G(v,\ell i))$ $=(M_{A}(G^{v})/v)\oplus
U_{1,1}(\{v\})$.

\item $M_{A}(G(v,\ell i))=M_{A}(G^{v})$ iff $\nu(M_{A}(G))\geq\nu(M_{A}%
(G^{v}))$.
\end{enumerate}
\end{theorem}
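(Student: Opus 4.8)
The plan is to run everything through the normal $\Delta$-matroid $\mathcal{D}_G$ and its $\max$, using the identity $\mathcal{D}_G\widetilde{-}v+v=\mathcal{D}_{G(v,\ell i)}$ noted just above together with $M_A(H)=\max(\mathcal{D}_H)$. This gives $M_A(G(v,\ell i))=\max(\mathcal{D}_G\widetilde{-}v+v)$, $M_A(G)=\max(\mathcal{D}_G)$, $M_A(G^v)=\max(\mathcal{D}_G\ast v)$ (since $v$ is looped, $\mathcal{D}_G\ast v=\mathcal{D}_{G^v}$), and $M_A(G(v))=\max(\mathcal{D}_G+v)$ (since $\mathcal{D}_G+v=\mathcal{D}_{G+v}=\mathcal{D}_{G(v)}$). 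Before invoking the earlier machinery I would check two hypotheses: first, $\mathcal{D}_G\widetilde{-}v$ is proper, which holds because $\mathcal{D}_G$ is normal, so $\varnothing\in\mathcal{D}_G\widetilde{-}v$; second, $\mathcal{D}_G+v=\mathcal{D}_{G(v)}$ is again a $\Delta$-matroid, which is automatic as it is the $\Delta$-matroid of a graph. I would also record that $v$ is not a loop of $M_A(G^v)$: $\mathcal{D}_G$ normal means $v$ is not a coloop of $\mathcal{D}_G$, hence not a loop of $\mathcal{D}_G\ast v=\mathcal{D}_{G^v}$, hence not a loop of its $\max$.

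For part 2, I would apply Theorem~\ref{thm:ssil} with $D=\mathcal{D}_G$, obtaining $M_A(G(v,\ell i))=\max(\mathcal{D}_G\widetilde{-}v+v)=(\max(\mathcal{D}_G\ast v))\widetilde{/}v=(M_A(G^v))\widetilde{/}v$. By definition of $\widetilde{/}v$ the right-hand side is the family of bases of $M_A(G^v)$ containing $v$, which is exactly $\{B\in\mathcal{B}(M_A(G^v))\mid v\in B\}$. To pass to the direct-sum form I would use that $v$ is not a loop of $M_A(G^v)$, so matroid contraction $M_A(G^v)/v$ agrees with the $\Delta$-matroid contraction and has bases $\{B\setminus\{v\}\mid v\in B\in\mathcal{B}(M_A(G^v))\}$; adjoining $v$ as a coloop reinstates $v$ in each basis, so $(M_A(G^v)/v)\oplus U_{1,1}(\{v\})$ has the same basis family as $(M_A(G^v))\widetilde{/}v$, establishing part 2. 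Part 1 is then immediate, since contraction of a non-loop and direct sum with a coloop both preserve nullity, giving $\nu(M_A(G(v,\ell i)))=\nu(M_A(G^v)/v)=\nu(M_A(G^v))$.

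For part 3 I would first observe that $M_A(G(v,\ell i))=M_A(G^v)$ if and only if $v$ is a coloop of $M_A(G^v)$: the forward direction holds because $v$ is always a coloop of $M_A(G(v,\ell i))$ (it is isolated and looped in $G(v,\ell i)$), and the backward direction because when $v$ is a coloop, deletion equals contraction, so $M_A(G^v)=(M_A(G^v)/v)\oplus U_{1,1}(\{v\})=M_A(G(v,\ell i))$ by part 2. I would then apply Proposition~\ref{prop:dist_char_min} to $D=\mathcal{D}_G$, whose three associated matroids are $M_A(G)$, $M_A(G^v)$, and $M_A(G(v))$: precisely two agree (the common matroid $D_1$, which has $v$ as a coloop), and the third, $D_2$, is distinct, fails to have $v$ as a coloop (else $D_2=(D_2-v)\oplus U_{1,1}(\{v\})=D_1$), and has $\nu(D_2)=\nu(D_1)+1$. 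Hence $v$ is a coloop of $M_A(G^v)$ exactly when $M_A(G^v)=D_1$. Splitting into cases, if $M_A(G^v)=D_2$ then the other two matroids equal $D_1$, so $\nu(M_A(G))=\nu(D_1)=\nu(M_A(G^v))-1<\nu(M_A(G^v))$; if $M_A(G^v)=D_1$ then $M_A(G)$ is $D_1$ or $D_2$, so $\nu(M_A(G))\ge\nu(M_A(G^v))$. Thus $v$ is a coloop of $M_A(G^v)$ iff $\nu(M_A(G))\ge\nu(M_A(G^v))$, which with the first observation yields part 3.

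The main obstacle I expect is purely bookkeeping: correctly identifying which of $M_A(G)$, $M_A(G^v)$, $M_A(G(v))$ plays the role of $D_1$ and which plays $D_2$ in each branch, and confirming that $v$ is not a coloop of $D_2$ so that ``$v$ is a coloop of $M_A(G^v)$'' is genuinely equivalent to ``$M_A(G^v)=D_1$''. The translation between the set-system operation $\widetilde{/}v$ and ordinary matroid contraction-plus-coloop in part 2 is the other place to be careful, but it is routine once one uses that $v$ is not a loop of $M_A(G^v)$.
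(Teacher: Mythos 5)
Your proof is correct and follows essentially the same route the paper intends: the paper presents Theorem \ref{new1} as a corollary of Theorem \ref{thm:ssil} and Proposition \ref{prop:dist_char_min} via the identity $\mathcal{D}_{G}\widetilde{-}v+v=\mathcal{D}_{G(v,\ell i)}$, and you have simply filled in the details it leaves to the reader. Your case analysis in part 3 (identifying $D_1$ as the matroid having $v$ as a coloop) is consistent with what Theorem \ref{tripart} records.
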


\section{The interlace and Tutte polynomials}

In this section we discuss the connection between the interlace polynomials of
a graph $G$, introduced by Arratia, Bollob\'{a}s and Sorkin \cite{A1, A2, A},
and the Tutte polynomials of the adjacency matroids of $G$ and its subgraphs.
(The Tutte polynomial is described by many authors; see \cite{Bo} and \cite{T}
for instance. Especially thorough accounts are given by Brylawski and Oxley
\cite{W} and Ellis-Monaghan and Merino \cite{D}.) In particular, we show that
the fundamental recursion of the two-variable interlace polynomial may be
derived from properties of the Tutte polynomial.

\begin{definition}
\label{defq}Let $G$ be a graph. Then the \emph{interlace polynomial} of $G$ is%
\begin{align*}
q(G)  &  =\sum_{S\subseteq V(G)}(x-1)^{\left\vert S\right\vert -\nu
(\mathcal{A}(G[S]))}(y-1)^{\nu(\mathcal{A}(G[S]))}\\
&  =\sum_{S\subseteq V(G)}(x-1)^{\left\vert S\right\vert }\cdot\left(
\frac{y-1}{x-1}\right)  ^{\nu(\mathcal{A}(G[S]))},
\end{align*}
where $\nu$ denotes $GF(2)$-nullity.
\end{definition}

Arratia, Bollob\'{a}s and Sorkin \cite{A} showed that $q(G)$ may also be
defined recursively:

1. If $v$ is a looped vertex of $G$ then $q(G)=q(G-v)$ $+$ $(x-1)q(G^{v}-v).$

2. If $v$ and $w$ are unlooped neighbors in $G$ then $q(G)$ $=$
$q(G-v)+q(((G^{v})^{w})^{v}-v)$ $+((x-1)^{2}-1)\cdot q(((G^{v})^{w})^{v}-v-w)$.

3. If $G$ consists solely of unlooped vertices then $q(G)=y^{\left\vert
V(G)\right\vert }$.

So far, our discussion of matroids has been focused on their circuits. Here
are two other basic definitions of matroid theory.

\begin{definition}
Let $M$ be a matroid on a set $V$. A subset $I\subseteq V$ is
\emph{independent} if $I$ contains no circuit of $M$. The \emph{rank} of a
subset $S\subseteq V$ is the cardinality of the largest independent set(s) in
$S$; it is denoted $r(S)$.
\end{definition}

All the notions of matroid theory can be equivalently defined from the
independent sets or the rank function, instead of the circuits. For instance,
Definitions \ref{del} and \ref{contr} are equivalent to: if $M$ is a matroid
on a set $V$ and $v\in V$ then the deletion $M-v$ and the contraction $M/v$
are the matroids on $V\backslash\{v\}$ with the rank functions $r_{M-v}(S)$
$=$ $r(S)$ and $r_{M/v}(S)$ $=$ $r(S\cup\{v\})-r(\{v\})$.

Recall that if $G$ is a graph, then the circuits of $M_{A}(G)$ are the minimal
nonempty subsets $S\subseteq V(G)$ such that the columns of $\mathcal{A}(G)$
corresponding to elements of $S$ are linearly dependent. It follows that the
rank in $M_{A}(G)$ of a subset $S\subseteq V(G)$ is simply the $GF(2)$-rank of
the $\left\vert V(G)\right\vert \times\left\vert S\right\vert $ submatrix of
$\mathcal{A}(G)$ obtained by removing the columns corresponding to vertices
not in $S$. This submatrix of $\mathcal{A}(G)$ is obtained from $\mathcal{A}%
(G[S])$ by adjoining rows corresponding to vertices not in $S$, so
$r(S)\geq\left\vert S\right\vert -\nu(\mathcal{A}(G[S]))$. The difference
between $r(S)$ and $\left\vert S\right\vert -\nu(\mathcal{A}(G[S]))$ varies
with $G$ and $S$, in general; however if $r(S)=r(M_{A}(G))$ then according to
the strong principal minor theorem (see \cite{K} or Theorem \ref{spmt}),
$r(S)$ $=$ $\left\vert S\right\vert -\nu(\mathcal{A}(G[S]))$.

\begin{definition}
Let $M$ be a matroid on a set $V$. Then the \emph{Tutte polynomial} of $M$ is%
\[
t(M)=\sum_{S\subseteq V}(x-1)^{r(V)-r(S)}(y-1)^{\left\vert S\right\vert
-r(S)}.
\]

\end{definition}

The equations $r_{M/v}(S)$ $=$ $r(S\cup\{v\})-r(\{v\})$ and $r_{M-v}(S)$ $=$
$r(S)$ imply that the Tutte polynomial may be calculated recursively using the
following steps.

1. If $v$ is a loop of $M$ then $t(M)$ $=$ $y\cdot t(M-v)$.

2. If $v$ is a coloop of $M$ then $t(M)$ $=$ $x\cdot t(M/v)$.

3. If $v$ is neither a loop nor a coloop, then $t(M)$ $=$ $t(M/v)+t(M-v)$.

4. $t(\varnothing)=1$.

\noindent The distinction between $M-v$ and $M/v$ in steps 1 and 2 is
traditional, but for us it is unimportant as Definitions\ \ref{del} and
\ref{contr} have $M/v$ $=$ $M-v$ for loops and coloops.

We single out the leading term of $t(M)$ (the term corresponding to $S$ $=$
$V$) for special attention.

\begin{definition}
\label{lambda}Let $M$ be a matroid on a set $V$. Then $\lambda_{M}(y)$ $=$
$(y-1)^{\left\vert V\right\vert -r(V)}$.
\end{definition}

Like $t(M)$, $\lambda_{M}$ has a recursive description derived from the
equations $r_{M/v}(S)$ $=$ $r(S\cup\{v\})-r(\{v\})$ and $r_{M-v}(S)$ $=$
$r(S)$:

\begin{proposition}
\label{recur}1. If $v$ is a loop of $M$ then $\lambda_{M}$ $=$ $(y-1)\cdot
\lambda_{M-v}$ $=$ $(y-1)\cdot\lambda_{M/v}$.

2. If $v$ is a coloop of $M$ then $\lambda_{M}$ $=$ $\lambda_{M-v}$ $=$
$\lambda_{M/v}$.

3. If $v$ is neither a loop nor a coloop, then $\lambda_{M}$ $=$
$(y-1)\cdot\lambda_{M-v}$ $=$ $\lambda_{M/v}$.

4. $\lambda_{\varnothing}=1$.
\end{proposition}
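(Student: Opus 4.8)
The plan is to compute $\lambda_{M-v}$ and $\lambda_{M/v}$ directly from the rank-function descriptions of deletion and contraction recalled just above, and then compare the three polynomials case by case. Both $M-v$ and $M/v$ have ground set $V\backslash\{v\}$ of size $|V|-1$. Since $r_{M-v}(V\backslash\{v\})=r(V\backslash\{v\})$ and $r_{M/v}(V\backslash\{v\})=r(V)-r(\{v\})$, Definition \ref{lambda} gives
\begin{align*}
\lambda_{M-v} &= (y-1)^{|V|-1-r(V\backslash\{v\})},\\
\lambda_{M/v} &= (y-1)^{|V|-1-r(V)+r(\{v\})}.
\end{align*}
These two formulas, together with $\lambda_M=(y-1)^{|V|-r(V)}$, are the only ingredients needed.

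First I would translate each case into rank data. If $v$ is a loop then $\{v\}$ is a circuit, so $r(\{v\})=0$, and a loop lies in no basis, so $r(V)=r(V\backslash\{v\})$. If $v$ is a coloop then $v$ lies in no circuit, hence in every basis, giving $r(\{v\})=1$ and $r(V)=r(V\backslash\{v\})+1$. If $v$ is neither, then $v$ is not a loop so $r(\{v\})=1$, and $v$ is not a coloop so (since adjoining one element raises the rank by at most one) $r(V)=r(V\backslash\{v\})$.

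Then I would substitute this data into the two displayed formulas and read off the exponents. In the loop case both exponents become $|V|-1-r(V)$, one less than the exponent of $\lambda_M$, yielding $\lambda_M=(y-1)\lambda_{M-v}=(y-1)\lambda_{M/v}$. In the coloop case both exponents equal $|V|-r(V)$, giving $\lambda_M=\lambda_{M-v}=\lambda_{M/v}$. In the remaining case the deletion exponent is $|V|-1-r(V)$ while the contraction exponent is $|V|-r(V)$, yielding $\lambda_M=(y-1)\lambda_{M-v}=\lambda_{M/v}$. Finally, part 4 is immediate: the empty matroid has $|V|=r(V)=0$, so $\lambda_{\varnothing}=(y-1)^{0}=1$.

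The argument is almost entirely bookkeeping; the only step requiring care is the translation of \emph{loop}, \emph{coloop}, and \emph{neither} into the values of $r(\{v\})$ and $r(V)-r(V\backslash\{v\})$. These conversions are standard consequences of the circuit-based definitions of loop and coloop given after Definition \ref{mat}, but stating them explicitly is what makes the three exponent comparisons mechanical.
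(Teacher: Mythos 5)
Your proof is correct and follows the same route the paper intends: the paper presents Proposition \ref{recur} as a direct consequence of the rank-function formulas $r_{M-v}(S)=r(S)$ and $r_{M/v}(S)=r(S\cup\{v\})-r(\{v\})$, and your argument simply carries out that computation by translating loop, coloop, and neither into the values of $r(\{v\})$ and $r(V)-r(V\backslash\{v\})$ and comparing exponents. The only place worth a slightly fuller justification is the claim that a non-coloop satisfies $r(V)=r(V\backslash\{v\})$, which needs the standard fact that an element lying in a circuit is omitted by some basis, not merely that adjoining an element raises rank by at most one.
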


If $G$ is a graph then we adopt the abbreviated notation $\lambda_{M_{A}(G)} $
$=$ $\lambda_{G}$.

\begin{corollary}
\label{recloop}1. If $v$ is an unlooped vertex of $G$ then $\lambda
_{G}=\lambda_{G^{v}}$.

2. If $v$ is a looped vertex of $G$ then\ $\lambda_{G}=\lambda_{G^{v}-v}$.

3. If $v$ is an isolated, unlooped vertex of $G$ then $\lambda_{G}%
=(y-1)\cdot\lambda_{G-v}$.

4. $\lambda_{\varnothing}=1$.
\end{corollary}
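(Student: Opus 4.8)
The plan is to treat $\lambda_G=\lambda_{M_A(G)}=(y-1)^{\left\vert V(G)\right\vert -r(V(G))}$ as a function of the matroid nullity $\left\vert V\right\vert -r(V)$, and to deduce each of the four identities from the matroid-level relations already established, together with the recursion for $\lambda$ in Proposition \ref{recur}. The one structural fact I will lean on repeatedly is obtained by reading Proposition \ref{recur} across its three cases: $\lambda_{M/v}=\lambda_{M}$ whenever $v$ is \emph{not} a loop of $M$, while $\lambda_{M}=(y-1)\cdot\lambda_{M-v}$ when $v$ \emph{is} a loop. The other input is the classification of loops of $M_{A}(G)$ recorded as property (i) of the introduction: $v$ is a loop of $M_{A}(G)$ if and only if $v$ is isolated and unlooped in $G$.

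Part 4 I would take directly from Definition \ref{lambda}, since the empty matroid has $\left\vert V\right\vert =r(V)=0$. For part 1, if $v$ is unlooped then Theorem \ref{lcmat}.1 (equivalently Theorem \ref{thm:eq_matroid_lc}.\ref{item2:one}) gives $M_{A}(G^{v})=M_{A}(G)$, and equal matroids have equal leading Tutte terms, so $\lambda_{G}=\lambda_{G^{v}}$.

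The two substantive parts are 2 and 3, and both follow the same pattern. For part 2, $v$ is looped, so by property (i) $v$ is not a loop of $M_{A}(G)$; the reading of Proposition \ref{recur} above then gives $\lambda_{M_{A}(G)/v}=\lambda_{M_{A}(G)}=\lambda_{G}$. On the other hand Theorem \ref{loopcontr} says $M_{A}(G)/v=M_{A}(G^{v}-v)$, so $\lambda_{M_{A}(G)/v}=\lambda_{G^{v}-v}$, and combining the two equalities yields $\lambda_{G}=\lambda_{G^{v}-v}$. For part 3, $v$ is isolated and unlooped, so property (i) makes $v$ a loop of $M_{A}(G)$; Proposition \ref{recur}.1 then gives $\lambda_{G}=(y-1)\cdot\lambda_{M_{A}(G)-v}$. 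Since a loop is never a coloop, Theorem \ref{deldel} applies and gives $M_{A}(G)-v=M_{A}(G-v)$, whence $\lambda_{M_{A}(G)-v}=\lambda_{G-v}$ and $\lambda_{G}=(y-1)\cdot\lambda_{G-v}$.

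I do not expect a serious obstacle here: the whole argument is a dictionary translation between the graph operations and the matroid minor operations. The only point that needs care—and the only place a slip could occur—is correctly identifying the loop/coloop status of $v$ in each case so as to select the right line of Proposition \ref{recur}: one must remember that a looped vertex of $G$ is a non-loop of $M_{A}(G)$ (so contraction preserves $\lambda$), while an isolated unlooped vertex is a loop of $M_{A}(G)$ (so deletion drops a factor of $y-1$). Both facts are exactly property (i) of the introduction, so once that classification is invoked the bookkeeping is forced.
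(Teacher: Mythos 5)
Your proof is correct and follows essentially the same route as the paper: identify the loop/non-loop status of $v$ in $M_{A}(G)$ via property (i), apply the corresponding case of Proposition \ref{recur}, and translate the minor back to a graph using Theorem \ref{lcmat}, Theorem \ref{loopcontr}, or Theorem \ref{deldel}. The only difference is cosmetic — you spell out the appeal to property (i) and the observation that a loop cannot be a coloop, which the paper leaves implicit.
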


\begin{proof}
If $v$ is an unlooped vertex of $G$ then $M_{A}(G)=M_{A}(G^{v})$ by Theorem
\ref{lcmat}. If $v$ is a looped vertex of $G$ then $v$ is not a loop of
$M_{A}(G)$, so $\lambda_{M_{A}(G)}$ $=$ $\lambda_{M_{A}(G)/v}$ $=$
$\lambda_{G^{v}-v}$ by Proposition \ref{recur} and Theorem \ref{loopcontr}. If
$v$ is an isolated, unlooped vertex of $G$ then $v$ is a loop of $M_{A}(G)$,
so $\lambda_{M_{A}(G)}$ $=$ $(y-1)\cdot\lambda_{M_{A}(G)-v}$ $=$
$(y-1)\cdot\lambda_{G-v}$ by Theorem \ref{deldel} and Proposition \ref{recur}.
\end{proof}

If $G$ is a graph then Definitions \ref{defq} and \ref{lambda} imply that%
\begin{equation}
q(G)=\sum_{S\subseteq V(G)}(x-1)^{\left\vert S\right\vert }\cdot\lambda
_{G[S]}(1+\frac{y-1}{x-1}) \label{eq1}%
\end{equation}
and hence for each $v\in V(G)$,%
\begin{equation}
q(G)-q(G-v)=\sum_{v\in S\subseteq V(G)}(x-1)^{\left\vert S\right\vert }%
\cdot\lambda_{G[S]}(1+\frac{y-1}{x-1}). \label{eq2}%
\end{equation}

Suppose $v$ is a looped vertex and $v\in S\subseteq V(G)$. Then Corollary
\ref{recloop} tells us that $\lambda_{G[S]}$ $=$ $\lambda_{G[S]^{v}-v}$.
Clearly $G[S]^{v}-v$ $=$ $G^{v}[S]-v$ $=$ $(G^{v}-v)[S\backslash\{v\}]$, so it
follows from equations (\ref{eq1}) and (\ref{eq2}) that%
\begin{align*}
q(G)-q(G-v)  &  =(x-1)\cdot\sum_{v\in S\subseteq V(G)}(x-1)^{\left\vert
S\right\vert -1}\cdot\lambda_{G^{v}[S]-v}(1+\frac{y-1}{x-1})\\
&  =(x-1)\cdot\sum_{S\subseteq V(G)\backslash\{v\}}(x-1)^{\left\vert
S\right\vert }\cdot\lambda_{(G^{v}-v)[S]}(1+\frac{y-1}{x-1})\\
&  =(x-1)\cdot q(G^{v}-v).
\end{align*}
This yields the first formula of the recursive description of\ $q$.

Suppose now that $v$ is an unlooped vertex of $G$, and $w$ is an unlooped
neighbor of $v$ in $G$. Let $H$ $=$ $(G^{v})^{w}$; then $v$ and $w$ are looped
neighbors in $H$. Equation (\ref{eq2}) tells us that%
\[
q(H^{v}-v)-q(H^{v}-v-w)=\sum_{w\in S\subseteq V(H^{v}-v)}(x-1)^{\left\vert
S\right\vert }\cdot\lambda_{H^{v}[S]}(1+\frac{y-1}{x-1}).
\]

Suppose $w\in S\subseteq V(H^{v}-v)$; obviously then $H^{v}[S]$ $=$
$H^{v}[S\cup\{v\}]-v$ $=$ $H[S\cup\{v\}]^{v}-v$. As $v$ and $w$ are both
looped in $H[S\cup\{v\}]$ $=$ $(G^{v})^{w}[S\cup\{v\}]$, Corollary
\ref{recloop} tells us that
\[
\lambda_{H^{v}[S]}=\lambda_{H[S\cup\{v\}]^{v}-v}=\lambda_{H[S\cup
\{v\}]}=\lambda_{(G^{v})^{w}[S\cup\{v\}]}=\lambda_{((G^{v})^{w}[S\cup
\{v\}])^{w}-w}.
\]
Note that $((G^{v})^{w}[S\cup\{v\}])^{w}-w$ $=$ $((G^{v})^{w})^{w}%
[S\cup\{v\}]-w$ $=$ $G^{v}[(S\backslash\{w\})\cup\{v\}]$ $=$ $G[(S\backslash
\{w\})\cup\{v\}]^{v}$. As $v$ is unlooped in $G[(S\backslash\{w\})\cup\{v\}]$,
Corollary \ref{recloop} tells us that
\[
\lambda_{((G^{v})^{w}[S\cup\{v\}])^{w}-w}=\lambda_{G[(S\backslash
\{w\})\cup\{v\}]^{v}}=\lambda_{G[(S\backslash\{w\})\cup\{v\}]}.
\]
We conclude that
\begin{align*}
&  q(H^{v}-v)-q(H^{v}-v-w)\\
&  =\sum_{w\in S\subseteq V(H-v)}(x-1)^{\left\vert S\right\vert }\cdot
\lambda_{G[(S\backslash\{w\})\cup\{v\}]}(1+\frac{y-1}{x-1})\\
&  =\sum_{v\in S\subseteq V(G-w)}(x-1)^{\left\vert S\right\vert }\cdot
\lambda_{G[S]}(1+\frac{y-1}{x-1}).
\end{align*}
Combining this with equation (\ref{eq2}), we see that%
\begin{align*}
&  q(G)-q(G-v)\\
&  =q(H^{v}-v)-q(H^{v}-v-w)+\sum_{v,w\in S\subseteq V(G)}(x-1)^{\left\vert
S\right\vert }\cdot\lambda_{G[S]}(1+\frac{y-1}{x-1}).
\end{align*}

Suppose now that $v,w\in S\subseteq V(G)$. Then Proposition \ref{recur} and
Theorem \ref{nonloopcontr} imply that
\[
\lambda_{G[S]}=\lambda_{M_{A}(G[S])/w}=\lambda_{(G[S]^{v})^{w}-w}%
=\lambda_{(G^{v})^{w}[S]-w}=\lambda_{H[S\backslash\{w\}]}.
\]
As $v$ is looped in $H$, Corollary \ref{recloop} states that
\[
\lambda_{H[S\backslash\{w\}]}=\lambda_{H[S\backslash\{w\}]^{v}-v}%
=\lambda_{H^{v}[S\backslash\{v,w\}]}.
\]
We conclude that
\begin{align*}
&  q(G)-q(G-v)\\
&  =q(H^{v}-v)-q(H^{v}-v-w)+\sum_{v,w\in S\subseteq V(G)}(x-1)^{\left\vert
S\right\vert }\cdot\lambda_{G[S]}(1+\frac{y-1}{x-1})\\
&  =q(H^{v}-v)-q(H^{v}-v-w)+\sum_{v,w\in S\subseteq V(G)}(x-1)^{\left\vert
S\right\vert }\cdot\lambda_{H^{v}[S\backslash\{v,w\}]}(1+\frac{y-1}{x-1})\\
&  =q(H^{v}-v)-q(H^{v}-v-w)+\sum_{S\subseteq V(H^{v}-v-w)}(x-1)^{\left\vert
S\right\vert +2}\cdot\lambda_{H^{v}[S]}(1+\frac{y-1}{x-1})\\
&  =q(H^{v}-v)-q(H^{v}-v-w)+(x-1)^{2}\cdot q(H^{v}-v-w)\text{.}%
\end{align*}
\noindent This yields the second formula of the recursive description of\ $q$.

In the years since Arratia, Bollob\'{a}s and Sorkin introduced the interlace
polynomials \cite{A1, A2, A}, several related graph polynomials have been
studied by other researchers \cite{AH, Ci, T4}. These related polynomials have
definitions similar to Definition \ref{defq}, as sums involving $GF(2)$%
-nullities of symmetric matrices. Consequently they have similar connections
with the leading terms of Tutte polynomials of adjacency matroids.

\bigskip

\bigskip

\end{document}